\newtheorem{thm}{Theorem}[section]
\newtheorem{lem}[thm]{Lemma}
\newtheorem{prop}[thm]{Proposition}
\theoremstyle{definition}
\newtheorem{defn}[thm]{Definition}
\theoremstyle{remark}
\newtheorem{rem}[thm]{Remark}
\numberwithin{equation}{section}
\newcommand{\R}{\mathbb R}
\newcommand{\eps}{\varepsilon}
\newcommand{\Sh}{{\mathbb S}}
\begin{document}
\title{ On Monge-Amp\'ere equations with homogenous right hand side}
\author{Panagiota  Daskalopoulos$^*$}
\address{Department of Mathematics, Columbia University, New York}
\email{pdaskalo@math.columbia.edu}
\author{Ovidiu  Savin}
\address{Department of Mathematics, Columbia University, New York}
\email{savin@math.columbia.edu}

\thanks{$*:$ Partially supported
by NSF grant 0102252}

\begin{abstract} We study the regularity and  behavior at the origin  of solutions to the two-dimensional degenerate   Monge-Amp\'ere
equation $\det D^2 u=|x|^\alpha$,   with  $\alpha>-2$. We show that
when   $\alpha>0$ solutions   admit  only two
possible behaviors near the origin, radial and non-radial, which
in turn  implies   $C^{2, \delta}$ regularity. We also show that the
radial behavior is unstable. For $\alpha<0$ we prove that
solutions  admit  only the radial behavior near the origin.
\end{abstract}
\maketitle

\section{Introduction}

We consider the degenerate two dimensional  Monge-Amp\'ere equation
\begin{equation}{\label{eq}}
\det D^2 u=|x|^\alpha,  \qquad  x\in   B_1
\end{equation}
on the unit disc   $B_1=\{ \, |x| \leq 1 \}$ of   $\R^2$ and  in the range of exponents $\alpha > -2$. Our goal is to investigate the behavior of solutions $u$  near the  origin,
where the equation becomes degenerate.

\smallskip

The study of \eqref{eq}  is motivated by the Weyl problem with nonnegative curvature,
posed in 1916 by Weyl himself: {\em Given a  Riemannian metric $g$ on the 2-sphere
$\Sh^2$ whose Gauss curvature is everywhere positive, does  there
exist a global $C^2$ isometric embedding $X: (\Sh^2, g) \to
(\R^3, ds^2)$, where $ds^2$ is the standard flat metric on $\R^3$?}

H. Lewy \cite{Le} solved the problem under the assumption that  the metric $g$
is analytic.
The solution to the Weyl problem, under the regularity assumption that $g$ has continuous
fourth order derivatives, was given in 1953 by L. Nirenberg  \cite{Ni1}.

P. Guan and Y.Y.  Li   \cite{G-L}  considered the question:  {\em
If the Gauss curvature of the metric  $g$ is nonnegative instead
of strictly positive and $g$ is smooth, is it still possible to
have a smooth isometric embedding ? }

It was shown  in  \cite{G-L} that  for any $C^4$-Riemannian metric $g$
on $\Sh^2$ with nonnegative Gaussian  curvature, there is always a $C^{1,1}$
global isometric embedding into $(\R^3, ds^2)$.

Examples   show that for some analytic metrics with positive Gauss
curvature on $\Sh^2$ except at one point, there exists only a
$C^{2,1}$ but not a $C^3$ global isometric embedding into $(\R^3,
ds^2 )$. Note that the phenomenon is global, since C.S. Lin
\cite{Lin} has shown that for any smooth 2-dimensional Riemannian
metric with nonnegative Gauss curvature there exists a smooth {\em
local} isometric embedding into $(\R^3, ds^2)$.

This leads to the following question, which was posed in \cite{G-L}: {\em  Under what conditions on a smooth metric $g$  on
$\Sh^2$ with nonnegative Gauss curvature, there is a
$C^{2,\alpha}$ global isometric embedding into $(\R^3, ds^2 )$,
for some $\alpha >0$, or even a $C^{2,1}$ ?}

The problem can be reduced to a partial differential equation of
Monge-Amp\'ere type that becomes degenerate at the points where
the Gauss curvature vanishes. It is well known that in general one
may have solutions to degenerate  Monge-Amp\'ere equations which
are at most $C^{1,1}$.

One may  consider a smooth Riemannian metric $g$  on $\Sh^2$ with
nonnegative Gauss curvature,  which has only one non-degenerate
zero. In this case, if we represent the $C^{1,1}$ embedding as a
graph, answering  the above question  amounts  to studying the
regularity at the origin of the degenerate  Monge-Amp\'ere
equation
\begin{equation}\label{dMA}
\det D^2 u = f, \qquad \mbox{on} \,\,  B_1
\end{equation}
in the case where the forcing term $f$ vanishes quadratically at
$x=0$. More precisely, it suffices to assume that $f(x) =|x|^2g(x)$,
where $g$ is a positive Lipschitz function.
 This leads to equation \eqref{eq} when $\alpha =2$.

\smallskip

In addition to the results mentioned above, degenerate equations of the form \eqref{dMA} on $\R^2$  were previously  considered by P. Guan in  \cite{PG1} in the case where $f \in C^\infty(B_1)$ and
\begin{equation}\label{eqn-guan}
 A^{-1} \, (x_1^{2l} + B \, x_2^{2m} ) \leq f(x_1,x_2) \leq  A \, (x_1^{2l} + B \, x_2^{2m} )
\end{equation}
for some constants $A >0, B \geq 0$ and positive integers $l \leq
m$. The  $C^\infty$ regularity of the solution $u$ of \eqref{dMA}
was shown in \cite{PG1}, under the additional condition that
$u_{x_2x_2} \geq C_0 >0$. It was conjectured in \cite{PG1} that
the same result must be true under the weaker condition that
$\Delta u \geq C_0 >0$. This was recently shown by P. Guan and I.
Sawyer in \cite{GS}.

\smallskip

Equation (\ref{eq}) has also an interpretation in the language of
optimal transportation with quadratic cost $c(x,y)=|x-y|^2$. In
this setting the problem consists in transporting the density
$|x|^\alpha\,  dx$ from a domain $\Omega_x$ into the uniform density
$dy$ in the domain $\Omega_y$ in such a way that we minimize the total
``transport cost", namely
$$\int_{\Omega_x}|y(x)-x|^2|x|^\alpha dx.$$
Then, by a theorem of Y. Brenier \cite{B}, the optimal map $x
\mapsto y(x)$ is given by the gradient of a solution of  the
Monge-Amp\'ere equation (\ref{eq}). The behavior of these
solutions at the origin gives information on the geometry of the
optimal map near the singularity of the measure $|x|^\alpha\, dx$.

\medskip

We will next state the results of this paper. We assume that $u$ is a solution of
equation \eqref{eq}. Then, $u$ is $C^\infty$-smooth away from the origin. The
following results describe the regularity of $u$ at the origin.
We  begin with the case when $\alpha >0$.

\begin{thm}{\label{t1}}
If $\alpha >0$, then $u\in C^{2, \delta}$ for a small $\delta$ depending
on $\alpha$.
\end{thm}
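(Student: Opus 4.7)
The plan is to reduce the regularity question at the origin to a quantitative dichotomy between two tangent profiles and then iterate. Away from $0$, the right-hand side $|x|^\alpha$ is smooth and strictly positive, so Caffarelli's interior $C^{2,\alpha}$ theory yields $D^2u\in C^\alpha$ on $B_{|x|/2}(x)$ with estimates that scale correctly with $|x|$. Consequently the entire task reduces to controlling the modulus of continuity of $D^2u$ at the origin and matching that bound with the scaled interior estimates, which is the standard route to Hölder regularity at an isolated degenerate point.

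To analyse the origin I exploit the scaling invariance of \eqref{eq}: if $u$ solves the equation, so does
\[
u_r(x):=r^{-(2+\alpha/2)}\bigl(u(rx)-L(rx)\bigr)
\]
on $B_{1/r}$, for any affine $L$. Convexity together with Alexandrov/Pogorelov-type bounds gives uniform $C^{1,1/2}_{\mathrm{loc}}$ control on $\{u_r\}$, so any blow-up limit is a global convex solution $U$ of $\det D^2U=|x|^\alpha$ on $\R^2$ with polynomial growth. The abstract announces the key classification, which I would take as the essential input from elsewhere in the paper: every such $U$ is either (i) the radial profile $U_{\mathrm{rad}}(x)=c_\alpha|x|^{2+\alpha/2}$ modulo rotation and affine terms, or (ii) a non-radial profile whose second order jet at $0$ is a rank-one quadratic form.

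The heart of the proof is then an improvement-of-flatness iteration. Let $\mathcal{P}$ be the closed family of admissible tangent profiles coming from the classification. By compactness and contradiction I expect to obtain constants $\lambda\in(0,1)$ and $\delta>0$ such that whenever $u_r$ is within $\eps$ of some $P\in\mathcal{P}$ in $L^\infty(B_1)$, there is $P'\in\mathcal{P}$ of the same type with $u_{\lambda r}$ within $\lambda^\delta\eps$ of $P'$. Iterating this dyadic improvement both selects a unique tangent profile at $0$ and gives the geometric decay $\norm{u-P}_{L^\infty(B_r)}\le C\,r^{2+\alpha/2+\delta}$; combining this with the scaled interior $C^{2,\alpha}$ estimates on balls $B_{|x|/2}(x)$ and interpolating across scales yields $u\in C^{2,\delta}$.

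The main obstacle lies in the non-radial branch. There the tangent Hessian is only rank one, so the linearized Monge--Amp\`ere operator degenerates in a distinguished direction, and the compactness/contradiction argument must be carried out \emph{uniformly} over the family of non-radial profiles without the degenerate direction drifting through the iteration. Handling this requires a careful normalization that respects the affine invariance of the equation at each scale and a separate treatment of the two cases, with the radial case producing the sharp exponent $\delta=\alpha/2$ and the non-radial case producing a possibly smaller $\delta>0$.
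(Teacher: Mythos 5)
Your overall architecture (interior estimates away from $0$ rescaled to the origin, plus a dichotomy of tangent profiles) matches the paper's strategy, and your treatment of the radial branch is exactly what the paper does: the two-sided bound \eqref{rad} applied to $u^r(x)=r^{-2-\alpha/2}u(rx)$ forces $D^2u(0)=0$ and $|D^2u(rx)-D^2u(ry)|\le Cr^{\alpha/2}|x-y|$, giving $C^{2,\alpha/2}$. But there are two genuine gaps in the non-radial branch. First, your blow-up $u_r(x)=r^{-(2+\alpha/2)}(u(rx)-L(rx))$ is isotropic with homogeneity $2+\alpha/2$, and the non-radial profile $\frac{a}{(\alpha+2)(\alpha+1)}|x_1|^{2+\alpha}+\frac{1}{2a}x_2^2$ is \emph{not} homogeneous of that degree under isotropic scaling: the quadratic term makes $u_r$ blow up like $r^{-\alpha/2}$. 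The non-radial case is only seen through the anisotropic renormalizations $t^{-1}u(AD_t x)$ with $D_t=\mathrm{diag}(t^{1/(2+\alpha)},t^{1/2})$, and the improvement-of-flatness must be run in these affine coordinates (this is precisely Lemma \ref{l1} and Proposition \ref{p1} in the paper). As stated, your iteration's family $\mathcal{P}$ and decay rate $r^{2+\alpha/2+\delta}$ on round balls do not describe the non-radial behavior at all.

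Second, even granting the expansion \eqref{nonrad}, the step ``combine with scaled interior estimates on $B_{|x|/2}(x)$ and interpolate'' is where essentially all of the work lies, and it does not go through on round balls of radius comparable to $|x|$. Near the origin the sections of $u$ at a point $Y$ are highly eccentric (eccentricity $\sim|y_1|^{-\alpha/2}$), so Caffarelli's interior $C^{2,\beta}$ estimate, after normalizing the section, yields Hölder control of $D^2u$ only on very small balls $B(Y,\lambda^{1+\alpha})$ with $\lambda\sim|Y|^2$, with constants that must be shown not to degenerate; one must also separately prove $|D^2u(Y)-D^2u(0)|\le C\lambda^{\mu}$ using the closeness of $u$ to one-dimensional quadratic polynomials in $x_2$ on vertical segments. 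The paper's Section \ref{sec2} (Proposition \ref{p2.1} and Lemmas \ref{l2.1}--\ref{l2.4}) carries this out by a case analysis on the position of $Y$ relative to the $x_1$ and $x_2$ axes and by tracking the normalizing matrices $F$ of the sections $S_{Y,\sigma}$; your proposal names the difficulty (the rank-one tangent Hessian and the drifting degenerate direction) but supplies no mechanism for it, so the proof is incomplete precisely at its hardest point.
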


\smallskip
Theorem \ref{t1} is a consequence of Theorem \ref{t2} which shows
that there are exactly two types of behaviors near the origin.

\begin{thm}{\label{t2}}
If $\alpha>0$, and
\begin{equation}{\label{00}}
u(0)=0, \quad \nabla u(0)=0
\end{equation}
then, there exist positive constants $c(\alpha)$, $C(\alpha)$ depending
on $\alpha$ such that either $u$ has the  radial behavior
\begin{equation}{\label{rad}}
c(\alpha)|x|^{2+\frac{\alpha}{2}} \le u(x) \le
C(\alpha)|x|^{2+\frac{\alpha}{2}}
\end{equation}
or, in an appropriate system of coordinates, the  non-radial behavior
\begin{equation}{\label{nonrad}}
u(x)=\frac{a}{(\alpha+2)(\alpha+1)}|x_1|^{2+\alpha}+\frac{1}{2a}x_2^2
+O \left ( (|x_1|^{2+\alpha}+x_2^2)^{1+\delta} \right)
\end{equation}
for some $a>0$.
\end{thm}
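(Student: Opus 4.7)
The plan is to analyze $u$ through its sub-level sets $S_h := \{x : u(x) < h\}$ at the origin and to exploit the affine invariance of the Monge--Amp\`ere operator. A preliminary step is to record a universal upper barrier: the radial solution $W(x) = c_\alpha |x|^{2+\alpha/2}$, with $c_\alpha$ chosen so that $\det D^2 W = |x|^\alpha$, satisfies $W(0)=0$, $\nabla W(0)=0$, and a standard comparison with multiples of $W$ yields the upper estimate $u(x) \le C(\alpha)\,|x|^{2+\alpha/2}$ in a neighborhood of the origin. Thus the upper bound in \eqref{rad} is always valid, and the real content of the theorem concerns the shape of $S_h$. I would set up the dichotomy via the John ellipse of $S_h$: letting $\mathcal E_h$ denote the ratio of its major to minor semiaxis, either
\begin{enumerate}
\item[(A)] $\mathcal E_h$ stays bounded as $h \to 0$ (round sections); or
\item[(B)] $\mathcal E_h \to \infty$ along some sequence (elongated sections).
\end{enumerate}

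In Case (A), Alexandrov's maximum principle applied to $h - u$ on $S_h$, combined with $S_h$ being comparable to a disk of radius $r_h$, yields $r_h \asymp h^{2/(4+\alpha)}$. A second comparison from below with a translate of the radial subsolution localized inside $S_h$ then gives the matching pointwise lower bound $u(x) \ge c(\alpha)|x|^{2+\alpha/2}$, which is \eqref{rad}.

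Case (B) is the main work. Using affine invariance, let $T_h$ be the affine map sending the John ellipse of $S_h$ to the unit ball; after a rotation one can take $T_h = \mathrm{diag}(r_1(h)^{-1}, r_2(h)^{-1})$ with $r_1/r_2 \to \infty$. Setting $\tilde u_h(y) := h^{-1}\, u(r_1\, y_1,\, r_2\, y_2)$, the equation transforms to
\[
\det D^2 \tilde u_h(y) \;=\; h^{-2}\,r_1^2 r_2^2\,\bigl(r_1^2 y_1^2 + r_2^2 y_2^2\bigr)^{\alpha/2},
\]
and Alexandrov's estimate together with the upper radial barrier force the balance $h^2 \asymp r_1^{2+\alpha}r_2^2$. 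Since $r_2/r_1 \to 0$, the limit along a subsequence is a convex global solution $\tilde u_0$ of $\det D^2 \tilde u_0 = |y_1|^\alpha$ on $\R^2$, with $\tilde u_0(0)=\nabla \tilde u_0(0)=0$ and $\{\tilde u_0 < 1\}$ comparable to the unit ball.

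The key step is then a Liouville-type classification: every such $\tilde u_0$ is necessarily $P_a(y) := \frac{a}{(\alpha+2)(\alpha+1)}\,|y_1|^{2+\alpha} + \frac{1}{2a}\,y_2^2$ for some $a>0$. Since the limit equation is $y_2$-translation invariant, the linearized operator has $y_2$-independent coefficients, and strict convexity in $y_2$ (from the normalization) together with a Liouville theorem for the linearization forces $\tilde u_0$ to be quadratic in $y_2$; separability $\tilde u_0(y) = f(y_1) + g(y_1)y_2^2$ then reduces the equation to the ODE system $2g f'' = |y_1|^\alpha$ and $gg'' = 2(g')^2$, whose only globally convex positive solutions have $g$ constant and $f$ of the claimed form. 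To upgrade this subsequential convergence to the pointwise expansion \eqref{nonrad}, I would run a dyadic iteration in the anisotropic scales: if at some scale $\tilde u_h$ is $\eps$-close to $P_a$ in $C^0$, then at a geometrically smaller scale it is $\theta\eps$-close with $\theta<1$, obtained via a linearized Schauder estimate around $P_a$ exploiting its nondegeneracy in $y_2$. Geometric decay yields the error $O\bigl((|x_1|^{2+\alpha}+x_2^2)^{1+\delta}\bigr)$. The main obstacle I foresee is precisely this iteration: one must ensure that both the preferred direction and the parameter $a$ do not drift across scales, which demands the rigidity supplied by the Liouville classification together with a careful renormalization at each step.
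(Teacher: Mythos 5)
Your overall architecture --- dichotomy on the eccentricity of the sections $S_h$, affine renormalization so that the limit equation becomes $\det D^2 v=|y_1|^\alpha$, and a scale-by-scale improvement-of-flatness iteration to produce the expansion \eqref{nonrad} --- is the same as the paper's. But two steps as you have set them up contain genuine gaps. First, the Liouville step: the renormalized functions $\tilde u_h$ are only controlled on their unit section $\{\tilde u_h<1\}\approx B_1$; convexity and the normalization give no upper bound on $\tilde u_h$ on $B_R$ for $R>1$, so to extract a limit that is an \emph{entire} solution on $\R^2$ you would need to control the shape of $S_{Mh}$ for all $M\ge 1$ uniformly --- which is essentially the propagation-of-eccentricity statement you are trying to prove. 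Without globality the classification is simply false: solutions of $\det D^2 v=|y_1|^\alpha$ on $B_1$ with $v(0)=0$, $\nabla v(0)=0$ form an infinite-dimensional family parametrized by boundary data. Moreover, even the global classification cannot reduce to the separable ansatz $f(y_1)+g(y_1)y_2^2$: the functions $a|y_1|^{2+\alpha}+d(y_2+ey_1)^2$ are exact entire solutions with a $y_1y_2$ cross term, so the rigidity statement must carry a shear parameter, and this shear is exactly the quantity whose drift across scales you must control in the iteration.

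Second, the iteration itself --- which you correctly identify as the main obstacle --- is where all the content lies, and the proposal does not supply the mechanism for the one-step improvement. The operator obtained by linearizing at the model is the degenerate Grushin-type operator $w_{11}+c\,|y_1|^\alpha w_{22}$, and a ``Schauder estimate'' for it in the anisotropic scaling is not off-the-shelf. The paper's device is the partial Legendre transform in the $x_2$-variable: it converts the comparison solution of $\det D^2 w=c\,|x_1|^\alpha$ exactly (not just to leading order) into a solution of the linear equation $w^*_{11}+c|y_1|^\alpha w^*_{22}=0$, for which an explicit pointwise expansion up to the homogeneous profiles $y_1y_2$, $\tfrac12 y_2^2-\tfrac{1}{(\alpha+2)(\alpha+1)}|y_1|^{2+\alpha}$ with error $O((y_2^2+|y_1|^{2+\alpha})^{1+\delta})$ is proved by barrier/maximum-principle arguments (Lemma \ref{lineq}). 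That expansion is what yields the contraction of the flatness defect by the factor $t_0^{\delta}$ and simultaneously tracks the matrices $A_k$ (hence the direction and the coefficient $a$) as a convergent product. This route also bypasses the Liouville theorem entirely: one never needs to classify blow-up limits, only to compare with the explicit model at each dyadic scale. As written, your proposal asserts the conclusion of this step rather than proving it.
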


The non-radial behavior \eqref{nonrad} was first shown by P. Guan
in \cite{PG1}, under the condition that $u_{x_2x_2} \geq C_0 >0$
near the origin,  and was recently generalized in  \cite{GS} to
only assume  that $\Delta u \geq C_0 >0$.

\smallskip
The next result states that  the radial behavior is unstable.

\begin{thm}{\label{t3}}
Suppose $\alpha>0$, let $u_0$ be the radial solution to
$(\ref{eq})$,
$$u_0(x)=c_\alpha|x|^{2+\frac{\alpha}{2}}$$
and consider the Dirichlet problem
$$\det D^2u=|x|^\alpha, \quad  \quad u=u_0-\eps \cos(2 \theta) \mbox{ on
$\partial B_1$}.$$ Then $u-u(0)$ has the nonradial behavior
$(\ref{nonrad})$ for small $\eps$.
\end{thm}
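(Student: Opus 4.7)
The plan is to argue by contradiction using Theorem \ref{t2}: the symmetry of the boundary data forces $\nabla u(0)=0$, so $u - u(0)$ must exhibit either the radial behavior \eqref{rad} or the non-radial behavior \eqref{nonrad}, and I rule out the radial alternative for $\eps>0$ via a linearization of the Monge--Amp\`ere operator at $u_0$.

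First, the Dirichlet data $u_0(x) - \eps\cos(2\theta)$ is invariant under the two reflections $x_1\mapsto -x_1$ and $x_2\mapsto -x_2$, so by uniqueness for the Dirichlet problem the solution $u$ has the same symmetries; in particular $\nabla u(0)=0$ and, after subtracting the constant $u(0)$, the hypotheses \eqref{00} of Theorem \ref{t2} are satisfied. Setting $w := u - u_0$ (so that $w=-\eps\cos(2\theta)$ on $\partial B_1$) and expanding $\det D^2 u = \det D^2 u_0$ yields
\[
Lw + \det D^2 w = 0, \qquad Lw := \operatorname{tr}\bigl((\operatorname{cof} D^2 u_0)\, D^2 w\bigr).
\]
Using that $D^2 u_0$ has radial/tangential eigenvalues $c_\alpha\beta(\beta-1)|x|^{\beta-2}$ and $c_\alpha\beta|x|^{\beta-2}$ with $\beta := 2 + \alpha/2$, a direct computation gives
\[
Lw = c_\alpha\beta\,|x|^{\beta-2}\Bigl[\partial_{rr}w + (\beta-1)\bigl(\tfrac{1}{r}\partial_r w + \tfrac{1}{r^2}\partial_{\theta\theta} w\bigr)\Bigr].
\]
Separated solutions $w = r^k\cos(2\theta)$ of $Lw=0$ satisfy the indicial equation $k^2 + (\beta-2)k - 4(\beta-1)=0$, whose positive root $k_+ = \tfrac12[(2-\beta)+\sqrt{\beta^2+12\beta-12}]$ obeys $k_+ < \beta$ precisely when $(\beta-1)(\beta-2) > 0$, i.e.\ whenever $\alpha > 0$.

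Now assume for contradiction that $u-u(0)$ satisfies the radial bound \eqref{rad}. Then $w$ is of order $r^\beta$ near the origin (up to the small constant $u(0)$), and Theorem \ref{t1}, combined with the quantitative positivity of $D^2 u_0$ away from the origin, gives uniform $C^{2,\delta}$ bounds on the natural Monge--Amp\`ere rescalings of $w$. One then constructs super- and subsolutions of the full nonlinear equation of the form $u_0 \pm \eps\, r^{k_+}\cos(2\theta) \mp \sigma\, r^{k_+ + \eta}$ (with $\sigma,\eta>0$ small and chosen so that the quadratic remainder $\det D^2 w$ is dominated by the linear correction), and compares them with $u$ via the Monge--Amp\`ere comparison principle; alternatively, one blows $w$ up at dyadic scales $r_n\to 0$ and extracts a subsequential limit solving $Lw_\infty = 0$, whose $\cos(2\theta)$ component is forced by the boundary value $-\eps$ and the sign of $k_\pm$ to be a nonzero multiple of $r^{k_+}\cos(2\theta)$. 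Either way, along the ray $\theta=0$ the function $u-u(0)$ acquires a contribution of order $-\eps\, r^{k_+}$, which contradicts the lower bound in \eqref{rad} since $k_+ < \beta$. Hence only the non-radial alternative \eqref{nonrad} is available.

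The main obstacle is making the linearization rigorous: the naive ansatz $w \approx -\eps\, r^{k_+}\cos(2\theta)$ produces a quadratic Monge--Amp\`ere error of the same order as $Lw$ at the natural scale, so it cannot simply be discarded. One must either build genuine super/subsolutions of the nonlinear problem whose leading angular part is $\pm\eps\, r^{k_+}\cos(2\theta)$ and whose Monge--Amp\`ere determinant can be compared with $|x|^\alpha$ pointwise, or carry out a compactness argument at all scales using the $C^{2,\delta}$ estimates from Theorem \ref{t1}. In both routes the key analytic input is the strict inequality $k_+ < \beta$ for $\alpha > 0$, which is the reason the radial branch is unstable.
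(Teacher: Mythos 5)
Your setup is correct and matches the paper's: symmetry gives $\nabla u(0)=0$, the linearization of $\det D^2u=\det D^2u_0$ at $u_0$ is $v_{\nu\nu}+(\beta-1)v_{\tau\tau}=0$, and the relevant separated solution is $r^{\rho}\cos(2\theta)$ with $2<\rho<\beta$ exactly when $\alpha>0$. But the step ``along the ray $\theta=0$ the function $u-u(0)$ acquires a contribution of order $-\eps\, r^{k_+}$'' is the whole difficulty, and neither of your two proposed mechanisms closes it. The approximation $u-u_0\approx -\eps\, r^{\rho}\cos(2\theta)$ obtained from the linearized problem (in the paper, via the Aleksandrov maximum principle) is an estimate of the form $|v-r^{\rho}\cos 2\theta|\le C\eps^{\delta}$, uniform in $x$; since the signal $r^{\rho}$ tends to $0$, this says nothing for $|x|\lesssim \eps^{\delta/\rho}$, which is precisely the region where the contradiction with \eqref{rad} would have to occur. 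Your fix (a) fails because for $k_+<\beta$ the Hessian of $\eps\, r^{k_+}\cos(2\theta)$, of size $\eps\, r^{k_+-2}$, dominates $D^2u_0\sim r^{\beta-2}$ near the origin and is indefinite, so $u_0\pm\eps\, r^{k_+}\cos(2\theta)\mp\sigma r^{k_++\eta}$ is not convex in any neighborhood of $0$ and the Monge--Amp\`ere comparison principle cannot be applied there; restricting to an annulus $\{|x|\ge r_0\}$ reintroduces an uncontrolled inner boundary. Your fix (b) fails because the dyadic blowup needs a normalization: nothing in the argument prevents the $\cos(2\theta)$ Fourier mode of $w=u-u_0$ from decaying faster than $r^{k_+}$ (the quadratic term $\det D^2w$ couples the modes), so the subsequential limit could simply be zero and no contradiction results.

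What the paper does to bridge exactly this gap is introduce the scale-invariant quantity $J_u=(\Delta u)(r^2u_{rr})^{\gamma}$, $\gamma=2/\beta-1$, which is constant on homogeneous solutions and for which $|J_u-J_0|$ admits no interior maximum (Proposition \ref{p4.2}). Expanding $M_u=\log J_u$ in $\eps$, the first-order term vanishes identically by the linearized equation --- so the instability is invisible at the order at which your argument works --- and the second-order term equals $\eps^2r^{2(2-\beta)}[a_1(-\det D^2w)+a_2(\Delta w)^2]$ with $w=r^{\rho}\cos(2\theta)$, which is positive and homogeneous of degree $2(\rho-\beta)<0$. Evaluating on small circles and invoking the maximum principle for $J_u$ propagates the strict inequality down to the origin: $\lim_{x\to0}J_u>J_0$. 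Combined with Proposition \ref{p4.3} (under \eqref{rad}, blowups are homogeneous with $J=J_u(0)$, hence here non-radial) and Proposition \ref{p4.1} together with the period-$\pi$ symmetry forced by the boundary data, this yields the contradiction. Some monotone or maximum-principle-invariant quantity of this kind is indispensable to transfer the perturbation from the fixed scale where the linearization is valid to the origin, and it is the missing ingredient in your proposal.
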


\smallskip
Subsequences of blow up solutions satisfying
(\ref{rad}) converge to homogenous solutions, as shown next.

\begin{thm}{\label{t4}}
Under the assumptions of Theorem $\ref{t2}$, if   $u$ satisfies
$(\ref{rad})$, then for  any sequence of $r_k \to 0$ the blow up
solutions
$$r_k^{-2-\frac{\alpha}{2}} u (r_k x)$$
have a subsequence that converges uniformly on compact sets to a
homogenous solution of $(\ref{eq})$.
\end{thm}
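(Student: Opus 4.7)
The plan is to follow a standard blow-up/compactness scheme and then appeal to a rigidity statement to promote the subsequential limit to a homogeneous solution.

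First, set $u_k(x) := r_k^{-(2+\alpha/2)}\, u(r_k x)$. Since $D^2 u_k(x) = r_k^{-\alpha/2}(D^2 u)(r_k x)$, one computes $\det D^2 u_k(x) = r_k^{-\alpha}|r_k x|^\alpha = |x|^\alpha$, so each $u_k$ is a convex Alexandrov solution of \eqref{eq} on $B_{1/r_k}$ with $u_k(0)=0$ and $\nabla u_k(0)=0$. The bound \eqref{rad} is exactly scale-invariant under this rescaling and therefore passes verbatim to each $u_k$ on $B_{1/r_k}$. The resulting locally uniform $L^\infty$-bound, coupled with convexity, gives uniform interior Lipschitz estimates on every compact subset of $\R^2$; Arzel\`a--Ascoli plus a diagonal extraction then produces a subsequence converging locally uniformly to a convex $u_\infty$ still satisfying \eqref{rad}. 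Stability of the Monge-Amp\`ere measure under locally uniform convergence of convex potentials (its weak continuity) implies that $u_\infty$ is a global Alexandrov solution of \eqref{eq} on all of $\R^2$.

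The remaining decisive step is showing that $u_\infty$ is homogeneous of degree $\beta := 2+\alpha/2$. For each $\lambda>0$, the rescaled function $v_\lambda(x) := \lambda^{-\beta}\, u_\infty(\lambda x)$ is again a global convex Alexandrov solution of \eqref{eq} satisfying \eqref{rad}, and the identity
$$v_\lambda(x) = \lim_{k\to\infty}(r_k\lambda)^{-\beta}\, u(r_k\lambda\, x)$$
shows that $v_\lambda$ is itself a blow-up limit of $u$, along the modified scales $r_k\lambda\to 0$. To conclude $v_\lambda \equiv u_\infty$, I would invoke a Liouville-type rigidity for global solutions of \eqref{eq} subject to \eqref{rad}: the two-sided bound traps the sections $\{u_\infty \le t\}$ between concentric balls of comparable radii, and Alexandrov's comparison principle on large balls (applicable since $|x|^\alpha>0$ away from the origin), combined with a diagonal extraction in $\lambda$ to match boundary data, forces $v_\lambda \equiv u_\infty$. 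Here the dichotomy of Theorem \ref{t2} is useful: the non-radial profile \eqref{nonrad} grows like $|x_1|^{2+\alpha}$ along $\{x_2=0\}$, which violates the lower bound in \eqref{rad} since $2+\alpha>2+\alpha/2$, so only the radial branch is consistent with the hypotheses on $u_\infty$.

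The principal obstacle is this homogeneity step: subsequential compactness at each individual scale does not automatically upgrade to exact homogeneity, because a priori the blow-up sequences at scales $r_k$ and $r_k\lambda$ may accumulate at different limits. The mechanism closing the gap is the combination of the sharp two-sided bound \eqref{rad}, which controls the geometry of sections of $u_\infty$ uniformly in scale, with the comparison principle for \eqref{eq} away from the origin; together they pin the limit down to a single homogeneous profile.
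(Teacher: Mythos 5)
The compactness half of your argument is correct and is essentially what the paper takes for granted: the rescalings solve \eqref{eq}, inherit \eqref{rad} verbatim, and converge locally uniformly along a subsequence to a global Alexandrov solution $u_\infty$ by convexity and weak continuity of the Monge--Amp\`ere measure. The gap is in the homogeneity step, which is the entire content of the theorem. The mechanism you propose --- a Liouville-type uniqueness for global solutions of \eqref{eq} satisfying \eqref{rad}, enforced by the comparison principle --- does not exist and is in fact false: Proposition \ref{p4.1} shows that for $\alpha>0$ there are in general several distinct homogeneous solutions of \eqref{eq} (with angular periods $2\pi/k$ for the admissible $k$), all homogeneous of degree $\beta=2+\alpha/2$ and hence all satisfying two-sided bounds of the form \eqref{rad}; so the bound together with the equation cannot ``pin the limit down to a single homogeneous profile.'' Moreover the comparison principle applies to two solutions sharing boundary data on a common bounded domain, and $v_\lambda$ and $u_\infty$ share no such data; ``a diagonal extraction in $\lambda$ to match boundary data'' is not an argument. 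Note also that identifying $v_\lambda$ with $u_\infty$ for every $\lambda$ amounts to proving uniqueness of the blow-up limit, which is strictly stronger than what the theorem asserts and is not established in the paper either. Finally, your observation that the non-radial profile \eqref{nonrad} is incompatible with \eqref{rad} only rules out one alternative behavior of $u_\infty$ at its own origin; it does not make $u_\infty$ homogeneous.

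What is actually needed --- and what the paper supplies --- is a scale-invariant quantity that is constant precisely on homogeneous solutions and that can be shown to have a limit at the origin. The paper uses $J_u=(\Delta u)\,(r^2u_{rr})^{\gamma}$ with $\gamma=\frac{2}{\beta}-1$, proves a strong maximum principle for $|J_u-J_0|$ away from the origin (Proposition \ref{p4.2}, via the linear system satisfied by the third derivatives), deduces from \eqref{rad} that $\lim_{x\to 0}J_u(x)$ exists, and concludes that every blow-up limit $v$ has $J_v$ constant; constancy of $J_v$ then forces the Euler relation $x\cdot\nabla v_{ij}=(\beta-2)v_{ij}$ through the same third-derivative system, i.e.\ homogeneity (Proposition \ref{p4.3}). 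Nothing in your proposal substitutes for this monotonicity-type mechanism, so the proof is incomplete at its decisive step.
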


\smallskip

In the case $-2<\alpha<0$ solutions have only the radial
behavior.  Actually, we prove a stronger result by showing that $u$
converges to the radial solution $u_0$ in the following sense.

\begin{thm}{\label{t5}}

If $-2<\alpha<0$ and $(\ref{00})$ holds, then

$$\lim_{x \to 0} \frac{u(x)}{u_0(x)}=1.$$
\end{thm}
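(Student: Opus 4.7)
The plan is a blow-up plus classification argument. First I would establish that every solution has radial behavior: $c|x|^{p} \le u(x) \le C|x|^{p}$ near the origin, with $p := 2+\alpha/2$. The upper bound comes from Monge-Amp\'ere comparison against a multiple of $u_0$ on small balls. For the lower bound, which is where the sign of $\alpha$ really enters, the non-radial behavior \eqref{nonrad} must be excluded: for $\alpha<0$ its leading term would be $|x_1|^{2+\alpha}$ of degree $2+\alpha<2$, whose model satisfies $\det D^2 u = |x_1|^\alpha$; but this blows up along the $x_2$-axis, while the true right hand side $|x|^\alpha \sim |x_2|^\alpha$ stays bounded there. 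Quantifying this mismatch through an Alexandrov-type analysis of the sections of $u$ forces the lower radial bound and rules out the non-radial profile.

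Given these bounds, set $u_r(x):=r^{-p}u(rx)$; this satisfies the same equation and inherits $c|x|^p \le u_r(x) \le C|x|^p$ on every $B_R$, uniformly in small $r$. Caffarelli--Guti\'errez interior estimates away from $0$ together with convexity give local precompactness, so any sequence $r_k\to 0$ admits a subsequence with $u_{r_k}\to U$ locally uniformly, $U$ being a convex global solution of $\det D^2 U=|x|^\alpha$ on $\R^2$ with $U(0)=0$, $\nabla U(0)=0$, and $c|x|^p\le U(x) \le C|x|^p$.

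The heart of the argument is a Liouville step: such a $U$ must equal $u_0$. I would first show $U$ is $p$-homogeneous by iterating the blow-up---the family $U_s(x)=s^{-p}U(sx)$ is itself compact in the same class, its subsequential limits are $p$-homogeneous, and a rigidity argument propagates homogeneity back to $U$. Then, for a $p$-homogeneous convex solution $U=r^p g(\theta)$, the equation reduces to a nonlinear ODE for $g$, which after the substitution $g=w^p$ takes the Newtonian form $w''+w=K\,w^{-(2p-1)}$ with $K=1/[p^2(p-1)]$. A phase-plane analysis of the conserved energy shows that for $p<2$ (i.e.\ $\alpha<0$) the period of any bounded non-constant orbit lies in $(\pi,\pi\sqrt{2/p})$, a range containing no value $2\pi/k$ for any integer $k\ge 1$; hence the only positive $2\pi$-periodic profile is the constant $g\equiv c_\alpha$, giving $U=u_0$. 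Since every blow-up limit equals $u_0$, the full family converges: $u_r\to u_0$ locally uniformly as $r\to 0$, and evaluating at $x=r\omega$ with $|\omega|=1$ gives $u(r\omega)/u_0(r\omega)\to 1$.

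The main obstacles are: establishing the lower radial bound, where the hypothesis $\alpha<0$ must be leveraged to exclude the non-radial profile of Theorem~\ref{t2}; and the step in the Liouville argument that upgrades homogeneity of blow-up limits to homogeneity of the global solution $U$ itself.
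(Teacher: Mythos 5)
Your roadmap reproduces the paper's architecture exactly — two-sided radial bounds, blow-up compactness, homogeneity of blow-up limits, then an ODE classification showing that for $\beta=2+\alpha/2\in(1,2)$ the period of any non-constant angular profile lies strictly between $\pi$ and $2\pi$, hence never equals $2\pi/k$ (this last step matches the paper's Proposition 4.1 up to the choice of substitution). But the two steps you flag as ``obstacles'' are precisely where the proof lives, and your proposed mechanisms for them do not suffice. For the lower radial bound, the paper's actual mechanism (Proposition 3.1 via Lemmas 3.1--3.2) is a decay-of-eccentricity statement: if a section $S_t\sim A_t$ has $|A_t|$ large, then $|A_{\delta t}|\le |A_t|/2$, proved by a compactness argument in which the rescaled right-hand sides converge to $c|x_1|^\alpha$ and Pogorelov's estimate forces the limit section to shorten in the long direction. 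Your heuristic about the mismatch between $|x_1|^\alpha$ and $|x|^\alpha$ on the $x_2$-axis is not that mechanism. More seriously, for $-2<\alpha\le -1$ the measure $|x|^\alpha\,dx$ is \emph{not} doubling with respect to ellipsoids, so the Caffarelli section theory underlying any ``Alexandrov-type analysis of the sections'' is unavailable; the paper must pass to sections $T_t$ centered at their center of mass, and the limiting Monge--Amp\`ere measure in the compactness argument becomes the singular measure $c\,|x_2|^{1+\alpha}d\mathcal{H}^1_{\{x_1=0\}}$, which requires a separate strict-convexity lemma (Lemma 3.3). None of this is addressed in your sketch.

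The second gap is the Liouville step. ``Iterating the blow-up'' can only show that blow-ups \emph{of} a blow-up limit $U$ are homogeneous; it gives no way to propagate homogeneity back to $U$ itself, and you name no rigidity mechanism. The paper's device is the scale-invariant quantity $J_u=(\Delta u)\,(r^2u_{rr})^{2/\beta-1}$, which is constant on every local homogeneous solution and for which $|J_u-J_0|$ obeys a strong maximum principle away from $\{J_u=J_0\}$ (Proposition 4.2 — itself nontrivial, requiring the invertibility of a $4\times 4$ linear system for $D^3u$). This yields existence of $\lim_{x\to 0}J_u(x)$ and constancy of $J$ on each blow-up limit, hence homogeneity of the limit (Proposition 4.3), after which Proposition 4.1 identifies the limit with $u_0$ and the full convergence $u/u_0\to 1$ follows. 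Without this quantity, or an equivalent monotonicity/rigidity functional, your argument does not close.
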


\medskip

Our results are based on the following argument: assume that a
section of $u$, say $\{u<1\}$, is ``much longer" in the $x_1$
direction compared to the $x_2$ direction. If $v$ is an affine
rescaling of $u$ so that $\{v<1\}$ is comparable to a ball, then
$v$ is an approximate solution of
$$\det D^2v(x) \approx c|x_1|^\alpha.$$
Hence, the geometry of small sections of solutions of  this new
equation provides information on the behavior of the small
sections of $u$. For example,  if the sections of $v$ are ``much
longer" in the $x_1$ direction (case $\alpha>0$) then the
corresponding sections of $u$ degenerate more and more in this
direction, producing the non-radial behavior \eqref{nonrad}. If
the sections of $v$ are longer in the $x_2$ direction (case
$\alpha<0$) then the sections of $u$ tend to become round and we
end up with a radial behavior near the origin.

\medskip

We close the introduction with the following remarks.

\begin{rem}
From the proofs one can see that the theorems above, with the
exception of the instability result, are still valid for the
equation with more general right hand side
$$\det D^2u=|x|^\alpha g(x)$$
with $g \in C^\delta(B_1)$, $g>0$.

\end{rem}

\begin{rem}  \textit{i.} We will show in the proof of Theorem \ref{t1}  that solutions of \eqref{eq},  with $\alpha >0$,  which satisfy  the radial behavior \eqref{rad} at the origin are of class $C^{2,\frac \alpha 2}$.
 \textit{ii.} Theorems \ref{t1},  \ref{t2} and the results of Guan in \cite{PG1}
and Guan and Sawyer in \cite{GS} imply that solutions of  \eqref{eq},
with $\alpha$ a positive integer,  which satisfy  the non-radial behavior \eqref{nonrad} at the origin are $C^\infty$-smooth.

\end{rem}

\begin{rem}
Equations of the form
\begin{equation}\label{eqn-gw}
\det D^2 w=|\nabla w|^{\beta}, \qquad  \beta = - \alpha
\end{equation}
for which the set $\{\nabla w =0\}$ is compactly included in the domain of
definition,
can be reduced to (\ref{eq}) by defining $u$ to be the Legendre transform
of $w$. Hence, Theorem \ref{t5} establishes the
sharp regularity of solutions $w$ of equation \eqref{eqn-gw} when $0 < \beta < 2.$
\end{rem}


\medskip
The paper is organized as follows. In Section \ref{sec0} we introduce
tools and notation to be used later in the paper. In Section \ref{sec1} we prove
Theorem \ref{t2}. In Section \ref{sec3} we establish the radial behavior of solutions
when $-2 < \alpha<0$, showing Theorem \ref{t5}. In Section \ref{sec4} we investigate
homogenous solutions and give the proof of Theorem \ref{t4}.  In Section \ref{sec5} we prove Theorem \ref{t3}.  Finally, in
Section \ref{sec2} we show that Theorem \ref{t2} implies Theorem \ref{t1}.

\medskip

\noindent{\em Acknowledgment:} We are grateful to P. Guan and  Y.Y. Li  for introducing us to this problem and for many useful discussions.

\section{Preliminaries}{\label{sec0}}

In this section  we investigate the geometry of the sections of
$u$, namely the sets
$$S_{t,{x_0}}^u:=\{u(x) < u(x_0) + \nabla u(x_0) \cdot (x-x_0) +t \}.$$
We omit the indices $u$ and $x_0$ whenever there is no possibility
of confusion. We recall some facts about such sections.

{\em John's lemma} (c.f. Theorem 1.8.2 in \cite{Gu})
states that any bounded convex set $\Omega \subset
\mathbb{R}^n$ is balanced with
respect to its center of mass. That is, if $\Omega$ has center of
mass at the origin, there exists an ellipsoid $E$ (with center of
mass $0$) such that
$$E \subset \Omega \subset k(n)E$$
for a constant $k(n)$ depending only on the dimension $n$.

Sections $S_{t,{x_0}}^u$ of solutions to Monge-Amp\'ere equations with
doubling measure $\mu$ on the right hand side also satisfy a balanced
property with respect to $x_0$.
We recall the following definition.

\begin{defn} [Doubling measure] The measure $\mu$ is doubling with respect to ellipsoids
in $\Omega$ if there exists a constant $c>0$ such that for any
point $x_0 \in \Omega$ and any ellipsoid $x_0+E \subset \Omega$
\begin{equation}{\label{doubling}}
\mu(x_0+E)\ge c \mu \left ((x_0+2E)\cap \Omega \right).
\end{equation}
\end{defn}

The following theorem, due to L. Caffarelli \cite{Ca1} holds.

\begin{thm} [Caffarelli]{\label{caf1}}
Let $u:\Omega \to \mathbb{R}$ be a (Alexandrov) solution of
$$ \det D^2u=\mu$$
with $\mu$ a doubling measure. Then, for each $S_{t,{x_0}} \subset \Omega$
there exists a unimodular  matrix $A_t$ such that
\begin{equation}{\label{1}}
k_0^{-1}A_t B_r \subset S_{t,{x_0}}-x_0 \subset k_0 A_t B_r
\end{equation}
with
$$r=t\, ( \mu(S_{t,{x_0}}))^{-1/n}, \qquad \det A_t=1.$$
for a constant $k_0(c,n)>0$.
\end{thm}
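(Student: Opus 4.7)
The plan is to combine John's lemma, applied to the section $S_{t,x_0}$ as a bounded convex set in $\R^n$, with a \emph{centering} argument that uses the doubling property of $\mu$ to show that $x_0$ sits at a uniform distance from $\partial S_{t,x_0}$ in the appropriate affine geometry.

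First, I would subtract the supporting affine function $u(x_0)+\nabla u(x_0)\cdot(x-x_0)$ and translate so that $x_0=0$, $u(0)=0$, $\nabla u(0)=0$, and $S:=\{u<t\}$. Applying John's lemma to $S$ yields an ellipsoid $\bar z+TB_1$, centered at the center of mass $\bar z$ of $S$, with $\bar z+TB_1\subset S\subset \bar z+k(n)TB_1$. Factoring $T=sA$ with $\det A=1$ and $s=(\det T)^{1/n}$ produces an ellipsoid of the desired shape $sAB_1$ that is comparable to $S$, but centered at $\bar z$ rather than at the origin. The crux is then to show $|(sA)^{-1}\bar z|\leq C$ for some $C$ depending only on $n$ and the doubling constant, so that the ellipsoid can be re-centered at $0$ with only a universal loss.

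I would then rescale by $v(y):=u(\bar z+sAy)$, which is convex on the normalized section $\tilde S:=(sA)^{-1}(S-\bar z)$ satisfying $B_1\subset \tilde S\subset B_{k(n)}$; since $\det A=1$, $v$ solves $\det D^2 v=\tilde\mu$ with $\tilde\mu$ doubling with the same constant, and the origin corresponds to $y_0:=-(sA)^{-1}\bar z$, where $v(y_0)=0$, $\nabla v(y_0)=0$, and $v=t$ on $\partial\tilde S$. The key contradiction argument is the following: if $y_0$ were close to $\partial\tilde S$, then Alexandrov's maximum principle
$$t^n\leq C_n\,\mathrm{dist}(y_0,\partial\tilde S)\,\mathrm{diam}(\tilde S)^{n-1}\,\tilde\mu(\tilde S)$$
would force $\tilde\mu$ to have large total mass, while convexity of $v$ would concentrate that mass near $y_0$; iteratively applying the doubling property to comparable ellipsoids around $y_0$ then transfers this mass onto a region that eventually exceeds $\tilde S$, a contradiction. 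Once $|y_0|\leq C$ is established, the ellipsoid $\bar z+sAB_1$ is equivalent, up to a universal dilation, to $sAB_1$ centered at the origin, proving the sandwich $k_0^{-1}A_tB_r\subset S\subset k_0A_tB_r$ with $A_t:=A$ and $r:=s$. To identify $r$ with $t\mu(S)^{-1/n}$, I invoke the standard two-sided bound $c_n t^n\leq |S|\mu(S)\leq C_n t^n$ for Alexandrov solutions of $\det D^2 u=\mu$: since $|S|\sim s^n$, this gives $s\sim t\mu(S)^{-1/n}$.

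The main obstacle is the centering step: this is precisely where the doubling hypothesis is essential, and making the intuition ``doubling forbids boundary concentration of mass'' rigorous requires a careful Vitali-type iteration balancing Alexandrov's estimate against the doubling bound, essentially following Caffarelli's original approach.
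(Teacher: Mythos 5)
The paper does not prove this statement: it is quoted verbatim as a known theorem of Caffarelli and referred to \cite{Ca1}, so there is no internal proof to compare against. Your outline is, however, a faithful reconstruction of the standard (Caffarelli/Guti\'errez) argument, and its architecture is correct: normalize by John's lemma about the center of mass, reduce to showing $\mathrm{dist}(y_0,\partial\tilde S)\geq c$ for the image $y_0$ of $x_0$, and identify $r$ with $t\,\mu(S)^{-1/n}$ via the affine-invariant bound $|S|\,\mu(S)\sim t^n$.

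One step is misdescribed, though the conclusion survives. In the contradiction argument you say that ``convexity of $v$ would concentrate that mass near $y_0$'' and that one then ``iteratively'' applies doubling around $y_0$. The actual mechanism is different and simpler: the refined Alexandrov estimate at the minimum point gives $t^n\leq C_n\,\mathrm{dist}(y_0,\partial\tilde S)\,\tilde\mu(\tilde S)$, while the complementary bound is $\tilde\mu(\sigma\tilde S)\leq C_n(\sigma)\,t^n$ for any fixed $\sigma<1$, which follows not from mass concentration but from the elementary gradient bound $|p|\leq t/\mathrm{dist}(y,\partial\tilde S)$ for $p\in\partial v(y)$, so that $\partial v(\sigma\tilde S)\subset B_{Ct}$. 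A single application of the doubling hypothesis \eqref{doubling} to the John ellipsoid of $\tilde S$ (legitimate here, since the paper's definition allows ellipsoids about arbitrary centers) converts this into $\tilde\mu(\tilde S)\leq C t^n$, and combining the two displays forces $\mathrm{dist}(y_0,\partial\tilde S)\geq c>0$. Relatedly, in your final step the lower bound $c_nt^n\leq|S|\mu(S)$ is free (cone comparison/Alexandrov), but the upper bound $|S|\mu(S)\leq C_nt^n$ again uses doubling, not merely the Alexandrov theory; as stated, ``for Alexandrov solutions'' it would be false without the doubling hypothesis.
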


The ellipsoid $E=A_t B_r$ remains invariant if we replace $A_t$ with $A_t\, O$
with $O$ orthogonal, thus we may  assume that $A$ is triangular. If (\ref{1}) is
satisfied we write
$$S_t \sim A_t$$
and say that the eccentricity of $S_t$ is proportional to $|A_t|$.

\smallskip
The measure that appears in (\ref{eq}), namely
$$\mu:=|x|^{\alpha}\, dx$$
is clearly doubling with respect to ellipsoids for $\alpha>0$. We will
see in Section \ref{sec3} that this property is still true for $-1 <
\alpha<0$ but fails for $-2 < \alpha \le -1$.

\medskip
Next we discuss the case when the right hand side in the Monge-Amp\'ere
equation depends only on one variable, i.e
\begin{equation}\label{eqn-x1}
\det D^2 u=h(x_1).
\end{equation}
We will show in Section \ref{sec1} that such equations are
satisfied by blow up limits of solutions to $\det D^2 u =
|x|^\alpha$ at the origin, when $\alpha >0$.

These  equations remain invariant under affine transformations. Also, by
taking derivatives along the $x_2$ direction one obtains the  Pogorelov type estimate
$$u_{22}\leq C$$  in the interior of the sections of $u$.

Assume that $u$ satisfies  equation \eqref{eqn-x1}   in $B_1 \subset \R^n$, in any dimension $n \geq 2$ and  perform the following partial
Legendre transformation:
\begin{equation}\label{leg}
y_1=x_1, \quad y_i= u_i(x) \quad i\ge 2, \qquad u^*(y)=x' \cdot \nabla_{x'}u -u(x)
\end{equation}
with $x'=(x_2,...,x_n)$.
The function $u^*$ is obtained by taking the Legendre transform of
$u$ on each slice $x_1=const.$ We claim that $u^*$ (which is convex
in $y'$ and concave in $y_1$) satisfies
\begin{equation}\label{eqn-u*}
u^*_{11}+h(y_1)\, \det D^2_{y'}u^*=0.
\end{equation}
To see this we first notice that by the change of variable
$$v(x_1,x') \rightarrow u(x_1, x'+x_1\xi ')$$
$v$ satisfies the same equation as $u$ and
$$v^*(y)=u^*(y)- y_1 \, \xi' \cdot y'.$$
Thus we may assume that $D^2u$ is diagonal at $x$. Now it is easy
to check that
$$u^*_1=-u_1, \quad \nabla_{y'}u^*=x'$$
and
$$u^*_{11}=-u_{11}, \quad D^2_{y'}u^*=[D^2_{x'}u]^{-1}.$$
Hence $u^*$ satisfies \eqref{eqn-u*}.

\begin{rem}\label{rem1} The following hold:
\begin{enumerate}[$i$.]
\item The partial Legendre transform of $u^*$ is $u$, i.e. $(u^*)^*=u.$
\item The inequality $|u-v|\le \eps$ implies that $|u^*-v^*|\le \eps$ on their common domain of definition.
\item In dimension $n=2$,  the partial Legendre transform of the function $p(x_1,x_2)= a \, |x_1|^{2+\alpha} + b\, x_1x_2 + d\, x_2^2$
is given by
\begin{equation}\label{legp}
p^*(y_1,y_2)= (a \, |x_1|^{2+\alpha} + b\, x_1x_2 + d\, x_1^2)^*=-
a \, y_1^{2+\alpha} + \frac{1}{4d} (y_2-b\, y_1)^2.
\end{equation}
Notice that $p$ is a solution of the equation $\det D^2 u = c\,
|x_1|^\alpha$, for an appropriate constant $c$, and $p^*$ is a
solution of the equation $w_{11}+c\, |y_1|^\alpha \, w_{22}=0$.
\end{enumerate}
\end{rem}

\smallskip
We will restrict from now on our discussion  to   dimension  $n=2$ and the special case where $h(x_1)=|x_1|^\alpha$.

\begin{lem}{\label{lineq}}
Assume that for some $\alpha > 0$, $w$ solves the equation
$$Lw:=w_{11}+|y_1|^\alpha \, w_{22}=0 \qquad \mbox{in $B_1 \subset \R^2$}$$
with  $|w| \le 1$.
Then in $B_{1/2}$, $w$ satisfies
\begin{equation*}
\begin{split}
w(y)&=a_0+a_1\cdot y + a_2 \, y_1\, y_2 +\\
&+ a_3 \left(\frac{1}{2}\, y_2^2-\frac{1}{(\alpha+2)(\alpha+1)}\, |y_1|^{2+\alpha}\right ) + O((y_2^2+|y_1|^{2+\alpha})^{1+\delta})
\end{split}
\end{equation*}
with $|a_i|$ and $O(\cdot)$ bounded by a universal constant and
$\delta=\delta(\alpha)>0$.
\end{lem}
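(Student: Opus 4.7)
The plan is a compactness--iteration of the ``improvement of flatness'' type, exploiting the anisotropic scaling under which $L$ is invariant. If $w$ solves $Lw=0$, so does $w\circ T_\lambda$ with $T_\lambda(y_1,y_2) := (\lambda y_1,\lambda^{1+\alpha/2}y_2)$. A direct computation shows that $1$, $y_1$, $y_2$, $y_1y_2$ and
\[
q(y) := \tfrac{1}{2}y_2^2 - \tfrac{|y_1|^{2+\alpha}}{(2+\alpha)(1+\alpha)}
\]
each lie in $\ker L$ and are $T_\lambda$-homogeneous of degrees $0,\,1,\,1+\alpha/2,\,2+\alpha/2,\,2+\alpha$ respectively. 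Let $\mathcal P$ denote their linear span, and set $\rho(y) := (y_2^2+|y_1|^{2+\alpha})^{1/2}$, a $T_\lambda$-homogeneous weight of degree $1+\alpha/2$. The desired conclusion reads $w = P + O(\rho^{2(1+\delta)})$ for some $P\in\mathcal P$ with universally bounded coefficients.

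Everything reduces to an iteration lemma: there exist $\theta\in(0,1)$, $\delta>0$ and $C_0>0$ such that for every $w$ solving $Lw=0$ in the rectangle $R_1 := \{|y_1|<1,\,|y_2|<1\}$ with $\|w\|_{L^\infty(R_1)}\le 1$, some $P\in\mathcal P$ with coefficients bounded by $C_0$ satisfies
\[
\sup_{T_\theta R_1}|w-P|\le\theta^{(2+\alpha)(1+\delta)}.
\]
Applied iteratively to the rescalings $\widetilde w_k := \theta^{-k(2+\alpha)(1+\delta)}(w-P_1-\cdots-P_k)\circ T_{\theta^k}$, which again solve $Lw=0$ and are bounded by $1$, this yields a telescoping expansion of $w$ on $B_{1/2}$ of the required form, since $T_{\theta^k}R_1\to\{0\}$ and $(2+\alpha)(1+\delta)$ strictly exceeds every admissible homogeneity degree of the components of $P_k$.

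The iteration lemma is proved by compactness--contradiction. Assuming failure for all $C_0$, take a sequence $w_k$ with $\|w_k\|_\infty\le 1$ that resist approximation to the required order by any admissible $P\in\mathcal P$. Interior regularity for $L$---uniformly elliptic off $\{y_1=0\}$, together with a weighted energy bound on $\int(w_{y_1}^2+|y_1|^\alpha w_{y_2}^2)$ that forces equicontinuity across the degenerate line---produces a subsequence converging locally uniformly to $w_\infty$ with $Lw_\infty=0$ and $|w_\infty|\le 1$. The change of variables $s := \operatorname{sgn}(y_1)|y_1|^{1+\alpha/2}/(1+\alpha/2)$ transforms $L$ into the weighted Laplacian
\[
\partial_s^2 w + \partial_{y_2}^2 w + \frac{\beta}{s}\partial_s w = 0, \qquad \beta := \tfrac{\alpha}{2+\alpha},
\]
under which $T_\lambda$ becomes ordinary isotropic dilation by $\lambda^{1+\alpha/2}$. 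Separation of variables $w = R^\gamma f(\theta)$ in the polar coordinates of $(s,y_2)$ reduces the equation to a Sturm--Liouville problem for $f$, whose discrete spectrum contains the five values $0,1,1+\alpha/2,2+\alpha/2,2+\alpha$ (matched by the basic solutions) and whose next eigenvalue $\gamma^*$ satisfies $\gamma^*>2+\alpha$. This spectral gap yields an expansion $w_\infty = P_\infty + O(\rho^{\gamma^*})$ with $P_\infty\in\mathcal P$; choosing $\delta < (\gamma^*-(2+\alpha))/(2+\alpha)$ and $\theta$ small then contradicts the assumed failure.

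The main obstacle is the classification step: identifying the Sturm--Liouville spectrum and, in particular, confirming the gap $\gamma^*>2+\alpha$, while prescribing the correct transmission/boundary conditions at $s=0$ (that is, across $\{y_1=0\}$) so as to rule out spurious singular modes. Once the lower part of the spectrum is pinned down by comparison with the explicit basic solutions in $\mathcal P$, a monotonicity argument on the angular ODE delivers the gap. As an alternative route, the partial Legendre duality of Remark~\ref{rem1} translates the problem back to a Monge--Amp\`ere equation with one-variable right-hand side, where Pogorelov-type estimates and Theorem~\ref{caf1} can be applied directly to obtain the analogous expansion and then transported back through $(\cdot)^*$.
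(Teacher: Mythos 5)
Your strategy --- anisotropic improvement of flatness by compactness, with the kernel elements $1,y_1,y_2,y_1y_2,q$ as the approximating family --- is a genuinely different route from the paper's, which is far more elementary: the authors prove a Bernstein-type inequality $L(Cw^2+\varphi^2 w_2^2)\ge 0$ (with a cutoff $\varphi$ constant near $\{y_1=0\}$) to bound $w_2$, iterate it to bound all pure $y_2$-derivatives, read off $|w_{11}|\le C|y_1|^\alpha$ directly from the equation (hence bounds on $w_1$, $w_{12}$, $w_{122}$), and then obtain the expansion by Taylor's formula with integral remainder, substituting $w_{11}=-|y_1|^\alpha w_{22}$ to produce the $|y_1|^{2+\alpha}$ profile. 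Their argument yields the explicit exponent essentially for free, whereas your scheme, if completed, would identify the sharp $\delta$ as a spectral gap --- a real payoff, but at a real cost.

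The cost is that the two load-bearing steps of your proof are asserted rather than proved. First, the compactness step: a Caccioppoli bound on $\int(w_{y_1}^2+|y_1|^\alpha w_{y_2}^2)$ does not by itself give equicontinuity across $\{y_1=0\}$; you need an interior H\"older estimate for the degenerate operator, e.g.\ via your (correct) change of variables $s=\mathrm{sgn}(y_1)|y_1|^{1+\alpha/2}/(1+\alpha/2)$, which puts the equation in divergence form $\mathrm{div}(|s|^\beta\nabla w)=0$ with $|s|^\beta$ an $A_2$ weight, so that the Fabes--Kenig--Serapioni theory applies; this should be stated and used explicitly. Second, and more seriously, the entire argument hinges on the spectral gap $\gamma^*>2+\alpha$ for the angular Sturm--Liouville problem $(|\cos\phi|^\beta f')'+\mu|\cos\phi|^\beta f=0$, $\mu=\gamma(\gamma+\beta)$, together with the claim that the five explicit solutions exhaust the spectrum below that threshold (equivalently, a Liouville theorem for global solutions of subquadratic anisotropic growth). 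You name this as ``the main obstacle'' and offer only ``a monotonicity argument'' --- but without it the iteration lemma has no proof, so the argument as written is incomplete. (The gap is in fact true: the five basic solutions have $1,2,2,4,4$ nodal domains and therefore realize exactly the eigenvalues $\mu_0<\mu_1\le\mu_2<\mu_3\le\mu_4$ of the periodic problem, forcing $\mu_5>\mu_4=2(2+\beta)$; but this oscillation-theoretic count for the degenerate weight is precisely what must be written down.) Finally, your fallback of passing back through the partial Legendre transform to a Monge--Amp\`ere equation is circular in context: the paper introduces this lemma exactly in order to analyze that Monge--Amp\`ere equation, so no independent expansion is available on that side.
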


\begin{proof} First we prove that $w_2$ is bounded in the interior.
Since $Lw_2=0$, the same argument applied inductively  would  imply that the derivatives of $w$   with respect to  $y_2$ of any order  are bounded in the interior.

To establish the bound on $w_2$, we show that
\begin{equation}{\label{w2bd}}
L(C w^2+ \varphi^2w_2^2) \ge 0
\end{equation}
for a smooth cutoff function $\varphi$, to be made precise later. Indeed, a direct computation shows that
$$L(w^2)=2\, (w_1^2+|y_1|^\alpha w_2^2)$$
and
\begin{equation*}
\begin{split}
L(\varphi^2&w_2^2)=L(\varphi^2)w_2^2+\varphi^2L(w_2^2)+
2(\varphi^2)_1(w_2^2)_1 + 2|y_1|^\alpha(\varphi^2)_2(w_2^2)_2\\
&= L(\varphi^2)\, w_2^2+2\varphi^2\, (w_{21}^2+|y_1|^\alpha
w_{22}^2)+8(\varphi_1w_2)(\varphi
w_{21})+8|y_1|^\alpha(\varphi_2w_2)(\varphi w_{22})
\end{split}
\end{equation*}
hence
\begin{equation*}
\begin{split}
L(C w^2+ \varphi^2w_2^2) \geq 2C\, &|y_1|^\alpha w_2^2 +
2\varphi^2\, (w_{21}^2+|y_1|^\alpha w_{22}^2) \\&
+L(\varphi^2)w_2^2+8(\varphi_1w_2)(\varphi
w_{21})+8|y_1|^\alpha(\varphi_2w_2)(\varphi w_{22}).
\end{split}
\end{equation*}
By choosing the cutoff function  $\varphi$ such that $\varphi_1=0$
for $|y_1| \le 1/4$, then
$$L(\varphi^2) \geq
-C_1|y_1|^\alpha, \quad |\varphi_1 w_2|\le C_1|y_1|^{\alpha
/2}|w_2|$$ and we obtain (\ref{w2bd}) if $C$ is large. Therefore
$w_2$ is bounded in the interior by the maximum principle.

The equation $w_{11}+|y_1|^\alpha \, w_{22}=0$  and the bound  $|w_{22}| \leq C$  imply
the bound
$$|w_{11}| \le C\, |y_1|^\alpha.$$
Thus $w_1$ is bounded. The same estimates as above show that
 $w_{12}$, $w_{122}$ are bounded as well. By Taylor's formula, namely
$$f(t)=f(0)+f'(0)\, t+\int_0^t (t-s)\, f''(s)\, ds$$
and the equation $Lw=0$,  we conclude that
$$w(y_1,0)=w(0)+w_1(0)\, y_1-\frac{w_{22}(0)}{(\alpha+2)(\alpha+1)}\, y_1^{2+\alpha}+O(|y_1|^{3+\alpha}),$$

$$w(y_1,y_2)=w(y_1,0)+w_2(y_1,0)\, y_2+\frac{w_{22}(0)}{2}\, y_2^2+O(|y_2|^3+|y_1y_2^2|),$$
and
$$w_2(y_1,0)=w_2(0)+w_{12}(0)\, y_1+O(|y_1|^{2+\alpha})$$
from which  the lemma follows.

\end{proof}


%

\smallskip

\noindent{\em Notation:} By universal constants we understand positive constants that may also depend  on the exponent $\alpha$. Also,  when there is no
possibility of confusion we use the letters $c$, $C$
for various universal constants that change from line to line.

\section{Proof of Theorem \ref{t2}}{\label{sec1}}

Throughout this section we assume that $\alpha >0$, that $u$ satisfies
$$u(0)=0, \quad \nabla u (0)=0 $$
and we simply write $S_t$ for the section $S^u_{t,0}$.



Let
$$\Gamma:=\{\, |x_1|^{2+\alpha}+x_2^2 < 1\, \}$$
be the $1$ section of $|x_1|^{2+\alpha}+x_2^2$ at $0$.
If a set
$\Omega$ satisfies
$$ (1-\theta)\Gamma \subset \Omega \subset (1+\theta) \Gamma$$
we write
$$\Omega \in \Gamma \pm \theta.$$

The following approximation lemma constitutes the basic step in the proof of
Theorem \ref{t2}.

\begin{lem}{\label{l1}}
Assume that $u$ in the section $S_1$ satisfies
\begin{equation}\label{eqn-cf}
 \det D^2 u = c \, f(x), \qquad |f(x)- |x_1|^\alpha| \le \varepsilon
 \end{equation}
and
\begin{equation}\label{eqn-pm1}
S_1 \in \Gamma \pm \theta
\end{equation}
with $\eps \le \eps_0$ and  $ \eps ^ {1/8} \le \theta$, $\theta
<1$ small.  Then, for some small universal $t_0$, we have
$$S_{t_0} \in A \, D_{t_0}(\Gamma \pm \theta \, t_0^\delta)$$
where
$$A:=\begin{pmatrix}
  a_{11} & 0 \\
  a_{21} & a_{22}
\end{pmatrix} , \quad  D_{t_0}:=\begin{pmatrix}
  t_0^\frac{1}{2+\alpha} & 0 \\
  0 & t_0^{\frac{1}{2}}
\end{pmatrix}
$$
and
$$
|A-I| \le C \theta, \qquad \mbox{$C$ universal.}$$
Moreover, the constant $c$ in
\eqref{eqn-cf} satisfies \begin{equation}{\label{1.4}}
|c-2(1+\alpha)(2+\alpha)|\le C \theta.
\end{equation}

\end{lem}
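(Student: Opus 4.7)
The plan is to execute the classical compactness--linearization argument in the style of Caffarelli's improvement of flatness, using Lemma~\ref{lineq} to handle the first-order perturbation. The reference object is the polynomial $p(x):=|x_1|^{2+\alpha}+x_2^2$, whose unit section is exactly $\Gamma$ and which satisfies $\det D^2 p=c_p|x_1|^\alpha$ with $c_p:=2(\alpha+2)(\alpha+1)$.

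First I would prove \eqref{1.4} by an integral identity. Integrating the equation over $S_1$,
$$c\int_{S_1}f\,dx=\int_{S_1}\det D^2 u\,dx=|\nabla u(S_1)|.$$
The left side equals $c\int_\Gamma|x_1|^\alpha\,dx+O(c(\theta+\eps))$ thanks to $S_1\in\Gamma\pm\theta$ and $|f-|x_1|^\alpha|\le\eps$. For the right side, stability for Monge--Amp\`ere (ABP applied to $u$ against $p$ on nearly identical domains with $O(\theta)$ boundary mismatch) gives $|\nabla u(S_1)|=|\nabla p(\Gamma)|+O(\theta)=c_p\int_\Gamma|x_1|^\alpha\,dx+O(\theta)$, so $|c-c_p|\le C\theta$.

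For the section claim I would argue by contradiction: take sequences $(u_n,c_n,f_n)$ satisfying the hypotheses with $\theta_n\to 0$ and $\eps_n\le\theta_n^8$, for which no admissible $A_n$ places $S_{t_0}^{u_n}$ inside $A_n D_{t_0}(\Gamma\pm\theta_n t_0^\delta)$, with $t_0,\delta$ universal to be fixed. Choose polynomials $p_n(x):=a_n(|x_1|^{2+\alpha}+x_2^2)$ with $a_n:=\sqrt{c_n/c_p}=1+O(\theta_n)$, so that $\det D^2 p_n=c_n|x_1|^\alpha$ \emph{exactly}, and hence $\det D^2 u_n-\det D^2 p_n=c_n(f_n-|x_1|^\alpha)=O(\eps_n)$. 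Combined with $S_1^{u_n}\in\Gamma\pm\theta_n$, Monge--Amp\`ere stability produces $\|u_n-p_n\|_\infty\le C\theta_n$ on the common section. Setting $w_n:=(u_n-p_n)/\theta_n$ and using the cofactor identity
$$\det D^2 u_n-\det D^2 p_n=\int_0^1\operatorname{cof}\bigl(sD^2u_n+(1-s)D^2p_n\bigr)^{ij}\,ds\,(u_n-p_n)_{ij},$$
I obtain $L_n w_n=O(\eps_n/\theta_n)=O(\theta_n^7)$. By Caffarelli's interior $C^2$ regularity for Monge--Amp\`ere away from the degenerate line $\{x_1=0\}$, the coefficients of $L_n$ converge to those of $L_p\phi:=2\phi_{11}+(\alpha+2)(\alpha+1)|x_1|^\alpha\phi_{22}$. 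After passing to a subsequence, $w_n\to w_\infty$ locally uniformly, with $L_p w_\infty=0$, $w_\infty(0)=0$, and $\nabla w_\infty(0)=0$ (the last two forced by $u_n(0)=\nabla u_n(0)=0=p_n(0)=\nabla p_n(0)$). A trivial change of variables converts $L_p$ to the operator of Lemma~\ref{lineq}; that lemma then yields
$$w_\infty(x)=a_2 x_1 x_2+a_3\Bigl(\tfrac{1}{2}x_2^2-\tfrac{|x_1|^{2+\alpha}}{(\alpha+2)(\alpha+1)}\Bigr)+O\bigl((x_2^2+|x_1|^{2+\alpha})^{1+\delta}\bigr)$$
with universal $a_2,a_3$ and $\delta=\delta(\alpha)>0$.

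Finally I would read off the section geometry. On the relevant scale $u_n=P_n+O\bigl(\theta_n(x_2^2+|x_1|^{2+\alpha})^{1+\delta}\bigr)$ where $P_n(x)=a'|x_1|^{2+\alpha}+b'x_1 x_2+d'x_2^2$ with $a',d'=1+O(\theta_n)$ and $b'=O(\theta_n)$. The level set $\{P_n<t_0\}$ coincides (modulo an $O(\theta_n^2)x_1^2$ term, lower order at scale $t_0$) with $A_n D_{t_0}\Gamma$ for a lower-triangular $A_n$ with $|A_n-I|\le C\theta_n$: explicitly, $A_n^{-1}$ is the composition of a diagonal rescaling $\mathrm{diag}((a')^{1/(2+\alpha)},\sqrt{d'})$ with the shear $x_2\mapsto x_2+(b'/(2d'))x_1$. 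The residual error $O(\theta_n t_0^{1+\delta})$ in $u_n-P_n$, divided by $|\nabla u_n|\sim t_0^{1/2}$ on $\partial S_{t_0}^{u_n}$ and pulled back through $D_{t_0}^{-1}$, produces a Hausdorff perturbation of order $\theta_n t_0^\delta$ relative to $\Gamma$. Fixing $t_0$ small so that all universal constants are absorbed into a slightly smaller $\delta$ contradicts the assumption. The main obstacle throughout is the degenerate line $\{x_1=0\}$: both the stability $\|u_n-p_n\|_\infty\le C\theta_n$ and the convergence $L_n\to L_p$ need uniform estimates against the unbounded weight $|x_1|^\alpha$, and it is precisely to keep the limit equation for $w_\infty$ genuinely homogeneous that $p_n$ is chosen so that $\det D^2 p_n=c_n|x_1|^\alpha$ exactly---any $\theta_n$-level mismatch in the right-hand side would leave an $|x_1|^\alpha$-inhomogeneity obstructing the direct use of Lemma~\ref{lineq}.
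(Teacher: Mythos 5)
Your overall scheme (perturb off the model polynomial, linearize, invoke Lemma~\ref{lineq}, read off the geometry of $S_{t_0}$) is the right one, but you implement it by compactness plus the cofactor linearization, whereas the paper runs a direct quantitative argument through the partial Legendre transform, and your version has a genuine gap at the most delicate point: the constant and linear coefficients of the limiting profile. You claim $\nabla w_\infty(0)=0$ is ``forced by'' $\nabla u_n(0)=\nabla p_n(0)=0$. It is not: $w_n=(u_n-p_n)/\theta_n$ is a difference of convex functions, not convex, and uniform convergence does not transmit the pointwise condition $\nabla w_n(0)=0$ to the limit (there is no uniform modulus for $\nabla w_n$ near the origin, precisely because there are no uniform second-derivative bounds across the degenerate line $\{x_1=0\}$). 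This is not a technicality you can wave away: if $w_\infty$ carried a linear term $a_{1,2}\,y_2$, then $u_n$ would contain $\theta_n a_{1,2}x_2$, the section $\{u_n<t_0\}$ (taken at the origin with zero slope, as the statement requires) would be displaced by $\sim\theta_n$ in $x_2$, and after the $D_{t_0}^{-1}$ normalization this is an error of order $\theta_n t_0^{-1/2}$, which overwhelms the claimed $\theta_n t_0^{\delta}$. The paper kills exactly these coefficients quantitatively: it shows $|a|\le C\eps^{1/2}$, $|b_i|\le C\eps^{1/4}$ using $|w^*-u^*|\le C\sqrt\eps$ together with the sign conditions $u^*(0,y_2)\ge0$, $u^*(y_1,0)\le0$ coming from the convexity in $y_2$ and concavity in $y_1$ of the partial Legendre transform of $u$ itself --- this is where the hypothesis $\eps^{1/8}\le\theta$ is actually used, and your proof never uses it. A pure compactness argument cannot recover this, because the needed bound is in terms of $\eps$, which is much smaller than $\theta$.

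A second, related gap is the limit equation across $\{x_1=0\}$. Your operators $L_n$ have coefficients $\int_0^1\operatorname{cof}(sD^2u_n+(1-s)D^2p_n)\,ds$, which you control only where interior $C^2$ estimates for Monge--Amp\`ere hold, i.e.\ away from the degenerate line; so you obtain $L_pw_\infty=0$ only in $\{x_1\ne0\}$, while Lemma~\ref{lineq} needs the equation across the line (its proof differentiates in $y_2$ and uses the equation on $\{y_1=0\}$), and removability of the line for $\partial_{11}+|y_1|^\alpha\partial_{22}$ is not free. The paper sidesteps both problems at once: $w^*$ and $v^*$ solve the \emph{same} linear equation \emph{exactly} (the partial Legendre transform converts $\det D^2w=c|x_1|^\alpha$ into $w^*_{11}+c|y_1|^\alpha w^*_{22}=0$ with no linearization error), so Lemma~\ref{lineq} applies directly to $w^*-v^*$ with the quantitative bound $C\theta$. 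Your first step is also weaker than needed as stated: the identity $c\int_{S_1}f=|\nabla u(S_1)|$ is fine, but $|\nabla u(S_1)|=|\nabla p(\Gamma)|+O(\theta)$ requires a quantitative comparison of subdifferential images that you do not supply; the paper instead gets \eqref{1.4} immediately by observing that $\{v<1\}$ is an exact dilate of $\Gamma$ by the factor $(2(1+\alpha)(2+\alpha)/c)^{1/2(2+\alpha)}$-type scaling and trapping it in $\Gamma\pm2\theta$ via the barriers $v\pm\sqrt{c\eps}|x|^2$.
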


\begin{proof}
We consider the solution
\begin{equation}{\label{1.1}}
v:=\frac{c^{1/2}}{[2(1+\alpha)(2+\alpha)]^{1/2}}(|x_1|^{2+\alpha}+x_2^2)
\end{equation}
of the equation
$$\det D^2 v =c \,  |x_1|^\alpha$$
and compute that
\begin{equation}{\label{1.2}}
\det D^2 (v+ \sqrt{c \eps}\,  |x|^2) > c \, (|x_1|^\alpha + \eps) \ge
\det D^2 u
\end{equation}
and
\begin{equation}{\label{1.2'}}
 \det D^2 (u + \sqrt{c \eps}\,  |x|^2) > c \, (f(x) + \eps) \ge \det D^2v
\end{equation}
because $|f(x)- |x_1|^\alpha| \le \varepsilon$,  by assumption.

We first notice that the assumption \eqref{eqn-pm1}  implies  that the constant
$c$ in equation \eqref{eqn-cf} is bounded from above by a universal constant, if $\eps_0$ is small. This can be easily seen from equation \eqref{1.2'} which,  with the aid of the
maximum principle,  implies that $u+ \sqrt{c \eps}\,  |x|^2 \geq v$, on $\{u=1\}$
(notice that both $v$ and $w=u+ \sqrt{c \eps}\,  |x|^2$ satisfy $v(0)=w(0)=0$
and $\nabla w(0)=\nabla v(0) =0$). Since $\{u=1\} \in \Gamma \pm \theta$,
this readily gives a bound on $c$,  if we  assume that $ \theta$ is  small.

We will next show that
\begin{equation}{\label{1.3}}
\{v<1\}\in \Gamma \pm 2 \theta
\end{equation}
which implies the bound \eqref{1.4}.
Indeed, if
$$\{v<1\} \subset (1- 2 \theta)\, \Gamma$$
\noindent then $v> u + \tilde c\,  \theta\,  |x|^2$ on $\{u=1\}$, for a universal $\tilde c$,
thus
$$v> u + \sqrt{c \eps}\, |x|^2, \qquad \mbox{on} \,\, \{u=1\}$$  since, by the assumptions of the lemma,  $\sqrt{\eps}  < \eps^{1/8} \leq\theta$ and
$\eps \leq \eps_0$, with $\eps_0$ sufficiently small.
We conclude
from the maximum principle (see (\ref{1.2})), that $v> u + \sqrt{c \eps}\,
|x|^2$ in $S_1$. This is a  contradiction,  since $u(0)=v(0)=0$.
If
\
$$(1+ 2 \theta)S_1 \subset \{v<1\}$$
\
then similarly we obtain $v+\sqrt{c \eps} |x|^2<u$ in $S_1$,
a contradiction.

Let $w$ be the solution of the problem
$$ \det D^2w=c\, x_1^2, \quad \mbox{in} \,\, S_1,\qquad w=u \quad \mbox{on $\partial S_1$}.$$
By the maximum principle
$$w +\sqrt{c \eps}\, (|x|^2-2 ) \le u \le w - \sqrt{c \eps}\, (|x|^2-2)$$
thus
$$|w-u| \le C \sqrt  \eps.$$
Also from (\ref{1.3}) we obtain
$$|w-v| \le C \theta.$$
Hence, by Remark \ref{rem1}, the corresponding partial Legendre
transforms defined in Section \ref{sec0} satisfy in $B_{1/2}$
\begin{equation}{\label{1.5}}
|w^*-v^*|\le C \theta
\end{equation}
\begin{equation}{\label{1.6}}
 |w^*-u^*|\le  C \sqrt \eps, \quad u^*(0)=0, \quad \nabla u^*(0)=0
\end{equation}
and $w^*$ and $v^*$ solve the same linear equation
$$w^*_{11}+c\, |y_1|^\alpha w^*_{22}=0.$$
Using Lemma \ref{lineq} for the difference $w^*-v^*$ together with
\eqref{legp}, (\ref{1.1}), (\ref{1.4}) and (\ref{1.5}), yields to
\begin{equation}\label{eqn-w1}
\begin{split}
w^*=-&|y_1|^{2+\alpha} +\frac{1}{4} y_2^2 + a+ b_1y_1+b_2y_2\\
& + \theta \, \left ( c\, y_1y_2 + d_1|y_1|^{2+\alpha} + d_2
y_2^2+O((|y_1|^{2+\alpha}+y_2^2)^{1+\delta}\, ) \right )
\end{split}
\end{equation} with the
coefficients $a,b_i,c,d_i$ bounded by a universal constant.

From (\ref{1.6}) we find that
$$w^*(0,y_2) \ge -C \sqrt \eps \quad \mbox{and} \quad  w^*(y_1,0) \le C \sqrt \eps$$
since, from the convexity in $y_2$ and concavity in $y_1$ of
$u^*$,
$$u^*(0,y_2) \geq 0 \quad \mbox{and} \quad u^*(y_1,0) \leq 0.$$
This and  \eqref{eqn-w1} imply  the bounds
$$|a| \le C  \eps^{1/2}, \quad |b_1| \le C \eps^{1/4}, \quad |b_2| \le C
\eps^{1/4}.$$
Thus, if $|y_1|^{2+\alpha}+ y_2^2 \le 10 \, t_0$,  then
$$w^*=-(1- d_1\theta)\, |y_1|^{2+\alpha} + \left (\frac{1}{4} + d_2 \theta \right )y_2^2+ c \theta \, y_1y_2 +
O(\eps^{1/4} + \theta \, t_0^{1+\delta}).$$ Hence, by performing the
partial Legendre transform on $w^*$ (using that $(w^*)^*=w$ and \eqref{legp}),  we obtain
\begin{equation}{\label{1.7}}
\left | w-[e_1|x_1|^{2+\alpha}+e_2\, (x_2+e_3\, x_1)^2] \right |  \le
C(\eps^{1/4} + \theta t_0^{1+\delta})
\end{equation}
for
$$|x_1|^{2+\alpha} + 4e_2^2\, (x_2+e_3x_1)^2 \le 10\, t_0$$
with $|e_1-1|,|e_2-1|, |e_3|$ bounded by $C \theta$.

\smallskip
We next observe that  if $p(x) = e_1|x_1|^{2+\alpha}+e_2\, (x_2+e_3\, x_1)^2$,
then the function
$$\tilde p(y) := \frac{1}{t_0} p(Fy)$$
with $F$ given by
$$F^{-1}:=\begin{pmatrix}
  t_0^{-\frac{1}{2+\alpha}} & 0 \\
  0 & t_0^{-\frac{1}{2}}
\end{pmatrix}
\begin{pmatrix}
  e_1^{\frac{1}{2+\alpha}} & 0 \\
  e_3 e_2^{\frac{1}{2}} & e_2^{\frac{1}{2}}
\end{pmatrix}=D_{t_0}^{-1}A^{-1}$$
satisfies
$$\tilde p(y) =
|y_1|^{2+\alpha}+y_2^2.$$
Hence, denoting by
$$\tilde{w}(y)=\frac{1}{t_0}w(Fy)$$
we conclude  from  (\ref{1.7})  that
$$|\tilde{w}(y)-(|y_1|^{2+\alpha}+y_2^2)| \le C(\eps^{1/4}t_0^{-1}
+ \theta t_0^{\delta}),  \qquad \mbox{for $|y_1|^{2+\alpha} + y_2^2
\le 2$}.
$$
\
Since $|\tilde{w}-\tilde{u}| \le C \eps^{1/2} t_0^{-1}$ (because $|w-u| \le C \eps^{1/2}$) we find
for  $\eps < \min(\theta^8,\eps_0) $, with $\eps_0 $ small,  that
$$\{ \tilde{u}<1 \} \in \Gamma \pm \gamma$$
with
$$\gamma=C(\eps^{1/4}\, t_0^{-1} + \theta \, t_0^{\delta}) \le
\theta \, t_0^{\delta'}.$$
The proof is now completed since
$S_{t_0} = F \{ \tilde {u}<1 \} = A\,  D_{t_0} \{ \tilde u < 1 \}.$

\end{proof}

\smallskip
The proof given above  also shows  the following Lemma.

\begin{lem}{\label{l2}} Assume that $u$  satisfies
$$ \det D^2 u = c \, f(x),  \qquad \mbox{on} \,\, S_1$$
and
$$ B_{1/k_0} \subset S_1 \subset B_{k_0}.$$
Then, given $\theta_0$, there exist $\eps_1(\theta_0,k_0)$ and
$t_1(\theta_0,k_0)$ small such that if
$$|f(x)- |x_1|^\alpha| \le \varepsilon_1$$
then
$$S_{t_1} \in A_0 D_{t_1}(\Gamma \pm \theta_0)$$
with
\begin{equation}{\label{A_0}}
A_0:=\begin{pmatrix}
  a_{0,11} & 0 \\
  a_{0,21} & a_{0,22}
\end{pmatrix}
\end{equation}
and
$$c(k_0)  \le a_{0,ii} \le C(k_0), \quad |a_{0,12}| \le C(k_0)$$
for some universal constants $c(k_0)$, $C(k_0)$.
\end{lem}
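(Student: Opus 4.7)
The plan is to follow the argument of Lemma \ref{l1} almost verbatim, observing that the strong hypothesis $S_1\in\Gamma\pm\theta$ with $\theta$ small was used there only to pin $c$ close to $2(\alpha+1)(\alpha+2)$ and to get the quantitative bound $|w-v|\le C\theta$, which in turn forced the final normalizing matrix to be close to the identity. Here these are replaced by the qualitative two-sided bounds $c\in[c_-(k_0),c_+(k_0)]$ and $|w-v|\le C(k_0)$, and accordingly $A_0$ is only required to be bounded above and below rather than close to $I$.

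Concretely, I would first compare $u$ with the barriers $v\pm\sqrt{c\eps}\,|x|^2$, where $v=C_\alpha(|x_1|^{2+\alpha}+x_2^2)$ is the radial solution of $\det D^2 v=c|x_1|^\alpha$, to deduce $c\in[c_-(k_0),c_+(k_0)]$ from $B_{1/k_0}\subset S_1\subset B_{k_0}$. Next, letting $w$ solve $\det D^2 w = c|x_1|^\alpha$ in $S_1$ with $w=u$ on $\partial S_1$, the barrier argument of Lemma \ref{l1} yields $|w-u|\le C(k_0)\sqrt\eps$, hence $|u^*-w^*|\le C(k_0)\sqrt\eps$ on the common domain of the partial Legendre transforms. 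The Legendre image of $S_1$ contains a fixed ball $B_\rho$, $\rho=\rho(k_0)$, on which $w^*$ is bounded by $C(k_0)$ and solves the linear equation $w^*_{11}+c|y_1|^\alpha w^*_{22}=0$. After rescaling $y_1\mapsto \lambda y_1$ with $c\lambda^\alpha=1$, Lemma \ref{lineq} yields
$$w^*(y) = a_0 + a_1\cdot y + a_2\, y_1 y_2 + a_3\left(\frac{1}{2}y_2^2 - \frac{1}{(\alpha+2)(\alpha+1)}|y_1|^{2+\alpha}\right) + O\bigl((y_2^2+|y_1|^{2+\alpha})^{1+\delta}\bigr),$$
with all $|a_i|$ and the $O$-term bounded by $C(k_0)$. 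The inequalities $u^*(0,y_2)\ge 0$ and $u^*(y_1,0)\le 0$ (from convexity in $y_2$, concavity in $y_1$, and $u(0)=0$, $\nabla u(0)=0$), combined with $|u^*-w^*|\le C\sqrt\eps$, force $|a_0|,|a_1|\le C(k_0)\,\eps^{1/4}$. Undoing the Legendre transform via \eqref{legp} yields $w(x)\approx e_1|x_1|^{2+\alpha}+e_2(x_2+e_3 x_1)^2$ near $0$, modulo errors $O(\eps^{1/4}+(|x_1|^{2+\alpha}+x_2^2)^{1+\delta})$, with $e_1,e_2\in[c(k_0),C(k_0)]$ and $|e_3|\le C(k_0)$. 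The rescaling performed at the end of the proof of Lemma \ref{l1}, with $\eps_1=\eps_1(\theta_0,k_0)$ and $t_1=t_1(\theta_0,k_0)$ chosen small enough, then produces $S_{t_1}^u\in A_0 D_{t_1}(\Gamma\pm\theta_0)$ with $A_0$ built from $e_1,e_2,e_3$ and satisfying the required two-sided bounds.

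The hard part is the lower bound $e_1,e_2\ge c(k_0)>0$, equivalently the lower bound $a_3\ge c(k_0)>0$ on the leading quadratic coefficient of $w^*$. This is the only step genuinely different from Lemma \ref{l1}, and it encodes the quantitative input of $S_1\supset B_{1/k_0}$: if $a_3$ were arbitrarily small, the expansion above would force $w$ (hence $u$) to be too flat in the $x_2$-direction at $0$, contradicting the round section. This rigidity can be extracted either from a Pogorelov-type estimate $w_{22}\le C(k_0)$ in the interior of $S_1$ (so that $a_3\sim 1/w_{22}(0)$ is bounded below), or via a compactness argument on the family of solutions of $\det D^2 w=c|x_1|^\alpha$ with $c\in[c_-(k_0),C(k_0)]$ and $B_{1/k_0}\subset S_1^w\subset B_{k_0}$.
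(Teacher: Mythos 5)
Your proposal is correct and is essentially the paper's own argument: the paper proves Lemma \ref{l2} only by the remark that ``the proof given above [of Lemma \ref{l1}] also shows'' it, and your adaptation --- replacing the $\theta$-closeness to $\Gamma$ by two-sided $k_0$-bounds, so that $c$ and the coefficients $e_1,e_2,e_3$ are merely bounded rather than near their model values --- is exactly what is intended. The one step you rightly flag as genuinely new, the lower bound $a_3\ge c(k_0)$, is handled correctly via the Pogorelov estimate $w_{22}\le C$ (equivalently $a_3=w^*_{22}(0)=1/w_{22}(0)$ bounded below), which the paper records in its preliminaries precisely for equations of the form $\det D^2w=h(x_1)$ with $w$ constant on the boundary of the section.
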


The proof of Theorem \ref{t2} readily follows from   the next
proposition which shows that if the section $S_\lambda$ has large
eccentricity,  for some $\lambda$,  then $u$ enjoys  the nonradial
behavior \eqref{nonrad} at the origin.

\begin{prop}{\label{p1}}
Assume that $u$  solves  the equation
$$\det D^2 u = |x|^\alpha, \qquad \mbox{on} \,\, S_1$$
and that $S_1$ has large eccentricity, i.e.
$$ F B_{1/k_0} \subset S_1 \subset F B_{k_0}, \quad F:=c\begin{pmatrix}
  b & 0 \\
  0 & 1/b
\end{pmatrix} $$
with $b \ge C_0$. Then,  there exists a $z$-system of coordinates
such that
\
\begin{equation}{\label{1.8}}
u(z)=\frac{a}{(\alpha+2)(\alpha+1)}|z_1|^{2+\alpha}+\frac{1}{2a}z_2^2
+O \left ( (|z_1|^{2+\alpha}+z_2^2)^{1+\delta} \right)
\end{equation}
for some $a>0$.

\end{prop}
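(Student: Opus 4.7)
The plan is to first normalize the large eccentricity of $S_1$ using the affine map $F$ given in the hypothesis, then apply Lemma \ref{l2} once, and finally iterate Lemma \ref{l1} indefinitely to extract the expansion \eqref{1.8}. More concretely, I would set $\tilde u(y) := u(Fy)$. The containment $FB_{1/k_0} \subset S_1 \subset FB_{k_0}$ becomes $B_{1/k_0} \subset \{\tilde u < 1\} \subset B_{k_0}$, and a direct computation yields
\[
\det D^2 \tilde u(y) = (\det F)^2\, |Fy|^\alpha = c\, (y_1^2 + b^{-4}\, y_2^2)^{\alpha/2}
\]
for an explicit constant $c > 0$. Since $|y| \le k_0$ on $\{\tilde u < 1\}$ and $b \ge C_0$, the pointwise bound $\bigl|(y_1^2 + b^{-4} y_2^2)^{\alpha/2} - |y_1|^\alpha\bigr| \le C(k_0)\, b^{-\gamma}$ holds for some $\gamma(\alpha) > 0$, so the right-hand side can be made as close to $c\,|y_1|^\alpha$ as we wish by choosing $C_0$ large. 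Lemma \ref{l2} then yields, for any preassigned small $\theta_0$, a section $\{\tilde u < t_1\} \in A_0 D_{t_1}(\Gamma \pm \theta_0)$ with $A_0$ of the form \eqref{A_0}.

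Next, setting $u_0(z) := t_1^{-1}\tilde u(A_0 D_{t_1} z)$ and then $u_{k+1}(z) := t_0^{-1} u_k(A_k D_{t_0} z)$ recursively, I would apply Lemma \ref{l1} at each step to obtain lower triangular matrices $A_k$ with $|A_k - I| \le C\theta_k$ and sections $\{u_k < 1\} \in \Gamma \pm \theta_k$, where $\theta_k = \theta_0\, t_0^{k\delta}$. The main technical obstacle, which must be checked carefully, is that the rescaling preserves the required form of the right-hand side, namely
\[
\det D^2 u_{k+1}(z) = c_{k+1}\, f_{k+1}(z), \qquad |f_{k+1}(z) - |z_1|^\alpha| \le \eps_{k+1} \le \theta_{k+1}^8,
\]
so that Lemma \ref{l1} can legitimately be reapplied at the next step. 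This reduces to a direct calculation: under $x = A_k D_{t_0} z$ with $|A_k - I| \le C\theta_k$, one checks that $|x|^\alpha$ equals $t_0^{\alpha/(2+\alpha)}|z_1|^\alpha$ up to a relative error of size $O(\theta_k + t_0^{\mu})$ for some $\mu(\alpha) > 0$, while the prefactor $t_0^{-2}(\det A_k D_{t_0})^2$ cancels the factor $t_0^{\alpha/(2+\alpha)}$ exactly. This cancellation is the same one used implicitly inside the proof of Lemma \ref{l1}, but it must now be propagated through the induction; the smallness hypothesis $\eps_k \le \theta_k^8$ is maintained by shrinking $\theta_0$ (equivalently, enlarging $C_0$) once at the outset.

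Finally, to extract the expansion, I would compose the affine maps $T_k := F A_0 D_{t_1} A_1 D_{t_0} A_2 D_{t_0} \cdots A_k D_{t_0}$ and refactor them as $T_k = L_k\, D_{t_1 t_0^k}$ by pushing the diagonal factors to the right using the identity $D A D^{-1} = \tilde A$, where $\tilde A$ is a lower triangular matrix of the same structure as $A$ still satisfying $|\tilde A - I| \le C|A - I|$. Since $\theta_k \to 0$ geometrically, the $L_k$ form a Cauchy sequence converging to a limit $L_\infty$ with positive diagonal entries. In the $z$-coordinates defined by $x = L_\infty z$, the iteration translates into $S^u_{t_1 t_0^k} \subset L_\infty D_{t_1 t_0^k}(\Gamma \pm C\theta_0\, t_0^{k\delta})$, and a standard dyadic summation upgrades this geometric information on the sections to the pointwise estimate \eqref{1.8}, with the constant $a > 0$ determined by the diagonal entries of $L_\infty$ together with the normalization $\det D^2\bigl(\tfrac{a}{(\alpha+2)(\alpha+1)}\,|z_1|^{2+\alpha} + \tfrac{1}{2a}\, z_2^2\bigr) = |z_1|^\alpha$.
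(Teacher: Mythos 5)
Your overall architecture --- normalize with $F$, apply Lemma \ref{l2} once, iterate Lemma \ref{l1}, converge the composed affine maps, and pin down $a$ through the limiting equation $\det D^2=|z_1|^\alpha$ --- is the same as the paper's. But the step you yourself flag as the main technical obstacle is handled incorrectly, and as written the induction does not close. You propagate the right-hand side step by step, deriving $f_{k+1}$ from $f_k$ via the single map $A_kD_{t_0}$. Two things go wrong. First, if $|f_k-|z_1|^\alpha|\le\eps_k$, then after the substitution $x=A_kD_{t_0}z$ and division by the normalizing constant $a_{k,11}^\alpha\,t_0^{\alpha/(2+\alpha)}$ (the exact factor by which $|x_1|^\alpha$ transforms), the inherited error becomes $\eps_k\,t_0^{-\alpha/(2+\alpha)}$: it is amplified by a fixed factor greater than $1$ at every step, so $\eps_k$ grows geometrically rather than decaying. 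Second, even your ``fresh'' error term $O(t_0^\mu)$ is a constant independent of $k$, while Lemma \ref{l1} requires $\eps_k^{1/8}\le\theta_k=\theta_0t_0^{k\delta}\to0$; shrinking $\theta_0$ once at the outset cannot rescue a per-step error bounded below by a constant. (Note also that the quantity you propose to transform, the isotropic $|x|^\alpha$, is the right-hand side only at the initial step; at step $k$ the right-hand side is already the anisotropic pullback of $|x|^\alpha$ under the full composition.)

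The fix is what the paper does: at each stage one does not iterate $f_k\mapsto f_{k+1}$ but returns to the original equation and computes the right-hand side for $v_t(x)=t^{-1}u(FA_kD_tx)$, $t=t_1t_0^k$, directly as a constant times $|FA_kD_tx|^\alpha$. Because the second coordinate of $FA_kD_tx$ carries both the factor $b^{-2}$ coming from $F$ and the factor $t^{1/2}$ (versus $t^{1/(2+\alpha)}$ in the first coordinate), one obtains $|f_t-|x_1|^\alpha|\le b^{-2}t^{\alpha/(2(2+\alpha))}$, an error that decays geometrically in $k$. Choosing $\delta$ small compared to $\alpha/(16(2+\alpha))$ then keeps $\eps_t^{1/8}\le\theta_k$ for every $k$, and the remainder of your argument (the refactoring $T_k=L_kD_{t_1t_0^k}$, the convergence of $L_k$, the dyadic summation, and the normalization determining $a$) goes through essentially as you describe.
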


\begin{proof} The proof  will be based on an inductive argument, where at each step will
use Lemma \ref{l1}.

Denote by
$$ v_1 (x):= u(Fx),$$
and compute that $v_1$ satisfies the equation
$$ \det D^2 v_1 (x)= (\det F )^2 |Fx| ^\alpha = c^{4+\alpha}  b^\alpha |(x_1, b^{-2}x_2)|^\alpha.$$
Also,
$$ \{v_1 < 1 \}=F^{-1}S_1.$$
If $b$ is large, then $v_1$ satisfies  hypothesis of the Lemma
\ref{l2}. Hence,  for some fixed $\theta_0$ we obtain
$$ S_{t_1} = F \{ v_1 < t_1 \} \in  F A_0 D_{t_1}(\Gamma
\pm \theta_0)$$ with $A_0$ satisfying (\ref{A_0}).

We assume by induction that for $t=t_1 t_0^k$ we have

$$ S_t \in F A_k D_t (\Gamma \pm \theta_0 t_0^{(k-1)
\delta })$$ with
$$A_k:=\begin{pmatrix}
  a_{k,11} & 0 \\
  a_{k,21} & a_{k,22}
\end{pmatrix} $$
and
\begin{equation}{\label{a_bd}}
 \quad  c/2 \le a_{k,ii} \le 2C, \quad |a_{k,21}| \le 2C.
\end{equation}
We will  show that

$$ S_{t_0t} \in F A_{k+1} D_{t_0 t} (\Gamma \pm \theta_0
t_0^{k\delta})$$
where
$$A_{k+1}=A_k\, E_k$$
and
$$E_k:=\begin{pmatrix}
  e_{k,11} & 0 \\
  e_{k,21} & e_{k,22}
\end{pmatrix} $$
with
\begin{equation}{\label{e_bd}}
|e_{k,ii}-1| \le C\theta_0 t_0^{(k-1) \delta}, \quad
|e_{k,21}|\, t^{-\frac{\alpha}{2(2+\alpha)}} \le C\theta_0 t_0^{(k-1)
\delta}.
\end{equation}\\
Notice that condition \eqref{e_bd} implies the bound
\begin{equation}{\label{a-a}}
|A_{k+1}-A_k| \le C \theta_0 t_0^{(k-1) \delta}.
\end{equation}

To prove this inductive step, we observe that the  function
$$  v_t (x) := t^{-1} u (F A_k D_t
x)$$ satisfies in $ \{ v_t <1 \} $ the equation

$$\det D^2 v_t = c_t\,  |\tilde x|^\alpha$$
with
$$|\tilde x|^\alpha=\left | \left (a_{k,11}t^\frac{1}{2+\alpha}x_1,b^{-2}
(a_{k,21}t^\frac{1}{2+\alpha}x_1+a_{k,22}t^\frac{1}{2}x_2  )
 \right ) \right |^\alpha=c_t'\, f_t(x)$$
and
$$|f_t-|x_1|^\alpha|\le b^{-2}t^\frac{\alpha}{2(2+\alpha)}.$$
Also
$$\{ v_t <1 \}\in \Gamma\pm \theta_0 t_0^{(k-1)\delta}$$

\medskip
\noindent since $ S_t \in F A_k D_t (\Gamma \pm \theta_0 t_0^{(k-1)
\delta })$ by the inductive assumption.
Hence, if  $\delta=\delta(\alpha)$  is chosen  small, then $v_t$ satisfies the
assumptions of Lemma \ref{l1}, yielding to

$$\{ v_t <t_0 \} \in \tilde A D_{t_0} (\Gamma \pm \theta_0
t_0^{k\delta})$$ with
\begin{equation}{\label{1.9}}
|\tilde A - I| \le C \theta_0 t_0^{(k-1)\delta}.
\end{equation}
Thus
$$S_{t_0t} \in F A_k D_t \tilde A D_{t_0}(\Gamma \pm
\theta_0 t_0^{k\delta}).$$
\
Defining $E_k$ such that
$$D_t \tilde A = E_{k} D_t,$$
we see from (\ref{1.9}) that $E_k$ satisfies (\ref{e_bd}). We
conclude the proof of
 the induction step by first choosing $\theta_0$ small so
that (\ref{A_0}) and (\ref{a-a}) imply that (\ref{a_bd}) is always
satisfied.

Denote by
$$A^*:= \lim_{k \to \infty}A_k.$$
We will  prove next that
\begin{equation}{\label{1.10}}
S_t \in F A^* D_t (\Gamma \pm C' t^{\delta}).
\end{equation}

\noindent As before, let $t=t_1 t_0^k$. Notice that
$$ A^*=A_k E_k^*, \quad
E_k^*:= \Pi _{i=k}^{\infty}E_i$$ and it is straightforward to
check from (\ref{e_bd}) that

\begin{equation}{\label{e*_bd}}
|e^*_{k,ii}-1| \le C_1 t^{\delta}, \quad
|e^*_{k,12}|\, t^{-\frac{\alpha}{2(2+\alpha)}} \le C_1 t^\delta.
\end{equation}

\noindent We have
$$ A_k D_t = A^*  (E_k^*)^{-1} D_t = A^* D_t
\tilde E$$
 with
$$|\tilde e_{k,ii}-1| \le C_2 t^{\delta},
\quad |\tilde e_{k,12}| \le C_2 t^{\delta}.$$
\noindent Now (\ref{1.10}) follows since
$$\tilde E (\Gamma \pm C_2 t^\delta) \subset \Gamma \pm
C' t^\delta.$$

\medskip
Finally, from (\ref{1.10}) we see that

$$u(FA^*x) = |x_1|^{2+\alpha} + x_2^2 + O((|x_1|^{2+\alpha} + x_2^2)^{1 + \delta})$$

\noindent which implies that in a $z$-system of coordinates

$$u(z)=\beta_1|z_1|^{2+\alpha} + \beta_2\, z_2^2 + O((|z_1|^{2+\alpha} + z_2^2)^{1 + \delta}).$$

\noindent The rescaled functions
$$r^{-1}u(r^\frac{1}{2+\alpha}z_1,r^\frac{1}{2}z_2)$$
converge,  as $r \to 0$,  to
$$\tilde u(z):=\beta_1|z_1|^{2+\alpha} + \beta_2z_2^2.$$
Moreover, this function solves the limiting equation
$$\det D^2 \tilde u=|z_1|^\alpha.$$

\noindent Hence
$$2(2+\alpha)(1+\alpha)\beta_1\beta_2=1$$
which implies \eqref{1.8}.
\end{proof}

\

\section{Negative powers}{\label{sec3}}

In this section we consider the equation
\begin{equation}{\label{3.eq}}
\det D^2 u=|x|^\alpha,\qquad \mbox{in} \ \Omega \subset \R^2
\end{equation}
in the negative range of exponents $-2 < \alpha < 0$.

\noindent  We will assume, throughout the section,   that $0 \in \Omega$ and
$$u(0)=0, \,\, \nabla u(0)=0.$$

\noindent Our goal is to prove the  following proposition, which shows  that solutions of equation \eqref{3.eq} admit only the radial behavior near the origin. This is in contrast
with the case $0 < \alpha < \infty$, where both the radial behavior  and
the non-radial behavior \eqref{1.8} occur (see Proposition \ref{p1}).

\begin{prop}{\label{p3.1}}
There exist positive constants $c$, $C$ (depending on $u$) such that
$$c\, |x|^{2+\alpha /2}\le u(x) \le C \, |x|^{2+\alpha /2}$$
near the origin.
\end{prop}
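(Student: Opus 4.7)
The plan is to run the rescaling/iteration machinery of Section \ref{sec1}, leveraging the sign $\alpha<0$ to force the eccentricities of nested sections at the origin to \emph{decrease} rather than compound. The target geometric conclusion is that every section $S_t=S^u_{t,0}$ is comparable to a Euclidean ball $B_{\rho_t}$ with $\rho_t\asymp t^{2/(4+\alpha)}$; this roundness is equivalent to the two-sided estimate $c|x|^{2+\alpha/2}\le u(x)\le C|x|^{2+\alpha/2}$ by the Monge--Amp\`ere volume identity $\int_{S_t}|x|^\alpha\,dx\asymp t^2$ combined with convexity.

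First I would verify the assertion flagged in Section \ref{sec0} that $|x|^\alpha\,dx$ is doubling with respect to ellipsoids for $-1<\alpha<0$; this is a direct calculation using integrability of the singularity at the origin. Via Theorem \ref{caf1}, this legitimizes the eccentricity language $S_t\sim A_t$. Next I would establish a counterpart of Lemma \ref{l1}: if $S_1\in B_1\pm\theta$ and $\det D^2 u=cf$ with $|f(x)-|x_1|^\alpha|\le\eps$ on $S_1$, then $S_{t_0}\in A\,D_{t_0}(B_1\pm C\theta t_0^\delta)$, where $D_{t_0}$ is the diagonal rescaling by $(t_0^{1/(2+\alpha)},t_0^{1/2})$ and $|A-I|\le C\theta$. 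The proof is structurally identical to Lemma \ref{l1}: compare $u$ with the exact solution $v=\text{const}\cdot(|x_1|^{2+\alpha}+x_2^2)$ of $\det D^2 v=c|x_1|^\alpha$ via barriers $v\pm\sqrt{c\eps}\,|x|^2$, pass to the partial Legendre transform, and invoke an extension of Lemma \ref{lineq} to the range $-1<\alpha<0$. The decisive sign observation is that the sections of the model $v$ are now elongated in $x_2$ (since $2+\alpha<2$), i.e.\ along the axis in which the original $S_1$ was short.

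Mirroring Proposition \ref{p1}, I would then iterate. Suppose some $S_{t_k}$ has large eccentricity $b$ along the $x_1$ direction and normalize by $F=\text{diag}(b,1/b)$ so that $v_k(x)=t_k^{-1}u(Fx)$ satisfies $\{v_k<1\}\sim B_1$ and $\det D^2 v_k\approx c|x_1|^\alpha$. The iteration lemma forces the next normalized section to be elongated in $x_2$, which, unwound in the original coordinates, amounts to a strict decrease of the $x_1$-eccentricity by a fixed factor. After finitely many steps the eccentricity falls below the perturbative threshold of the iteration lemma, which then propagates roundness to all smaller scales; the limit John matrices coincide with the identity and the sections converge to Euclidean balls, yielding the proposition.

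The main obstacle is the range $-2<\alpha\le -1$, where $|x|^\alpha\,dx$ fails to be doubling with respect to ellipsoids and Theorem \ref{caf1} does not directly produce the matrices $A_t$. I would handle this either by a direct geometric/ABP argument exploiting that the sections here are centered at the concentration point $0$ of the weight (so the weight still exerts strong control on the relevant Monge--Amp\`ere mass), or by an approximation argument from the doubling regime $\alpha>-1$. A secondary technicality is extending Lemma \ref{lineq} to negative $\alpha$: the coefficient $|y_1|^\alpha$ now blows up at $\{y_1=0\}$, but the energy argument should survive provided one tracks integrability of the weighted quantities, which is automatic once $\alpha>-1$.
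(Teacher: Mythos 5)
Your guiding heuristic is the right one and matches the paper's: for $\alpha<0$ the model sections $\{|x_1|^{2+\alpha}+x_2^2<t\}$ are longer in $x_2$ (since $2+\alpha<2$), so an elongated section should become rounder at smaller scales. The gap is in the implementation. The engine of Lemma \ref{l1} is the pointwise hypothesis $|f(x)-|x_1|^\alpha|\le\eps$, which feeds the barrier comparison $\det D^2(v+\sqrt{c\eps}\,|x|^2)>c(|x_1|^\alpha+\eps)\ge\det D^2u$. For $\alpha<0$ this hypothesis is never satisfied by the normalized right-hand side: after rescaling by $F=\mathrm{diag}(b,1/b)$ the forcing is $c\,(x_1^2+b^{-4}x_2^2)^{\alpha/2}$, which remains finite on $\{x_1=0,\ x_2\neq0\}$ while $|x_1|^\alpha$ blows up there, so the difference is unbounded in $L^\infty$ no matter how large $b$ is. The approximation holds only weakly, as measures, which is why the paper abandons the perturbative iteration in favor of a compactness argument: renormalize, pass to the weak limit of the Monge--Amp\`ere measures to get $\det D^2v=c|x_1|^\alpha$, use Pogorelov's interior estimate $v_{22}\le C$ (available because the limit forcing is independent of $x_2$) to see that $S^v_\delta$ contains a segment of length $\delta^{1/2}$ in the $x_2$ direction, and then extract the eccentricity decay $a\le C\delta^{-\alpha/(4(2+\alpha))}\le 1/4$ from the volume identity --- the favorable sign of that exponent being exactly where $\alpha<0$ enters. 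Your Lemma-\ref{l1}-style scheme cannot even be started without replacing the $L^\infty$ stability step by a weak/$L^p$ stability statement, which is a different argument from the one you describe; relatedly, your final assertion that the John matrices converge to the identity is stronger than what is needed or proved (bounded eccentricity suffices, and asymptotic roundness is really the content of Theorem \ref{t5}).

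The second, larger gap is the range $-2<\alpha\le-1$, which you defer to ``a direct geometric/ABP argument'' or ``approximation from the doubling regime.'' This case genuinely requires new machinery: the rescaled measures converge weakly to the singular measure $c\,|x_2|^{1+\alpha}d\mathcal{H}^1_{\{x_1=0\}}$ supported on a line, not to $c|x_1|^\alpha dx$; doubling with respect to ellipsoids fails, so Theorem \ref{caf1} does not apply to the sections $S_t$ at all. The paper must (i) switch to Caffarelli's centered sections $T_t$ and the version of Theorem \ref{caf1} adapted to them, (ii) prove strict convexity of the limit restricted to $\{x_1=0\}$ (Lemma \ref{l3.3}) in order to control the shape of $T^v_\delta$, and (iii) return from $T_t$ to $S_t$ by a separate compactness claim. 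None of this is recoverable by perturbing off the $\alpha>-1$ case, since the limiting equation changes type.
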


We distinguish two cases depending on whether or not the measure
$|x|^{\alpha}dx$ is doubling with respect to all ellipsoids (see
the discussion in Section \ref{sec0}).

\

\noindent{\bf i. The case $-1<\alpha<0:$} In this case the measure
$$\mu:=|x|^\alpha dx$$ is doubling with respect to ellipsoids. Indeed, it suffices to show that  there exists $c>0$ such that for any ellipsoid $E$, we have
\begin{equation}{\label{3.1}}
 \mu(x_0+E)\ge c \, \mu(x_0+2E).
 \end{equation}
Since $-1<\alpha$, the density
$$(x_1^2+x_2^2)^{\alpha /2}$$
is doubling on each line $x_2=const.$ with the doubling constant
independent of $x_2$. This implies that the density $\mu=|x|^\alpha\, dx$ is
doubling with respect to any line in the plane. From this and the
fact that $x_0+2E$ can be covered with translates of $x_0+E/2$
over a finite number of directions we obtain (\ref{3.1}).

From Theorem \ref{caf1}, there exists a matrix $A_t$ such that
$S_t \sim A_t$, i.e

\begin{equation}{\label{3.2}}
k_0^{-1}A_t B_r \subset S_t \subset k_0A_t B_r,
\end{equation}
with
$$r=t\,  (\mu(S_t))^{-1/2}, \quad \quad \det
A_t=1.$$
\smallskip

In this case Proposition \ref{p3.1} follows from the lemma below.

\begin{lem}{\label{l3.1}} There exist universal constants $C >0$ large and $\delta >0$,
such that if $S_t \sim A_t$ with $|A_t| >C$,  then
\begin{equation}\label{eqn-Sd}
S_{\delta t} \sim A_{\delta t}, \qquad
\mbox{ with\,  $|A_{\delta t} |\le |A_t|/2$} .
\end{equation}
In particular,  $|A_t| \le C |A_{t_0}|$,  if $t \le t_0$.

\end{lem}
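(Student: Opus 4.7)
The plan is to argue by contradiction via affine renormalization and compactness, exploiting the fact that for $-1<\alpha<0$ an eccentric section forces the rescaled right-hand side of the Monge-Amp\`ere equation to concentrate on a one-dimensional line, which in turn cannot support a balanced, bounded section.

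\textbf{Step 1 (Affine renormalization).} Given $S_t\sim A_t B_r$ with $M:=|A_t|\gg 1$, after an orthogonal change of coordinates we may assume $A_t=\operatorname{diag}(M,M^{-1})$. With $r=t(\mu(S_t))^{-1/2}$ as in Theorem \ref{caf1}, define $v(z):=s^{-1}u(rA_t z)$, where the scalar $s$ is chosen so that $\{v<1\}$ is comparable to the unit ball $B_1$. A direct computation yields
\begin{equation*}
\det D^2 v(z)=c_M\bigl(M^2 z_1^2+M^{-2}z_2^2\bigr)^{\alpha/2}
\quad \text{on } \{v<1\},
\end{equation*}
with $v(0)=0$, $\nabla v(0)=0$, and $c_M$ bounded between two universal positive constants thanks to the doubling property of $|x|^\alpha\,dx$ in the range $-1<\alpha<0$ proved at the beginning of this section. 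Under the affine map $z\mapsto rA_t z$ the set $\{v<\delta\}$ is mapped to $S_{\delta t}$, so the eccentricity of $\{v<\delta\}$ in the $z$-frame exactly measures the ratio $|A_{\delta t}|/|A_t|$.

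\textbf{Step 2 (Limit identification).} Suppose the conclusion of the lemma fails. Then we may pick $t_j\to 0$ with $M_j:=|A_{t_j}|\to\infty$ but $|A_{\delta t_j}|\ge M_j/2$ for every small $\delta$. The associated rescaled solutions $v_j$ form a normal family via the balanced-section estimates of Theorem \ref{caf1}, and we extract a subsequential limit $v_\infty$ locally uniformly on $\{v_\infty<1\}$. Since $\alpha<0$, the density $(M_j^2 z_1^2+M_j^{-2}z_2^2)^{\alpha/2}$ tends to zero off $\{z_1=0\}$ while its integral over $\{v_j<1\}$ stays bounded; passing to the limit of the Alexandrov Monge-Amp\`ere measures shows that $\det D^2 v_\infty$ is supported on the line $\{z_1=0\}$. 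Convexity together with $v_\infty(0)=0$ and $\nabla v_\infty(0)=0$ then forces $v_\infty$ to be degenerate in the $z_2$ direction, namely of the form $\phi(z_1)$ for a convex function $\phi$ with $\phi(0)=\phi'(0)=0$.

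\textbf{Step 3 (Contradiction and quantitative improvement).} But such a $v_\infty$ has $\{v_\infty<1\}$ equal to the vertical strip $\{|z_1|<a\}$, which is not comparable to $B_1$; this contradicts the normalization $\{v_j<1\}\sim B_1$ passed to the limit. The quantitative version of the dichotomy gives the claimed factor-of-two decay: if $|A_t|$ exceeds a universal constant, then for a suitable small universal $\delta$ the section $\{v<\delta\}$ is forced to be close to the degenerate strip shape of the limit, and undoing the normalization $z\mapsto rA_t z$ shows that the best ellipsoid for $S_{\delta t}$ has norm at most $|A_t|/2$.

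The main obstacle is the passage to the Alexandrov limit: one must verify that the Monge-Amp\`ere measures $\det D^2 v_j$ converge weakly to a measure concentrated on $\{z_1=0\}$ with neither loss nor gain of mass, and that the rigidity of convex functions with line-supported Monge-Amp\`ere measure is strong enough to rule out the pretend-balanced sections. Both the uniform balancing of sub-sections of $v_j$ and the local uniform Lipschitz estimates used to extract the limit rely on the doubling property of $|x|^\alpha\,dx$ in the range $-1<\alpha<0$, which is precisely why the analysis of this section splits at $\alpha=-1$.
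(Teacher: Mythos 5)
Your overall skeleton --- contradiction, affine renormalization, compactness, identification of a limit equation, then a quantitative decay of eccentricity --- is the same as the paper's, but the central step, the identification of the limit measure, is wrong in the range $-1<\alpha<0$ to which this lemma applies. After the normalization $\det D^2 v_k(S^{v_k}_1)=1$ the densities are $c_k'\,(z_1^2+a_k^{-4}z_2^2)^{\alpha/2}$ with $c_k'$ bounded above and below; your version, $c_M(M^2z_1^2+M^{-2}z_2^2)^{\alpha/2}$ with $c_M$ universally bounded, would have total mass $\sim c_M M^{\alpha}\to 0$ and is inconsistent with the normalization. Since $|z_1|^{\alpha}$ is locally integrable for $\alpha>-1$, these densities converge to the absolutely continuous measure $c\,|z_1|^{\alpha}\,dz$: there is \emph{no} concentration on the line $\{z_1=0\}$. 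Concentration on a line occurs only for $-2<\alpha\le -1$, which is exactly why the paper splits the analysis there and treats that range with centered sections in Lemma \ref{l3.2}. Consequently your rigidity step collapses: the limit $v_\infty$ solves $\det D^2 v_\infty=c|z_1|^{\alpha}$, an equation admitting solutions with unit section comparable to $B_1$ (the analogue of $|z_1|^{2+\alpha}+z_2^2$), so $v_\infty$ is not of the form $\phi(z_1)$ and no contradiction with the ball normalization arises.

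The mechanism that actually produces the decay is different. Because the right-hand side $c|z_1|^{\alpha}$ of the limit equation is independent of $z_2$ and $v$ is constant on the boundary of its unit section, Pogorelov's interior estimate gives $v_{22}\le C_1$, hence $S^v_\delta$ contains the vertical segment $\{z_1=0,\ |z_2|\le(\delta/C_1)^{1/2}\}$. Comparing this with John's ellipsoid $A_\delta B_r$ for $S^v_\delta$ and with the identity $r=\delta\,[\det D^2v(S^v_\delta)]^{-1/2}$ yields $r/a\ge c\,\delta^{1/2}$ and $\delta^2\ge c\,r^2(r/a)(ar)^{1+\alpha}$, whence $a\le C\,\delta^{-\alpha/(4(2+\alpha))}\le 1/4$ for $\delta$ small universal (this is where $\alpha<0$ enters). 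Undoing the affine map, $S^{u_k}_{\delta t_k}\sim A^{u_k}_{t_k}A_\delta$ with $|A^{u_k}_{t_k}A_\delta|\le|A^{u_k}_{t_k}|/3$ for $k$ large, the desired contradiction. Your Step 3 provides no quantitative control of how the eccentricity of $\{v<\delta\}$ offsets that of $A_t$, and, resting on the wrong limit, it cannot be repaired as written.
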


\begin{proof} We will use a compactness argument.
Assume, by contradiction,  that the conclusion of the lemma is not
true.   Then we can find a sequence of solutions  $u_k$ of
\eqref{3.eq} with sections $S^{u_k}_{t_k}$ at $0$ such that
$S_{t_k}^{u_k} \sim A_{t_k}^{u_k}$ with $|A^{u_k}_{t_k}|\to
\infty$ and \eqref{eqn-Sd} does not hold for any $\delta >0$.

Without loss of generality we may assume that

\begin{equation}{\label{3.3}}
A^{u_k}_{t_k}:=\begin{pmatrix}
  a_k & 0 \\
  0 & a_k^{-1}
\end{pmatrix}, \quad a_k \to \infty.
\end{equation}

\noindent We renormalize the functions $u_k$ as

\begin{equation}\label{eqn-ren}
v_k(x):=\frac{1}{t_k}u_k(r_kA^{u_k}_{t_k}x)\end{equation}
so that
 $$\det D^2
v_k=c_k\, |A^{u_k}_{t_k}x|^\alpha=c_k'\, |x_1^2+a_k^{-4}x_2^2|^{\alpha
/2}$$
and
$$ k_0^{-1}B_1 \subset S^{v_k}_1 \subset k_0B_1.$$

\smallskip
\noindent Since,  $S_{t_k}^{u_k} \sim A_{t_k}^{u_k}$, i.e  in particular  $r_k = t_k \, (\mu(S_{t_k}^{u_k}))^{-1/2}$, the Monge Amp\'ere measure $\det D^2 v_k \, dx$
satisfies

$$ \det D^2 v_k(S^{v_k}_1)=r_k^2t_k^{-2}\mu
(S^{u_k}_{t_k})=1.$$

\noindent
Hence, as  $k \to \infty$ we can find a subsequence of the $v_k$'s that
 converge uniformly to a function $v$ that satisfies

\begin{equation}{\label{3.4}}
\det D^2 v=c\, |x_1|^\alpha dx
\end{equation}
and
$$k_0^{-1}B_1 \subset S^v_1 \subset k_0B_1,
 \quad \det D^2 v_k(S^v_1)= 1.$$

\medskip
\noindent Obviously, the constant $c$ in (\ref{3.4}) is bounded from above and
below by universal constants. Since the right hand side of
(\ref{3.4}) does not depend on $x_2$ and $v$ is constant on
$\partial S^v_1$, Pogorelov's interior estimate holds and we
obtain the bound

$$v_{22} < C_1, \qquad \mbox {in $(2k_0)^{-1}B_1$}.$$

This implies that the section $S^v_\delta $  contains a segment
of size $\delta^{1/2}$ in the $x_2$ direction, namely

\begin{equation}{\label{3.5}}
\{x_1=0,\,  |x_2|\le (\delta/C_1)^{1/2} \} \subset S^v_\delta.
\end{equation}
From  Theorem \ref{caf1}  there exists

\begin{equation}{\label{3.6}}
A_\delta=\begin{pmatrix}
  a & 0 \\
  b & a^{-1}
\end{pmatrix}, \quad 0<a<C(\delta), \quad |b|\le C(\delta)
\end{equation}
with
\begin{equation}{\label{3.7}}
k_0^{-1}A_\delta B_r \subset S^v_\delta \subset k_0A_\delta
B_r
\end{equation}
and
\begin{equation}{\label{3.8}}
r=\delta\,  [\det D^2 v(S^v_\delta)]^{-1/2}.
\end{equation}

\noindent  From (\ref{3.5}) and \eqref{3.7} we have

$$\frac{r}{a} \ge c_1\,  \delta^{1/2}$$
while from (\ref{3.6}), (\ref{3.7}) and  (\ref{3.8}) we get

$$\delta^2=r^2 \, \det D^2 v(S_\delta^v)\ge c_2 \, r^2 \, \frac{r}{a}(ar)^{1+\alpha}.$$

\noindent From the last two inequalities we obtain

\begin{equation}{\label{3.9}}
a \le C_2\delta^\frac {-\alpha}{4(2+\alpha)}\le 1/4 \quad \mbox{for
$\delta$ small universal}.
\end{equation}

\medskip

Since the $v_k$'s converge uniformly to $v$,   their $\delta$
sections also converge uniformly, thus
$$S^{v_k}_\delta \sim A_\delta,  \qquad \mbox{for $k$ large}$$
and hence
$$S^{u_k}_{\delta t_k} \sim A^{u_k}_ {t_k} A_\delta.$$
From (\ref{3.3}), (\ref{3.6}), (\ref{3.9}) we conclude
$$|A^{u_k} _{t_k} A_\delta| \le |A^{u_k}_ {t_k}|/3
 \quad \mbox{for $k$ large,}$$
which implies that the function $u_k$ satisfies \eqref{eqn-Sd} , a contradiction.

\end{proof}

\medskip

\noindent {\bf ii. The case $-2 < \alpha \le -1$}:   \ In this case the measure $\mu$ is not doubling with
respect to any convex set but it is still doubling with respect to
convex sets that have the origin as the center of mass.

We proceed as in the first case but replacing the sections $S_t$
with the sections $T_t$ that have $0$ as the center of mass. The
existence of these  sections follows from  the following lemma due
to L. Caffarelli, Lemma 2 in \cite{Ca2}.

\begin{lem} [Centered sections] {\label{bs}} Let $u:\mathbb{R}^n \to \mathbb{R} \cup \{\infty\}$ be a globally defined
convex function (we set $u=\infty$ outside $\Omega$). Also, assume
$u$ is bounded in a neighborhood of $0$ and the graph of $u$ does
not contain an entire line.

Then, for each $t>0$, there exists a ``$t-$ section"
$T_t$ centered at $0$,
that is there exists $p_t$ such that the convex set
$$T_t:=\{ \, u(x) < u(0)+ p_t \cdot x + t \, \}$$
is bounded and has $0$ as center of mass.
\end{lem}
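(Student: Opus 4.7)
The plan is to produce $p_t$ via a convex variational argument. For $p \in \R^n$, let $\ell_p(x) := u(0) + p \cdot x + t$ and
$$S(p) := \{\, x \in \R^n : u(x) < \ell_p(x)\,\},$$
an open convex set always containing $0$. Let $P := \{\, p \in \R^n : S(p) \text{ is bounded}\,\}$, which is open and convex. The first task, $P \neq \emptyset$, is exactly where the no-line hypothesis enters: it is equivalent to pointedness of the recession cone of the epigraph of $u$ in $\R^{n+1}$, and a pointed closed convex cone admits a supporting hyperplane meeting it only at the apex. Choosing such a hyperplane of the form $y = p \cdot x$ yields a $p$ for which $u - \ell_p \to +\infty$ along every direction in which $\mathrm{dom}(u)$ is unbounded, so $S(p)$ is bounded.

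On $P$ consider the functional
$$I(p) := \int_{S(p)} \bigl(\ell_p(x) - u(x)\bigr)\, dx = \int_{\R^n} \bigl[\ell_p(x) - u(x)\bigr]_+\, dx.$$
The right-hand expression shows $I$ is convex in $p$, since the integrand is convex in $p$ for each fixed $x$; boundedness of $S(p)$ together with $0 \le \ell_p - u \le t$ on $S(p)$ makes $I$ finite. Differentiation under the integral, legitimate because $\ell_p - u$ vanishes on $\partial S(p)$ (a set of measure zero, being contained in a convex hypersurface), yields
$$\nabla I(p) = \int_{S(p)} x\, dx.$$
Thus any critical point $p_t$ of $I$ provides a bounded section $T_t := S(p_t)$ whose center of mass is the origin.

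The remaining ingredient is coercivity: $I(p) \to +\infty$ as $p \to \partial P$ or $|p| \to \infty$ within $P$. Heuristically, as $p \to p_\infty \in \partial P$ the defining strict inequality for $P$ (namely $p \cdot v <$ the recession value of $u$ in direction $v$) saturates for some $v \neq 0$, so along the ray $\{s v\}$ the length of the segment $S(p) \cap \{sv : s \ge 0\}$ diverges; by convexity $S(p)$ contains a tube of fixed cross-section around this long segment, and the same holds for the smaller set $\{u < \ell_p - t/2\}$. Since $\ell_p - u \ge t/2$ there, $I(p) \ge (t/2)\, |\{u < \ell_p - t/2\}| \to \infty$. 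A convex function coercive on an open convex set attains its infimum at an interior point, delivering the desired $p_t$ with $\nabla I(p_t) = 0$.

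The main obstacle is the coercivity step: one must pass from the qualitative ``no line in the graph'' condition to a quantitative divergence estimate for the measure of the near-sections as $p$ approaches $\partial P$, which requires controlling the perpendicular cross-section of $S(p)$ uniformly in $p$. The non-emptiness of $P$, the convexity of $I$, the gradient formula, and the extraction of a minimizer are then standard convex analysis once coercivity is in hand.
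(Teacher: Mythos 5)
The paper does not actually prove this lemma: it is quoted as Lemma~2 of Caffarelli \cite{Ca2} and used as a black box, so your variational argument is a genuinely self-contained alternative to a citation. Your scheme is sound: $P$ is open and convex, its non-emptiness is correctly reduced to pointedness of the recession cone of $\operatorname{epi}(u)$ (any functional in the interior of the dual cone must be positive on the vertical direction $(0,1)$, so it can be normalized to the graph form $y=p\cdot x$); the functional $I(p)=\int[\ell_p-u]_+$ is convex as a superposition of the convex functions $p\mapsto[\ell_p(x)-u(x)]_+$; and the gradient formula is legitimate because the non-differentiability set $\{u=\ell_p\}$ really is contained in $\partial S(p)$ — for any $x$ with $u(x)\le\ell_p(x)$, convexity of $u-\ell_p$ together with $(u-\ell_p)(0)=-t<0$ forces $u<\ell_p$ on the half-open segment $[0,x)$, so $\{u\le\ell_p\}\subset\overline{S(p)}$ and the exceptional set lies on the boundary of a bounded convex body, hence is null. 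The coercivity step, which you correctly identify as the crux, can be completed exactly along the lines you sketch: if $p_\infty\in\partial P$, there is a unit $v$ with $(u-\ell_{p_\infty})^\infty(v)=0$, whence $u(sv)\le\ell_{p_\infty}(sv)-t$ for all $s\ge0$; for $p$ near $p_\infty$ the set $\{u<\ell_p-t/2\}$ then contains the convex hull of a fixed ball around $0$ and a segment of length $\sim t/|p-p_\infty|$ along $v$, so $I(p)\ge(t/2)\abs{\{u<\ell_p-t/2\}}\to\infty$.

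Two small repairs are needed in your write-up. First, the parenthetical ``$0\le\ell_p-u\le t$ on $S(p)$'' is false unless $p\in\partial u(0)$; finiteness of $I$ on $P$ instead follows because $u$ is bounded below on the bounded set $S(p)$ by an affine function (a supporting plane at $0$, which exists since $u$ is bounded near $0$). Second, the case $\abs{p}\to\infty$ inside $P$ is asserted but not argued; it is easy — on a fixed ball $B_\rho$ where $\abs{u-u(0)}\le M$ one has $\ell_p-u\ge p\cdot x+t-M$, so the cap $\{x\in B_\rho:\ \hat p\cdot x>\rho/2\}$, of fixed volume, lies in $S(p)$ and contributes at least $c(\rho)(\rho\abs{p}/2+t-M)$ to $I(p)$ — but it must be said, since $P$ may be all of $\R^n$. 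With these points added, the proof is complete.
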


Using the lemma above one can obtain   Theorem \ref{caf1}
(similarly as in \cite{Ca1}), with  $S_t$ is
 replaced by $T_t$:   {\it for every $T_t \subset
\Omega$ as above, there exists a unitary matrix $A_t$, such that

\begin{equation}{\label{3.10}}
k_0^{-1}A_t B_r \subset T_t \subset k_0A_t B_r
\end{equation}
with
$$r=t \, (\mu(T_t))^{-1/2}.$$
If $(\ref{3.10})$ is satisfied we write $T_t \sim A_t.$}

\medskip
We will next show the analogue  of Lemma \ref{l3.1} for this case.

\begin{lem}{\label{l3.2}} There exist  universal constants $C >0$ large and $\delta >0$,
such that if $T_t \sim A_t$ with $|A_t| >C$,  then     $ T_{\delta t} \subset T_t$ and
\begin{equation}\label{eqn-Td}
 T_{\delta t} \sim A_{\delta t}, \qquad \mbox{ with $|A_{\delta t}|\le |A_t|/2$}.
 \end{equation}
\end{lem}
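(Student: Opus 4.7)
The plan is to mirror the compactness argument of Lemma \ref{l3.1}, working throughout with the centered sections $T_t$ and the centered version of Caffarelli's theorem stated in \eqref{3.10}. Since $\mu=|x|^\alpha dx$ is still doubling with respect to convex sets centered at the origin, this version of Caffarelli's theorem is available and the key normalization step (reducing to a section comparable to a ball) still works.

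Argue by contradiction. Suppose there is a sequence of solutions $u_k$ of \eqref{3.eq} with $T^{u_k}_{t_k}\sim A^{u_k}_{t_k}$, $|A^{u_k}_{t_k}|\to\infty$, for which the conclusion fails. Using unimodularity and a rotation, assume $A^{u_k}_{t_k}=\mathrm{diag}(a_k,a_k^{-1})$ with $a_k\to\infty$. Rescale
$$v_k(x):=\frac{1}{t_k}\Bigl(u_k(r_k A^{u_k}_{t_k}x)-p_{t_k}\cdot (r_k A^{u_k}_{t_k}x)\Bigr),\qquad r_k=t_k(\mu(T^{u_k}_{t_k}))^{-1/2},$$
so that the centered $1$-section $T^{v_k}_1$ satisfies $k_0^{-1}B_1\subset T^{v_k}_1\subset k_0B_1$ and
$$\det D^2 v_k=c_k'\,(x_1^2+a_k^{-4}x_2^2)^{\alpha/2},\qquad \det D^2 v_k(T^{v_k}_1)=1.$$
Extract a subsequence converging uniformly on compact sets to a convex $v$ with
$$\det D^2 v=c\,|x_1|^\alpha,\qquad k_0^{-1}B_1\subset T^{v}_1\subset k_0B_1,$$
where $c$ is universally bounded above and below.

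Since the right-hand side depends only on $x_1$, equation \eqref{3.eq} is invariant under vertical translations, so the Pogorelov interior estimate applied along the $x_2$ direction yields $v_{22}\le C_1$ in $(2k_0)^{-1}B_1$. Consequently, as in Lemma \ref{l3.1}, the centered section $T^v_\delta$ contains the segment $\{x_1=0,\ |x_2|\le (\delta/C_1)^{1/2}\}$. Apply the centered version of Caffarelli's theorem to $T^v_\delta\sim A_\delta$ with
$$A_\delta=\begin{pmatrix}a&0\\ b&a^{-1}\end{pmatrix},\qquad r=\delta(\det D^2 v(T^v_\delta))^{-1/2},$$
so that $r/a\ge c_1\delta^{1/2}$ from the segment, while the measure estimate gives $\delta^2\ge c_2 r^2\cdot(r/a)(ar)^{1+\alpha}$. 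Combining these, exactly as in \eqref{3.9}, we obtain
$$a\le C_2\,\delta^{-\alpha/(4(2+\alpha))}\le 1/4$$
for $\delta$ small universal (note $-2<\alpha\le-1$ keeps the exponent in the admissible range). Uniform convergence of the $v_k$ on compacts implies uniform convergence of their centered sections, so $T^{v_k}_\delta\sim A_\delta$ for $k$ large, and unwinding the rescaling produces $T^{u_k}_{\delta t_k}\sim A^{u_k}_{t_k}A_\delta$ with $|A^{u_k}_{t_k}A_\delta|\le |A^{u_k}_{t_k}|/3$, contradicting the choice of the sequence.

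The inclusion $T_{\delta t}\subset T_t$, which was automatic for ordinary sections in Lemma \ref{l3.1} but not for centered ones, is the main additional subtlety. I would deduce it from the same compactness setup: on the rescaled limit $v$, the sets $T^v_\delta$ and $T^v_1$ are both bounded convex sets centered at $0$, and for $\delta$ sufficiently small the size estimates above force $T^v_\delta$ to lie well inside $(1/2)T^v_1$; stability of the centered-section construction under uniform convergence then gives $T^{v_k}_\delta\subset T^{v_k}_1$ for $k$ large, which translates back to $T^{u_k}_{\delta t_k}\subset T^{u_k}_{t_k}$. The main obstacle is exactly this containment step, because the supporting affine functions $p_t$ defining centered sections at different levels need not be comparable a priori; handling it requires verifying that under the normalization $T^{v_k}_1\subset k_0 B_1$ the supports $p_t$ cannot drift, and this is what the compactness/stability argument supplies.
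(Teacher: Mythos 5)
Your proposal transplants the proof of Lemma \ref{l3.1} essentially verbatim, but the key step fails in the range $-2<\alpha\le -1$: the rescaled densities $\mu_k=c_k'(x_1^2+a_k^{-4}x_2^2)^{\alpha/2}\,dx$ do \emph{not} converge weakly to $c|x_1|^\alpha\,dx$, because $|x_1|^\alpha$ is not locally integrable across $\{x_1=0\}$ when $\alpha\le -1$. Instead, as $a_k\to\infty$ the mass of $\mu_k$ on each horizontal line concentrates at $x_1=0$ (a scaling $x_1=a_k^{-2}|x_2|s$ shows the line integral is $\sim (a_k^{-2}|x_2|)^{1+\alpha}$), so the weak limit is the singular measure $c\,|x_2|^{1+\alpha}\,d\mathcal{H}^1_{\{x_1=0\}}$. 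Consequently your two main tools collapse: the Pogorelov estimate $v_{22}\le C_1$ is not available for the limit equation (the right-hand side is not an $L^1$ function of $x_1$, and $v$ is merely the convex envelope of its boundary data and its restriction to $\{x_1=0\}$), and the measure computation $\det D^2v(T^v_\delta)\ge c_2 (r/a)(ar)^{1+\alpha}$ uses $\int_0^{ar}x_1^\alpha\,dx_1\sim(ar)^{1+\alpha}$, which diverges for $\alpha\le -1$.

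The paper's actual argument replaces these steps. It identifies the limiting equation $\det D^2 v=c|x_2|^{1+\alpha}d\mathcal{H}^1_{\{x_1=0\}}$, proves via Lemma \ref{l3.3} that $v$ restricted to $\{x_1=0\}$ is strictly convex (using that $d\mathcal{H}^1_{\{x_1=0\}}$ is doubling with respect to convex sets centered on that line), and deduces the existence of supporting planes with slopes $\beta e_2\pm\gamma e_1$, hence the \emph{thinness} estimate $T^v_\delta\subset\{|x_1|\le c(\gamma)\delta\}$, i.e.\ $ar\le c\delta$. Combined with $r=\delta[\det D^2v(T^v_\delta)]^{-1/2}$ computed against the singular measure, this gives $a\le c\,\delta^{(2+\alpha)/(6+\alpha)}\le 1/4$. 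Note the logic is inverted relative to Lemma \ref{l3.1}: there one bounds the section from \emph{below} in the $x_2$ direction via Pogorelov; here one bounds it from \emph{above} in the $x_1$ direction via strict convexity on the line. Your closing discussion of the containment $T_{\delta t}\subset T_t$ identifies a real subtlety of centered sections, but it cannot be repaired until the limit equation is corrected.
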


\begin{proof} We argue similarly as in the proof of Lemma \ref{l3.1}. We assume by
contradiction that the conclusion does not hold for a sequence of
functions $u_k$. Proceeding as in the
proof of lemma \ref{l3.1}, we work with the renormalizations $v_k$ of
$u_k$ defined by \eqref{eqn-ren}  which satisfy

$$\det D^2
v_k=c_k'\, |x_1^2+a_k^{-4}x_2^2|^{\alpha /2}=:\mu_k$$
and
$$ k_0^{-1}B_1 \subset T^{v_k}_1 \subset k_0B_1, \quad \det
D^2v_k(T^{v_k}_1)=1. $$

\noindent As $k \to \infty$,  we can find a subsequence of the $v_k$'s  which
 converges  uniformly to a function $v$. Since $a_k \to \infty$ and $-2<
\alpha \le -1$, the corresponding measures $\mu_k$, when
restricted to a line $x_2=const.$,  converge weakly to the measure
$c \,  |x_2|^{1+\alpha}\delta_{\{x_1=0\}}$. This implies that the
measures $\mu_k$ converge weakly to $c\,
|x_2|^{1+\alpha}d\mathcal{H}^1_{\{x_1=0\}},$ where $d\mathcal{H}^1$
is the 1 dimensional Hausdorff measure.
Hence,  the limit function $v$ satisfies

\begin{equation}{\label{3.11}}
\det D^2 v=c\, |x_2|^{1+\alpha}\, d\mathcal{H}^1_{\{x_1=0\}}
\end{equation}

$$k_0^{-1}B_1 \subset T^v_1 \subset k_0B_1, \qquad \det D^2 v_k(T^v_1)=
1.$$
Clearly $c$ is bounded from above and below by universal constants.

We notice that the measure $d\mathcal{H}^1_{\{x_1=0\}}$ is doubling with
respect to any convex set with the center of mass on the line $\{x_1=0\}$.
Using the same methods as in the case of classical Monge-Amp\'ere equation
one can show that the graph of $v$ contains no line segments when
restricted to
$\{x_1=0\}$ (see the Lemma \ref{l3.3} below). From this and the fact that $v$ is the
convex envelope of its restriction on $\partial T_1^v$ and $\{x_1=0\}$
(see (\ref{3.11})) we conclude that there exist two supporting
planes with slopes $\beta e_2 \pm \gamma e_1$ to the graph of $v$ at $0$. Moreover,
it follows from the compactness of the equation (\ref{3.11})  that $\gamma$ can be
chosen universal, and the sections $T^v_\delta$ satisfy

$$T^v_\delta \subset (2k_0)^{-1}B_1$$
when $\delta \leq \delta_0$,  a universal constant. We have

\begin{equation}{\label{3.12}}
T^v_\delta \subset \{|x_1| \le c(\gamma)\delta \}.
\end{equation}
Let $A_\delta$ be of the form (\ref{3.6}) with

\begin{equation}{\nonumber}
k_0^{-1}A_\delta B_r \subset T^v_\delta \subset k_0A_\delta
B_r
\end{equation}
and
\begin{equation}{\label{3.13}}
r=\delta\,  [\det D^2 v(T^v_\delta)]^{-1/2} \sim \delta
(r/a)^{1+\alpha/2}.
\end{equation}

\medskip
\noindent
On the other hand (\ref{3.12}) implies

$$a\,  r \le c \delta$$
which together with (\ref{3.13}) yields

$$a\le c \,  \delta^{\frac{2+\alpha}{6+\alpha}} \le 1/4$$
for $\delta$ small enough. Now the contradiction follows as in
Lemma \ref{l3.1}.
\end{proof}

\begin{lem}{\label{l3.3}}    If $v$ satisfies $(\ref{3.11})$, then
$$v_0(t):=v(0,t)$$ is strictly convex.

\end{lem}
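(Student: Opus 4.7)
My plan is to argue by contradiction, using the Alexandrov identity $\det D^2 v(E) = |\partial v(E)|$ and pitting it against the singular form of the right-hand side in (\ref{3.11}). Suppose $v_0(t) := v(0,t)$ is not strictly convex; then there exist $a < b$ in the domain of $v_0$ and a linear function $\ell(t) = \ell_0 + \ell_1 t$ with $v_0 \equiv \ell$ on $[a,b]$. Since $(a,b)$ is a nondegenerate interval, I may choose a sub-interval $[a',b'] \subset (a,b)$ with $0 \notin [a',b']$, and set $E := \{0\} \times [a',b']$.

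On one hand, (\ref{3.11}) directly yields
\[
\det D^2 v(E) = c \int_{a'}^{b'} |t|^{1+\alpha}\, dt > 0,
\]
since $c$ is a positive universal constant and $|t|^{1+\alpha}$ is continuous and strictly positive on $[a',b']$. On the other hand, I would show $|\partial v(E)| = 0$. Given any $t \in [a',b']$ and any subgradient $(p_1, p_2) \in \partial v(0,t)$, the supporting-plane inequality $v(0,s) \geq v(0,t) + p_2(s-t)$ combined with the affinity $v_0(s) - v_0(t) = \ell_1(s-t)$ on the neighborhood $[a,b]$ of $t$ forces $\ell_1(s-t) \geq p_2(s-t)$ for $s-t$ of either sign, hence $p_2 = \ell_1$. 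Therefore
\[
\partial v(E) \subseteq \mathbb{R} \times \{\ell_1\},
\]
a set of vanishing two-dimensional Lebesgue measure. This contradicts the Alexandrov identity together with the positivity above, so no such linear piece can exist and $v_0$ must be strictly convex.

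The main technical point that needs care is the justification of the Alexandrov interpretation of (\ref{3.11}): one must check that the weak limit of the Monge-Amp\`ere measures $\mu_{v_k}$ carried out just before the lemma statement really coincides with the Monge-Amp\`ere measure of $v$ in the sense $|\partial v(\cdot)|$. This is the standard weak continuity of the Monge-Amp\`ere operator under uniform convergence of convex functions and is directly available here from $v_k \to v$ uniformly. Once this foundational point is in hand, the proof reduces to the clean dimension count above: the affinity of $v_0$ on an open neighborhood traps the $x_2$-slope of every subgradient over $E$ at the single value $\ell_1$, squeezing $\partial v(E)$ into a horizontal line, while (\ref{3.11}) demands that it carry positive planar mass.
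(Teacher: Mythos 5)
Your proof is correct, but it takes a genuinely different route from the paper's. The paper argues by contradiction through the geometry of centered sections: after normalizing so that $v_0$ vanishes for $t\le 0$ and is positive for $t>0$, the one-dimensional sections $\{v_0<l_\eps\}$ become arbitrarily lopsided around the origin, while the doubling of $d\mathcal{H}^1_{\{x_1=0\}}$ with respect to sections whose center of mass lies on that line forces them to be balanced --- the same machinery (Lemma \ref{bs}, Theorem \ref{caf1}) that drives the rest of Section \ref{sec3}. Your argument instead works directly with the Alexandrov definition $\mu_v(E)=|\partial v(E)|$: the affine behavior of $v_0$ on a neighborhood of each $t\in[a',b']$ pins the second component of every subgradient at $\ell_1$, so $\partial v(E)$ sits inside a line and has zero planar measure, contradicting the positive mass that $(\ref{3.11})$ assigns to $E$. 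This is more elementary and self-contained, bypassing the doubling and balancing machinery entirely. The only points to make explicit are (i) that $[a',b']$ should be chosen so that $\{0\}\times[a',b']$ is compactly contained in the interior of $T^v_1$, so that the subgradients there are nonempty and the Monge--Amp\`ere measure of $E$ is defined, and (ii) the identification of $\mu_v$ with the right-hand side of $(\ref{3.11})$, which, as you note, follows from weak continuity of the Monge--Amp\`ere measure under uniform convergence together with uniqueness of weak limits. Note also that excluding $0$ from $[a',b']$ is a convenience rather than a necessity, since $|t|^{1+\alpha}$ is locally integrable and positive almost everywhere for $\alpha>-2$, so the integral over any nondegenerate interval is positive in any case.
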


\begin{proof}
 Assume that the conclusion does not hold. Then,
after subtracting a linear function, we can assume that

$$v \ge 0 \qquad \mbox{in $T^v_1$}$$
and
$$\quad v_0(t)=0 \quad \mbox{for $t\le 0$}, \quad \quad v_0(t)>0 \quad
\mbox{for $t> 0$}.$$
Let $$l_\eps:=\eps t + a_\eps $$ be such that

\begin{equation}{\label{3.14}}
0 \in \{v_0 <
l_\eps\}=(b_\eps, c_\eps) \rightarrow 0, \qquad \frac{c_\eps}{| b_\eps|}
\to 0 \quad \mbox{as $ \eps \to 0$}.
\end{equation}

\noindent
We consider the linear function $p_\eps$ in $\mathbb{R}^2$ such that $\{u
< p_\eps\}$ has center of mass on $\{x_1 = 0\}$ and $p_\eps=l_\eps$ on
$\{x_1 = 0\}$.

We claim that for $\eps$ small, $\{u< p_\eps\}$ is compactly included in
$T_1^v$. Otherwise, the graph of $v$ would contain a segment passing
through $0$, hence $v = 0$ in an open set which intersects the line $\{x_1
= 0\}$ and we contradict (\ref{3.11}).

Since $d\mathcal{H}^1_{\{x_1=0\}}$ is doubling with
respect to the center of mass of $\{u< p_\eps\}$, we conclude that this
set is also balanced around $0$ which contradicts (\ref{3.14}).
\end{proof}

\medskip

We are now in the position to exhibit the final steps of the proof
of Proposition \ref{p3.1} in the case $-2 < \alpha \leq -1$.

\medskip

\noindent {\it Proof of Proposition \ref{p3.1}:}
We choose $t_0$ small, such that $$T_{t_0} \subset \Omega.$$
The existence of $t_0$ follows from the fact that the graph of $u$
cannot contain any line segments.

From Lemma \ref{l3.2} we conclude that there exists a large
constant $K
> 0$ depending on the eccentricity of $T_{t_0}$ such
that

$$ T_t \sim A_t, \qquad \mbox{with $|A_t|\le K$
for all $t \le \delta t_0$}.$$

\

\noindent {\it Claim:} There exists $\gamma$ depending on $K$ such that $ S_{\gamma
t} \subset T_t.$

\medskip
\noindent  To show this, first observe that by  rescaling we can
assume that $t =1$. We use the compactness of the problem for
fixed $K$. If there exist a sequence $\gamma_k \to 0$ and
functions $u_k$ for which  the conclusion does not hold then, the
graph of the limiting function $u_\infty$ (of a subsequence of
$\{u_k\}$)  contains a line segment. This is a contradiction since
$u_\infty$ solves the Monge-Amp\'ere equation (\ref{3.eq}), which
proves the claim.

\noindent If $t=1$, then from simple geometrical considerations and the claim
above we obtain
$$\gamma k_0^{-1}K^{-1} B_1 \subset S_{\gamma} \subset k_0 K B_1.$$

\noindent By rescaling, we find that $S_t$ has bounded
eccentricity for $t$ small, and the proposition is proved.

\qed

\

\section{Homogenous solutions and blowup limits}{\label{sec4}}

We will consider in this section  homogenous solutions of the equation

$$\det D^2 w (x)=|x|^\alpha \qquad \mbox {in
$\mathbb{R}^2$}$$
for $ \alpha > -2$, namely solutions of the form

$$w(x)=r^{2+ \alpha /2}g (\theta):=r^\beta g, \qquad \beta = 2+ \alpha /2.$$
In the polar system of coordinates

$$ D^2 w (x)=r^{\beta-2}
\begin{pmatrix}
  \beta (\beta-1)g & (\beta-1)g' \\
  (\beta-1)g' & g''+\beta g
\end{pmatrix}.$$

\noindent Thus, the function $g$ satisfies the following ODE

\begin{equation}{\label{4.1}}
 \beta g (g''+ \beta g) - (\beta -1) (g')^2=1/(\beta-1).
\end{equation}

\noindent We consider $g$ as the new variable in a maximal interval $[a,b]$ where
$g$ is increasing,  and define $h$ on $[g(a),g(b)]$ as

$$g'=\sqrt{2h(g)}.$$
We have $$g''=h'(g)$$ thus $h$ satisfies

$$\beta t \, (h'(t) + \beta t)- 2(\beta -1)\, h(t)=1/(\beta -1).$$

\noindent Solving for $h$ we obtain
\begin{equation}{\label{h}}
2\, h_c(t)=c\, t^{2(1-\frac{1}{\beta})}-\beta^2t^2-\frac{1}{(\beta
-1)^2}
\end{equation}
for some $c$ positive.

\noindent The function $g$ on $[a,b]$ is the inverse of

$$a + \int_{g(a)}^\xi \frac{1}{\sqrt{2h_c(t)}}dt$$
and the length of the interval $[a,b]$ is given by

\begin{equation}{\label{4.2}}
b-a=\int_{\{h_c>0\}} \frac{1}{\sqrt{2h_c(t)}}dt:=I_c.
\end{equation}

\noindent Solutions of (\ref{4.1}) are periodic, of period $2(b-a)$, thus
a global solution $g$ on the circle exists if and only if $I_c$ equals
$\pi / k$,  for some integer $k$. Next we investigate the existence of such
solutions.

First we notice that for any quadratic polynomial $f(s)=-l^2\, s^2 + d_1\, s+d_2$ of opening $-2\, l^2$,  we have

\begin{equation}{\label{4.3}}
\int_{\{f>0\}} \frac{1}{\sqrt{f(s)}}ds=\frac{\pi}{l}.
\end{equation}

\medskip
\noindent
Therefore if $\phi(s)$ denotes  any  convex function which intersects the parabola
$l^2 s^2$ at two points, and we set $f(s) =- l^2 s^2+d_1 s+d_2$, with
 $d_1 s+d_2$ denoting  the
line through  the intersection points between $\phi(s)$ and $ l^2 s^2$, then

\begin{equation*}
\int_{\{ \phi(s) - l^2 s^2  >0\}} \frac{1}{\sqrt{\phi(s) - l^2 s^2
}}\, ds \geq \int_{\{f>0\}} \frac{1}{\sqrt{f(s) }}\, ds =
\frac{\pi}{l}.
\end{equation*}

\medskip

\noindent If $\phi(s)$ is concave we obtain the opposite inequality.

\medskip
\noindent  Applying the above to $h_c(s)$, we find that  depending on the convexity of the first term in (\ref{h}), we
obtain that the integral $I_c$ in (\ref{4.2}) is less (or greater) than $\pi / \beta$
for   $\beta<2$ (or $\beta>2$), i.e.,
\begin{equation}\label{eqn-beta1}
I_c <  \frac \pi \beta, \quad \mbox{if}\,\,  \beta <2 \qquad \mbox{and} \qquad I_c >  \frac \pi \beta, \quad \mbox{if} \,\, \beta >2.
\end{equation}

\noindent On the other hand, by performing the change of variable
$$t=s^\frac{\beta}{2}$$
in the integral (\ref{4.2}) we obtain the integral (\ref{4.3}) with

$$f(s):= c_1 s-4s^2-c_2s^{2-\beta}$$

\noindent for some positive constants $c_1$, $c_2$ depending on $c$. Hence,
depending  on
the convexity of the last term of $f$, the integral $I_c$ is greater (or less) than  $\pi / 2$
for  $\beta <2$ (or $\beta>2$), i.e.,
\begin{equation}\label{eqn-beta2}
I_c >   \frac \pi 2, \quad \mbox{if}\,\,  \beta <2 \qquad \mbox{and} \qquad I_c <  \frac \pi 2 \quad \mbox{if} \,\, \beta >2.
\end{equation}

\medskip
Let $- 2 < \alpha < 0$, or equivalently  $1< \beta < 2$.   It follows from \eqref{eqn-beta1} and \eqref{eqn-beta2} that $\pi/2 < I_c < \pi/\beta$, hence $I_c = \pi/k$, for
an integral $k$ only when $k=1$. This readily implies that the only homogeneous solution
in this case is the radial one.

\medskip
Assume next that $\alpha >0$. We will show next that in this case, depending on the value
of $\beta$,  more homogeneous solutions may exist.

\noindent  To this end, denote by $c_0=c_0(\alpha)$ the value of $c$ for which the two
functions

$$f_1(t)=c\, t^{2(1-\frac{1}{\beta})} \quad
\mbox {and} \quad  f_2(t)= \beta^2 t^2 + \frac{1}{(\beta-1)^2}$$

\noindent become tangent. When $c < c_0$, then the set were $h_c(t) >0$
is empty. As $c \to c_0^+$ the set $\{ t: \, h_c(t) >0\}$ approaches the point
$t_0$ at which the two functions $f_1(t)$ and $f_2(t)$  become tangent when $c=c_0$. Since $f'_1(t_0)= f'_2(t_0)$ when $c=c_0$, the point $t_0$ satisfies
$$2  c\, (1-\frac 1\beta) \, t_0^{1-\frac 2\beta} = 2\beta^2 t_0$$
which implies that
$$c\, (1-\frac 1\beta) \, t_0^{-\frac 2\beta} =\beta^2.$$

\noindent As  $c \to c_0^+$, $f_1(t)$ behaves as its Taylor  quadratic
polynomial, namely
$$f_1(t) \approx f(t_0) + f'(t_0)\, t^2 + \frac{f''(t_0)}{2}\, t^2$$
and
$$\frac{f''(t_0)}2=  c \, (1-\frac 1\beta) \, (1-\frac 2\beta) \, t_0^{-\frac 2\beta} =
\beta^2 (1-\frac 2\beta).$$

\noindent We conclude that,  as $c \to c_0^+$, $(h_c)^+$ behaves as a quadratic  polynomial
of opening $-4\beta$, and thus $I_c$ converges to
$\pi/\sqrt{2 \beta}$. Hence, $(\frac{\pi}{\sqrt{2\beta}},\frac{\pi}{2}) \subset \{I_c, \,\,  c >c_0
\}$ and also $ \{I_c, \,\, c >c_0
\} \subset (\frac{\pi}{\beta},\frac{\pi}{2})$, by \eqref{eqn-beta1}, \eqref{eqn-beta2}.

\medskip
Summarizing  the discussion above yields:

\begin{prop}{\label{p4.1}}
Homogenous solutions to $(\ref{eq})$ are periodic on the unit
circle.

\noindent i.  If $-2<\alpha<0$,  then the only homogenous solution is the radial one.

\noindent ii.  If $\alpha>0$, then there
exists a homogenous solution of principal period $2\pi /k$ if and only if
 $$ \frac{\pi}{k} \in \{I_c, \,\,  c>c_0(\alpha) \}.$$
In addition,
$$ (\frac{\pi}{\sqrt{2\beta}},\frac{\pi}{2}) \subset \{I_c, \,\, c >c_0
\}
\subset (\frac{\pi}{\beta},\frac{\pi}{2})$$
with $\beta = 2 + \alpha/2$.
\end{prop}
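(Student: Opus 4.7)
The plan is to reorganise the preceding analysis into three steps: reduction of \eqref{4.1} to the quadrature $h_c$, a monodromy criterion identifying which $c$ give globally defined $g$, and an asymptotic analysis of $I_c$ at the two ends of the parameter interval $(c_0,\infty)$. First I would make explicit that on every maximal monotone arc $[a,b]$ of $g$ the substitution $g'(\theta)^2=2h(g(\theta))$ converts \eqref{4.1} into a first-order linear ODE for $h$, whose general positive solution is the family $h_c$ in \eqref{h} with $c>0$ the constant of integration; the points $g(a)$ and $g(b)$ are consecutive zeros of $h_c$ at which $g'$ vanishes, so reflection across the turning points extends the arc to a $C^2$ solution of \eqref{4.1} on $\R$ of minimal period $2I_c$. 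Consequently a homogeneous solution $w=r^\beta g(\theta)$ is globally defined on $\R^2$ precisely when $g$ is $2\pi$-periodic; excluding the degenerate case $\{h_c>0\}=\emptyset$, which corresponds exactly to the radial solution $g\equiv t_0$ at $c=c_0$, this happens iff $2\pi=2kI_c$ for some positive integer $k$, equivalently $I_c=\pi/k$, with principal period $2\pi/k$.

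For part (i), where $1<\beta<2$, combining the comparison bounds \eqref{eqn-beta1}--\eqref{eqn-beta2} that were already derived via the quadratic-integral identity \eqref{4.3} gives the chain $\pi/2<I_c<\pi/\beta<\pi$ for every $c>c_0$. Since the open interval $(\pi/2,\pi)$ contains no number of the form $\pi/k$ with $k$ a positive integer, the monodromy criterion rules out every non-radial homogeneous solution, and only the radial one survives.

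For part (ii), where $\beta>2$, the same estimates reversed yield the outer inclusion $\{I_c:c>c_0\}\subset(\pi/\beta,\pi/2)$, which together with the monodromy criterion gives the ``only if'' direction of the iff characterisation. For the inner inclusion I would first note that $c\mapsto I_c$ is continuous on $(c_0,\infty)$, and then establish the two endpoint limits sketched in the preceding discussion: as $c\downarrow c_0$ the set $\{h_c>0\}$ shrinks to the tangency point $t_0$, the Taylor expansion of $f_1(t)=ct^{2(1-1/\beta)}$ matches $f_2$ to first order and has $f_1''(t_0)-f_2''(t_0)=-4\beta$, so $2h_c$ is asymptotically a concave parabola of opening $-4\beta$ and \eqref{4.3} yields $I_c\to\pi/\sqrt{2\beta}$; while after the substitution $t=s^{\beta/2}$ used in the text the integral becomes \eqref{4.3} applied to $\tilde f(s)=c_1 s-4s^2-c_2 s^{2-\beta}$, and as $c\to\infty$ the perturbation $c_2 s^{2-\beta}$ becomes negligible on the rescaled support, so $I_c\to\pi/2$. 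The intermediate-value theorem then supplies every $\pi/k$ in $(\pi/\sqrt{2\beta},\pi/2)$ as a value of $I_c$, giving the ``if'' direction together with the inner inclusion.

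The main technical point is making the asymptotic $I_c\to\pi/\sqrt{2\beta}$ precise: the integrand in \eqref{4.2} blows up at the endpoints of $\{h_c>0\}$, so controlling the contribution of the higher-order Taylor remainder of $f_1$ at $t_0$ requires the rescaling $t-t_0=\sqrt{c-c_0}\,\tau$ together with a uniform estimate of that remainder on the rescaled support. All other steps are standard.
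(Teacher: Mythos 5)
Your proposal is correct and follows essentially the same route as the paper: the quadrature $g'^2=2h_c(g)$, the monodromy criterion $I_c=\pi/k$, the convexity comparisons against \eqref{4.3} giving \eqref{eqn-beta1}--\eqref{eqn-beta2}, and the tangency asymptotic $I_c\to\pi/\sqrt{2\beta}$ as $c\downarrow c_0$. You additionally make explicit a few points the paper leaves implicit — the continuity of $c\mapsto I_c$, the limit $I_c\to\pi/2$ as $c\to\infty$, the intermediate value argument for the inner inclusion, and the identification of $c=c_0$ with the radial solution — all of which are correct completions rather than a different method.
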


\medskip

Using the proposition above, we will now prove Theorem \ref{t4}. We begin
with two useful remarks.

\begin{rem}
From (\ref{h}) we see that any point in the positive quadrant can
be written as $(t,\sqrt{2h_c})$ for a suitable $c$. Hence, given
any point $x_0\in
\partial B_1$ and any positive
symmetric unimodular  matrix
$A$, there exists a homogenous solution $w$ in a neighborhood of
$x_0$ such that $D^2w(x_0)=A$.
\end{rem}

\begin{rem} Equation  (\ref{h}) gives
$$[(h'+\beta t) + \beta
(\beta-1)t]\, t^{\frac{2}{\beta}-1}=c(1-\frac{1}{\beta})$$
hence
$$\Delta w \, [r^2 w_{rr}]^{\frac{2}{\beta}-1}$$
is constant for any local homogenous solution $w$.  This
quantity  will play a crucial role in the proof of Theorem \ref{t4}.
\end{rem}

\begin{defn}
For any solution $u$ of equation \eqref{eq}, we define
$$J_u(x):= (\Delta  u) (r^2 u_{rr})^\gamma, \qquad
\gamma:=\frac{2}{\beta}-1.$$
\end{defn}

\begin{rem}
The  quantity $J_u(x)$ remains invariant under the homogenous scaling
$$v(x)=r^{-\beta}u(r x), \quad J_v(x)=J_u(r x).$$
We denote by $J_0$ the constant obtained when we evaluate $J$ on the
radial solution $u_0$ of (\ref{eq}).
\end{rem}

\begin{prop}{\label{p4.2}} The function
$$ |J_u-J_0|$$
cannot have an interior maximum in $\Omega \setminus \{0\}$ unless it is
constant.
\end{prop}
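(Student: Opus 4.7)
The strategy is to show that $J_u - J_0$ satisfies a linear second-order equation without zeroth-order term on the smooth region $\Omega \setminus \{0\}$, and then invoke the strong maximum principle on both $J_u - J_0$ and $J_0 - J_u$. If $|J_u - J_0|$ achieves an interior maximum at some point $p$, then either $J_u - J_0$ or its negative attains an interior maximum there, and the strong maximum principle applied to the corresponding signed quantity forces it (hence also $|J_u - J_0|$) to be constant on the connected open set $\Omega \setminus \{0\}$.

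To derive the PDE, the natural tool is the linearized Monge-Amp\`ere operator
\begin{equation*}
\mathcal{M} := U^{ij}\, \partial_{ij},
\end{equation*}
where $U^{ij}$ denotes the cofactor matrix of $D^2 u$. On $\Omega \setminus \{0\}$ the function $u$ is smooth and strictly convex with $\det D^2 u = |x|^\alpha > 0$, so $U^{ij}$ is positive definite and $\mathcal{M}$ is uniformly elliptic on every compact subset. Differentiating the equation once gives
\begin{equation*}
U^{ij}\, u_{ijk} = \alpha\, |x|^{\alpha - 2}\, x_k,
\end{equation*}
and differentiating twice controls the fourth-order contractions that appear when $\mathcal{M}$ is applied to
\begin{equation*}
\Delta u = u_{ii}, \qquad r^2 u_{rr} = x_i x_j\, u_{ij}.
\end{equation*}
Expanding $\mathcal{M}(J_u) = \mathcal{M}\bigl((\Delta u)(r^2 u_{rr})^\gamma\bigr)$ by the product and chain rules, substituting these identities, and collecting the explicit $|x|^{\alpha - 2}$ factors back against derivatives of $\Delta u$ and $r^2 u_{rr}$, the goal is an identity of the form
\begin{equation*}
\mathcal{M}(J_u) = b^k(x)\, \partial_k J_u
\end{equation*}
with $b^k$ smooth on $\Omega \setminus \{0\}$.

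The reason such a cancellation should succeed is homogeneity: as observed in the remark preceding the definition of $J_u$, the exponent $\gamma = 2/\beta - 1$ is chosen exactly so that $J_u$ is invariant under the rescaling $u(x) \mapsto \lambda^{-\beta} u(\lambda x)$ preserving the equation, and $J_u$ is constant on every local homogeneous solution by the integration leading to \eqref{4.1}. Any natural linear operator built from $\mathcal{M}$ and acting on $J_u$ must therefore vanish on these constants, which is what forces the zeroth-order contribution in the computation to drop out after regrouping; the relations $\gamma = 2/\beta - 1$ and $\alpha = 2\beta - 4$ are precisely what make the algebra close.

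The main difficulty is the bookkeeping in the expansion of $\mathcal{M}(J_u)$. Because $(r^2 u_{rr})^\gamma$ mixes the Hessian with the position vector, and because $\mathcal{M}$ has $x$-dependent coefficients coming from $D^2 u$, the third-order cross terms in $U^{kl}(\Delta u)_k\, [(r^2 u_{rr})^\gamma]_l$ and the fourth-order terms in $\mathcal{M}\bigl((r^2 u_{rr})^\gamma\bigr)$ must be matched carefully against the twice-differentiated form of $\det D^2 u = |x|^\alpha$, and the homogeneity bookkeeping is the step that is easiest to mistake. Once the identity $\mathcal{M}(J_u) - b^k \partial_k J_u = 0$ is established, the uniform ellipticity of $\mathcal{M}$ on compact subsets of $\Omega \setminus \{0\}$ together with the smoothness of $b^k$ there lets the classical strong maximum principle conclude the argument.
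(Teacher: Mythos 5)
The central step of your argument --- the identity $\mathcal{M}(J_u)=b^k\partial_k J_u$ with $b^k$ smooth on all of $\Omega\setminus\{0\}$ --- is asserted but never established, and the homogeneity heuristic you offer does not deliver it. Knowing that $J$ is constant on every local homogeneous solution tells you only that the quantity $H(D^3u,D^2u,x):=u^{ij}M_{ij}$ (with $M=\log J_u$) vanishes whenever the $3$-jet of $u$ at $x$ agrees with that of a homogeneous solution; it does not tell you that $H$ is a combination of first derivatives of $J_u$ with bounded coefficients. To pass from the one statement to the other you must solve for $D^3u(x)-D^3w(x)$ (where $w$ is the local homogeneous solution with $D^2w(x)=D^2u(x)$) in terms of $\nabla M$, using the once-differentiated equation together with the two identities expressing $M_k$ through third derivatives. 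In the paper this is a $4\times4$ linear system whose determinant, in normalized coordinates, equals $4\gamma^2(x_1x_2/u_{rr})^2+(b_1-b_2)^2$, and this determinant vanishes exactly at points where $J_u=J_0$. Consequently the bound $\|D^3u-D^3w\|\le C|\nabla M|$, and with it the differential inequality $|u^{ij}M_{ij}|\le C(x)\left(|\nabla M|+|\nabla M|^2\right)$, is available only on the open set $\{J_u\ne J_0\}$, with constants degenerating as one approaches $\{J_u=J_0\}$. This is precisely why the proposition is stated for $|J_u-J_0|$ rather than for $J_u$ itself: a positive interior maximum of $|J_u-J_0|$ necessarily lies in $\{J_u\ne J_0\}$, where the inequality holds and the strong maximum principle applies. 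Your proposed identity, if true with smooth coefficients everywhere, would prove the stronger statement that $J_u$ has no interior extrema at all; the degeneration of the linear system on $\{J_u=J_0\}$ is a concrete obstruction to obtaining such coefficients, and nothing in your outline addresses it.

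A secondary point: even where the argument does close, the dependence of $H$ on $D^3u$ is quadratic, so what one obtains is a differential inequality with a $|\nabla M|^2$ term rather than a linear drift equation. That is still sufficient for the strong maximum principle (the gradient is locally bounded near an interior extremum), but it is a further sign that the clean linear identity you posit is not what the computation produces. To repair the proof you should carry out the comparison with the osculating homogeneous solution explicitly, verify the invertibility of the linear system away from $\{J_u=J_0\}$, and then run the maximum principle only on that set.
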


\begin{proof} We compute the linearized operator $u^{ij}M_{ij}$ for
$$M=
\log J_u= \log (\Delta u) + \gamma \log (x_i x_j u_{ij})$$
at a point
$x \in \Omega \setminus \{0\}$ where $J_u(x) \ne J_0$.

By choosing an appropriate system of coordinates and by rescaling,
we can assume that $|x|=1$ and $D^2u$ is diagonal. By differentiating the
equation \eqref{eq} twice we obtain
\begin{equation}{\label{4.4}}
u^{ii}u_{kii}= \alpha x_k
\end{equation}
and
\begin{equation}{\nonumber}
u^{ii}u_{klii}= u^{ii}u^{jj}u_{kij}u_{lij} + \alpha (\delta^l_k -2
x_kx_l).
\end{equation}

Since the linearized equation of each second derivative of $u$ depends
on $D^3u$, $D^2u$ and  $x$ we see that
\begin{equation}\label{eqn-mij}
u^{ij}M_{ij}=H(D^3u,D^2u,x)
\end{equation}
where $H$ is a quadratic polynomial in $D^3u$ for fixed $D^2u>0$ and
$x$.

Let $w$ denote the (local) homogenous solution for which
$D^2u(x)=D^2w(x)$.
Since $M_w=\log J_w$ is constant, we have
$$H(D^3w,D^2w, \cdot )=0$$
in a neighborhood of $x$.

\medskip

\noindent{\em Claim.} We have
$$\|D^3u(x)-D^3w(x)\| \le C|\nabla M|$$
with the constant $C$ depending on $D^2 u$ and $x$.

\medskip

\noindent {\em Proof of Claim.} \, From (\ref{4.4}) and the following equalities

$$ M_k= \frac{u_{iik}}{\Delta
u}+ \gamma \frac{x_ix_ju_{ijk}+2x_iu_{ik}}{x_ix_ju_{ij}}$$
we obtain the following system for the third
derivatives of $u$,

$$\begin{pmatrix}

1 & 0 & 1 & 0 \\

0 & 1 & 0 & 1 \\

b_1 & d_1 &   b_2 & 0  \\

0& b_1 & d_2 & b_2
\end{pmatrix}
\begin{pmatrix}
\frac{u_{111}}{u_{11}}\\

\frac{u_{112}}{u_{11}}\\

\frac{u_{221}}{u_{22}}\\

\frac{u_{222}}{u_{22}}
\end{pmatrix} = \begin{pmatrix}
\alpha x_1\\

\alpha x_2\\

M_1-2\gamma \frac{x_1u_{11}}{u_{rr}}\\

M_2-2\gamma \frac{x_2u_{22}}{u_{rr}}

\end{pmatrix}
$$

and

$$b_i=\frac{u_{ii}}{\Delta u}+\gamma \frac{x_i^2u_{ii}}{u_{rr}}, \quad
\quad  d_i=2 \gamma u_{ii} \frac{x_1x_2}{u_{rr}}. $$

\medskip
\noindent The third order derivatives of $w$ solve the same system but with no dependence
on $M$ in the right hand
side vector (since the corresponding $M$ for $w$ is constant).

It is enough to show that the determinant of the third order derivatives coefficient matrix above is positive. This  determinant  is equal to

$$d_1d_2+(b_1-b_2)^2=4\gamma^2(\frac{x_1x_2}{u_{rr}})^2+ (b_1-b_2)^2$$
and can vanish only if one of the coordinates, say $x_2=0$, and $b_1=b_2$,
i.e.

$$u^2_{11}= \frac{1-\gamma}{1+ \gamma}=\beta-1.$$
This implies that $J(x)=J_0$ which is a contradiction. Thus, the
determinant is positive and the claim is proved.

\medskip

Since $H$ depends quadratically on $D^3u$ and $D^2u=D^2w$ at $x$,
the claim above implies that

\begin{equation*}
\begin{split}
|H(D^3u,D^2u,x)| &=|H(D^3u,D^2u,x)-H(D^3w,D^2u,x)| \\
& \le C(x,D^2u) (|\nabla M| +
|\nabla M|^2).
\end{split}
\end{equation*}

\noindent  Hence,  \eqref{eqn-mij}  implies that on the set where $J(x) \ne J_0$  there exists a smooth function
$C(x)$ depending on $u$ such that

$$ |u^{ij}M_{ij}| \le C(x) (|\nabla M| +
|\nabla M|^2).$$

\medskip
From the strong maximum principle, we conclude that $M$ cannot
have a local maximum or minimum in this set unless it is constant.
With this the Proposition is proved.

\end{proof}

Theorem \ref{t4} will follow from   the proposition below.

\begin{prop}{\label{p4.3}}
Suppose that $u$ is a  solution $u$ of (\ref{eq}), with $\alpha > -2$,  which satisfies
\begin{equation}{\label{4.rad}}
 c|x|^\beta \le u(x) \le C |x|^ \beta, \qquad \beta = 2+\alpha /2.
\end{equation}
Then the limit
 $$J_u(0):=\lim_{x \to 0} J_u(x)$$

\noindent exists. Moreover,  if for a sequence of $r_k \to 0$ the blow up solutions
$$v_{r_k}:=r_k^{- \beta} u (r_k x)$$
converge uniformly on compact sets to the solution $w$, then $w$ is homogenous of degree $\beta$ with $J_w=J_u(0)$.
\end{prop}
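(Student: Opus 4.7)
The plan is to prove Proposition \ref{p4.3} in four interlocking steps that use the maximum principle of Proposition \ref{p4.2} to control $J_u$ near the origin and then propagate constancy to the blow-up.

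I begin by observing that the rescalings $v_r(x):=r^{-\beta}u(rx)$ satisfy $\det D^2 v_r=|x|^\alpha$ and, by \eqref{4.rad}, $c|x|^\beta\le v_r(x)\le C|x|^\beta$. Caffarelli's interior $C^{2,\delta}$ regularity on compacta of $\mathbb{R}^2\setminus\{0\}$ (where the equation is uniformly non-degenerate) gives uniform $C^{2,\delta}$ bounds, which in particular bound $J_u$ in a punctured neighborhood of $0$ via the scale identity $J_{v_r}(x)=J_u(rx)$, and upgrade any uniform-on-compacta convergence $v_{r_k}\to w$ to $C^{2,\delta}_{\mathrm{loc}}$ convergence. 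Next, define $\overline{\Phi}(r):=\sup_{|x|=r}(J_u-J_0)^+$ and $\underline{\Phi}(r):=\sup_{|x|=r}(J_u-J_0)^-$. From Proposition \ref{p4.2}, on any annulus the positive part $(J_u-J_0)^+$ cannot attain a strict interior maximum (a strict interior max would, by the strong maximum principle in the proof of Proposition \ref{p4.2}, force $J_u$ locally constant $>J_0$; a connected-component argument then shows this component touches the annulus boundary), so
\[
\sup_{r_1\le|x|\le r_2}(J_u-J_0)^+=\max\bigl(\overline{\Phi}(r_1),\overline{\Phi}(r_2)\bigr),
\]
i.e.\ $\overline{\Phi}$ has no strict interior local maximum as a function of $r$. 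For a bounded continuous function with this property, $\limsup_{r\to 0}$ must equal $\liminf_{r\to 0}$ (otherwise, interlacing sequences $r_k,s_k\to 0$ with values above and below an intermediate level produce a strict peak on a sub-interval, contradicting the property). Hence $\overline{\Phi}(r)\to L^+$ and similarly $\underline{\Phi}(r)\to L^-$ as $r\to 0$.

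For the blow-up, given $v_{r_k}\to w$ in $C^{2,\delta}_{\mathrm{loc}}$, the identity $J_{v_{r_k}}(x)=J_u(r_kx)$ combined with the limits above yields $\sup_{|x|=\rho}(J_w-J_0)^+=L^+$ and $\sup_{|x|=\rho}(J_w-J_0)^-=L^-$ for every $\rho>0$. If $L^+>0$, then $(J_w-J_0)^+$ attains its sup $L^+$ on every circle, so inside a slightly larger annulus the sup must be attained at an interior point; the strong maximum principle from Proposition \ref{p4.2} then forces $J_w\equiv J_0+L^+$ globally on $\mathbb{R}^2\setminus\{0\}$ and $L^-=0$. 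The symmetric argument handles $L^->0$, and in all cases $J_w\equiv c_w$ for a constant $c_w$ depending only on $L^\pm$, hence independent of the subsequence. I set $J_u(0):=c_w$, which makes the first claim of the proposition precise.

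Finally, to show $w$ is homogeneous of degree $\beta$, fix $x_0$ with $|x_0|=1$; then $D^2w(x_0)$ is positive symmetric and unimodular, so the first remark after Proposition \ref{p4.1} produces a local homogeneous solution $w^*$ with $D^2w^*(x_0)=D^2w(x_0)$, and then $J_{w^*}\equiv J_{w^*}(x_0)=c_w$. Both $w$ and $w^*$ solve the overdetermined system $\det D^2=|x|^\alpha$, $J\equiv c_w$. The proof of Proposition \ref{p4.2} exhibits a linear system expressing $D^3$ in terms of $D^2$, $x$, and $\nabla M$ whose determinant is non-vanishing precisely when $J\neq J_0$; with $\nabla M\equiv 0$ for both $w$ and $w^*$, this forces $D^3w(x_0)=D^3w^*(x_0)$, and inductive differentiation of the equation forces agreement of all higher Taylor coefficients at $x_0$. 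Real-analyticity of solutions to the non-degenerate Monge-Amp\'ere equation away from $0$ then gives $w=w^*$ near $x_0$, and connectedness of $\mathbb{R}^2\setminus\{0\}$ extends this to a global identity. The main obstacle is the borderline case $c_w=J_0$, where the determinant of the $D^3$-system degenerates and the Taylor-matching argument breaks down; there I would argue separately that $w$ must equal the radial solution $u_0$, for instance by comparing $w$ with $u_0$ via a max principle on large annuli and using the scaling invariance together with the two-sided bound $c|x|^\beta\le w\le C|x|^\beta$ to pin down the multiplicative constant.
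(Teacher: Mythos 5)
Your first half (existence of $\lim_{x\to 0}J_u$) is essentially the paper's argument in different packaging: both rest on the annulus maximum principle extracted from the proof of Proposition \ref{p4.2} together with blow-up compactness, and your functions $\overline\Phi,\underline\Phi$ are a clean way to organize it. You should still add the final compactness step that converts ``every blow-up limit has $J\equiv c_w$ with the same $c_w$'' into ``$J_u(x_i)\to c_w$ for every sequence $x_i\to 0$'' (blow up at scale $|x_i|$ and evaluate at $x_i/|x_i|$): as written you only \emph{define} $J_u(0):=c_w$, and the convergence $\overline\Phi(r)\to L^+$, $\underline\Phi(r)\to L^-$ by itself does not control $\inf_{|x|=r}J_u$, so the first claim of the proposition is not yet proved at the point where you declare it.

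The genuine gap is in the homogeneity step. After matching $D^2w(x_0)=D^2w^*(x_0)$ and $D^3w(x_0)=D^3w^*(x_0)$ you assert that ``inductive differentiation of the equation forces agreement of all higher Taylor coefficients'' and then invoke analyticity to conclude $w=w^*$ near $x_0$. The order-$3$ system is a square $4\times 4$ system whose determinant the paper computes; for order $m\ge 4$ the analogous system (obtained by differentiating $\det D^2w=|x|^\alpha$ and $J_w=\mathrm{const}$ repeatedly) is an overdetermined $(2m-2)\times(m+1)$ system whose rank you never verify, so the induction does not go through as stated. It is also unnecessary: all one needs from $D^3w(x_0)=D^3w^*(x_0)$ is that the Euler identity $x\cdot\nabla w_{ij}=(\beta-2)\,w_{ij}$, which the homogeneous $w^*$ satisfies identically, is inherited by $w$ at $x_0$; since $x_0$ is arbitrary, this first-order identity along rays already makes $D^2w$ homogeneous of degree $\beta-2$ and hence $w$ homogeneous of degree $\beta$. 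Finally, your treatment of the borderline case $c_w=J_0$ (``argue separately that $w=u_0$'' by an unexecuted comparison) is not a proof, and the claim is both stronger than needed and not obviously true, since non-radial homogeneous profiles are not excluded a priori at that value of $J$. The determinant of the order-$3$ system vanishes only where, in addition to $J=J_0$, the Hessian of $w$ coincides with that of the radial profile; the correct fix is to run the third-derivative argument on the open set where $D^2w\ne D^2u_0$, pass to its closure by continuity, and note that the Euler identity holds trivially on the interior of the complement --- which is exactly the paper's $\Gamma$-set argument.
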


\begin{proof} From (\ref{4.rad}) we find that as $x \to 0$, $J_u(x)$ is
bounded away from
$0$ and $\infty$ by constants depending on $c$, $C$.
We will first show that $\lim_{x \to 0} J_u(x) =J(0)$ exists.

We may assume, without loss of
generality, that

$$ \limsup_{x \to 0} J_u(x):=k > J_0.$$

\noindent Let $x_i$ be a sequence of points for which $\limsup$ is achieved. The
blow up solutions $v_{r_i}$, $r_i=|x_i|$,  have a subsequence which converges uniformly
on compact sets of $\mathbb{R}^2$ to a solution $v$. Moreover, there
exists a point $y$ on the unit circle for which

$$J_v(y) = k \ge \limsup_{x \to 0} J_v,$$
hence, by Proposition \ref{p4.2}, $J_v$ is constant.

This argument also shows that if

$$J_u(z) \le k - \varepsilon \quad \mbox{then  $J_u(x) \le k -
\delta(\eps)$ on the circle $|x|=|z|$}.$$

\noindent Thus, if there exists a sequence of points $y_j \to 0$ with

$$ \lim_{y_j \to 0}  J_u(y_j) < k$$
then $J_u$ would have an interior maximum in  the annulus    $\{ x: \, |y_j| \le |x| \le
|y_{j'}|\}$ that contains one  of the points $x_i$ given above, a contradiction.
This shows that $\lim_{x \to 0} J_u(x)$ exists.

It remains to prove that if $J_v$ is constant,  then $v$ is
homogenous. It suffices to show that $D^2v$ is homogenous of degree
$\beta-2$, or more precisely that for each second derivative $v_{ij}$, we have
\begin{equation}\label{eqn-wii}
x \cdot \nabla v_{ij}= (\beta-2)\, v_{ij}.
\end{equation}

\noindent To this end,  for a fixed point $x$,  we
consider the homogenous solution $w$ with $D^2w(x)=D^2v(x)$. Since

$$\nabla J_v(x)=\nabla J_w(x)=0$$
the third derivatives of $v$ and $w$
solve the same system. We have seen in the proof of Proposition \ref{p4.2} that this system is solvable provided $ J_v  \neq J_0$.
Thus $D^3v(x)=D^3w(x)$ if $J_v \ne J_0$. Since  \eqref{eqn-wii}
is obviously true   for $w$, this implies that the equality holds for $u$ as well.

If $J_v=J_0$ we denote by $\Gamma$ the set where $D^2u(x)$ does
not coincide with the hessian of the radial solution. From the
proof of Proposition \ref{p4.2} we still obtain $D^3v(x)=D^3w(x)$
if $x\in \Gamma$, and by continuity \eqref{eqn-wii}
 holds for $x \in \bar \Gamma$. If $x$ is in the open set $
\bar \Gamma^c$, then $D^2 v$ coincides with $D^2 u_0$ and \eqref{eqn-wii}  is
again satisfied. This finishes the proof of the proposition.

\end{proof}

\noindent{\em Proof of Theorem \ref{t4}.} The proof of the theorem  readily  follows from Propositions \ref{p3.1}, \ref{p4.1} and \ref{p4.3}. \qed

\section{Proof of Theorem \ref{t3}}{\label{sec5}}

We consider the Dirichlet problem

\begin{equation}\label{dc}
\begin{cases}
\det D^2 u=|x|^\alpha  \quad &\text{in $B_1$}\\
u=u_0- \eps \cos (2 \theta)  \quad &\text{on $\partial B_1$}
\end{cases}
\end{equation}
in the range of exponents $\alpha >0$. Here
$$u_0(x)=c_\alpha\, |x|^\beta, \qquad \beta = 2+\alpha/2 $$
denotes  the radial solution of  the equation, i.e,  $\det D^2 u_0=|x|^\alpha$.
We  write the solution as
\begin{equation}\label{eqn-ve}
u=u_0- \eps v.
\end{equation}
Heuristically, is $\epsilon$ is small $v$ satisfies the linearized equation
at $u_0$, namely

$$(D^2 u_0)^{-1} : D^2 v=0,$$
where we use the notation $A:B = \sum_{ij} a_{ij}\, b_{ij}$ for
the Frobenius inner product between two $n\times n$ matrices $A$
and $B$.

At any point $x_0 \in B_1$, we denote by $\nu$ and $\tau$ the unit
normal (radial) and unit tangential direction, respectively, to
the circle $|x|=|x_0|$ at $x_0$. In $(\nu,\tau)$ coordinates,
$$D^2 u_0 = c_\alpha r^{\beta-2} \begin{pmatrix}
  \beta (\beta-1)  & 0 \\
  0 & \beta
\end{pmatrix}$$
hence, $v$ satisfies
the equation
$$v_{\nu\nu}+ (\beta -1) v_{\tau\tau}=0.$$
Solving this equation with boundary data $v=\cos (2 \theta)$ we obtain
the solution
$$v=r^\rho \cos (2 \theta)$$
with
$$\rho (\rho -1) + (\beta -1)(\rho -4)=0.$$
Solving the quadratic equation with respect to $\rho$
gives
$$\rho = \frac{2-\beta \pm \sqrt{\beta^2 +12\, \beta - 12}}{2}.$$
Since $\beta:=2+\alpha/2>2$ the only acceptable solution is
$$\rho = \frac{2-\beta + \sqrt{\beta^2 +12\, \beta - 12}}{2}$$
and it satisfies
\begin{equation}\label{eqn-br}
2 < \rho < \beta
\end{equation}
which suggests that close to the origin the perturbation term
$\eps v$ dominates $u_0$.

\medskip

We wish to show that the solution $u$ of the Dirichlet problem
\eqref{dc} admits at the origin the non-radial behavior
\eqref{nonrad}, if $\eps \leq \eps_0$, with $\eps_0$ sufficiently
small. We will argue by contradiction. Assume,
 that $u$ has the radial behavior

$$c_0\, |x|^ \beta \le u(x) \le C_0 \, |x|^ \beta$$
with $c_0$, $C_0$ universal constants. By rescaling, we deduce that

$$c \, I \le |x|^ {2- \beta } D^2 u(x) \le C \, I$$
with $I$ denoting the identity matrix.

\medskip
The function $v$ which is defined by \eqref{eqn-ve} satisfies

$$|v| \le 1, \qquad v=\cos (2 \theta) \mbox{ on $\partial B_1$}$$
and solves the equation
$$a^{ij}v_{ij}=0$$
with
$$ A= (a^{ij})=
\int_0^1(tD^2u_0+(1-t)D^2u)^{-1}dt=\int_0^1(D^2u_0+\eps(t-1)D^2v)^{-1}dt.$$
Hence
\begin{equation}\label{eqn-Ar}
c \, I \le r^{ \beta -2} A \le C\, I.
\end{equation}

\noindent The solution  $u$ has bounded third order derivatives in $B_1
\setminus B_{1/2}$, thus

$$|D^2v(x)| \le C \, \|v\|_{L^\infty} \le C \qquad \mbox{
in $B_1\setminus B_{1/2}$}.$$

\noindent By rescaling we obtain the bound

$$|D^2v(x)| \le C |x|^{-2}.$$
From this we find that

$$r^{\beta -2} |A-D^2u_0^{-1}| \le C \eps r^{-\beta}$$
hence, $v$ satisfies the Dirichlet problem

\begin{equation}\label{dv}
\begin{cases}
f^{ij}v_{ij}=0  \quad &\text{in $B_1$}\\
v=\cos (2 \theta) \quad &\text{on $\partial B_1$}
\end{cases}
\end{equation}
with
$$F:=c\, r^{\beta -2}\, A $$
hence,  by \eqref{eqn-Ar},
$$c\, I \le F \le C\, I \quad \mbox{and} \quad
|F-F_0| \le C \eps r^{-\beta}$$
with
$$F_0:=\nu \otimes \nu + (\beta-1) \,
\tau \otimes \tau.$$
(As before, we denote  by $\nu$ and $\tau$ the unit normal (radial) and unit tangential directions,  to  the circle $|x|=|x_0|$ at each point $x_0\in B_1$).

\noindent Also,
$$ |v| \le 1, \quad  \mbox{ on $B_1$}.$$

\noindent From the definitions of $A$ and $F$ we also obtain

\begin{equation}{\label{5.1}}
\|\nabla(F-F_0)\| \le C(r_0)\,  \eps \qquad \mbox{ for $|x| \ge r_0$.}
\end{equation}
Set

$$w:=r^\rho \cos (2 \theta).$$
Then, $w$ satisfies the equation
$$ F_0:D^2w =0$$
thus, we have

$$|f^{ij}\, w_{ij}| \le C r^{\rho -2} \min \{\eps r^{- \beta},1 \}.$$
Applying the Aleksandrov maximum principle on $v-w$ (see  Theorem 9.1 in
\cite{GT}),  we find that

$$|v-w| \le C \, \eps^\delta$$
and therefore (see (\ref{5.1}))

\begin{equation}{\label{secbd}}
|D^2v-D^2w| \le C'(r_0) \, \eps^\delta,  \qquad \mbox{ for $|x| \ge r_0$}.
\end{equation}

\medskip

We next  compute

$$M_u(x):=  \log(\Delta u) + \gamma \log (r^2u_{rr}), \qquad \gamma:=\frac 2\beta -1$$

\noindent in terms of $M_{u_0}$, for $|x| \ge r_0$, with $r_0$ small, fixed.
We recall that $M_{u_0}$ is constant in $x$. Since  $u=u_0-\eps\, v$,
we find that

$$ M_u(x) = M_{u_0} - \eps \left (  \frac{\Delta v}{\Delta u_0} + \gamma
\, \frac{v_{rr}}{u_{0,rr}} \right ) - \frac{\eps^2}{2} \left (
(\frac{\Delta v}{\Delta u_0})^2 + \gamma \,
(\frac{v_{rr}}{u_{0,rr}})^2 \right ) + O(\eps^3). $$ Because
$$\det D^2u = \det D^2 u_0$$
the function $v$ satisfies the equation
$$ -u_{0,rr}\, v_{\tau \tau}-u_{0,\tau \tau} \, v_{rr} +\eps \det D^2 v=0$$
or equivalently (since $u_0(r)= c_\alpha\, r^\beta$)
$$ v_{rr}+(\beta -1)\, v_{\tau \tau}=\eps\, \frac{ r^{2- \beta}}{c_\alpha\, \beta}\,  \det D^2 v.$$
The last equality implies that
$$ \frac{\Delta v}{\Delta u_0} + \gamma
\frac{v_{rr}}{u_{0,rr}}=\eps \frac{r^{2(2- \beta)}}{c_\alpha^2 \,
\beta^3\, (\beta-1)} \det D^2v,$$

\noindent and also  that
\begin{equation*}
\begin{split}(\frac{\Delta v}{\Delta u_0})^2 + \gamma\,
(\frac{v_{rr}}{u_{0,rr}})^2&=(1+ \frac{1}{\gamma})\, (\frac{\Delta
v}{\Delta u_0})^2 + O(\eps)\\
&= -\frac{2r^{2(2- \beta)}}{ c_\alpha^2 \, \beta^2\, (\beta
-2)}(\Delta v)^2+ O(\eps).
\end{split}
\end{equation*}

\noindent From (\ref{secbd}) and the above we conclude  that

$$M_u(x)=  M_{u_0}  + \eps^2 r^{2(2-\beta)} \, [ a_1\, (-\det D^2w)+a_2\,
(\Delta w)^2 ] +O(\eps^{2+\delta})$$
for $|x| \geq r_0$, with $O(\eps^{2+\delta})$ depending on $r_0$.
The constants $a_1$ and $a_2$ are given by
$$a_1 = \frac{1}{c_\alpha^2 \, \beta^3\, (\beta-1)} \quad \mbox{and} \quad
a_2 = \frac{2}{c_\alpha^2 \, \beta^2\, (\beta -2)}.$$
\medskip

\noindent We recall that $w(r,\theta)=r^\rho \, \cos(2\theta)$.
Then, a direct computation shows that each term in the square
brackets above is positive. Thus the $\eps^2$ term is positive and
homogeneous of degree $2\, (\rho-\beta)$, with  $\rho < \beta$ (as
shown in \eqref{eqn-br}). We  conclude from Proposition \ref{p4.2}
that
$$ \lim_{x \to 0} M_u(x)  > M_{u_0}.$$

\medskip

\noindent Hence, from Proposition \ref{p4.3},   the blowup limit
of $u$ at the origin cannot be $u_0$. On the other hand, from the
symmetry of the boundary data for $u$ we conclude that the
function $v-v(0)$ has exactly two disconnected components where it
is positive (or negative). Thus the blowup limit at the origin for
$u$ has period $\pi$ on the unit circle which contradicts
Proposition \ref{p4.1}.

\qed

\section{Proof of Theorem \ref{t1}}{\label{sec2}}

In this final section we will present the last steps of the proof of Theorem \ref{t1}.
We distinguish the two different cases of behavior at the origin,
\eqref{rad} and \eqref{nonrad}.

\medskip

\noindent{\em Case 1: Radial Behavior.} We will show that  solutions
of \eqref{eq}  with  the radial behavior
(\ref{rad}) are $C^{2, \frac \alpha 2}$.

We begin by observing that solutions of \eqref{eq} satisfy, in $B_1
\setminus B_{1/2}$, the estimate

\begin{equation}\label{eqn-c11}
\|D^2u\|_{C^{0,1}(B_1 \setminus B_{1/2})}\le C(\alpha)
\end{equation}
provided that
\begin{equation}\label{eqn-c12}
c(\alpha) \, |x|^{2+ \frac \alpha 2} \leq u(x)  \leq C(\alpha)\,  |x|^{2+ \frac \alpha 2}.
\end{equation}

For any $r >0$, the rescaled functions
\begin{equation}{\label{2.01}}
u^r(x):=r^{-2- \frac \alpha 2}\, u(rx)
\end{equation}
solve the equation \eqref{eq}. Since $u$ has the radial behavior \eqref{rad} at the origin,
each function $u^r$ satisfies \eqref{eqn-c12}.
Hence, applying \eqref{eqn-c11} to $u^r$,  we obtain for
$x,y \in B_1 \setminus B_{1/2}$ the estimates

$$|D^2u(rx)-D^2u(ry)|\le r^{\frac \alpha 2}|x-y|, \qquad |D^2u(rx)|\le
C\, r^{\frac \alpha  2}.$$
The above estimates, readily imply that  $u \in C^{2, \frac \alpha 2}$.

\medskip

\noindent{\em Case 2: Non-radial Behavior.} In the rest of the
section we will  show  that solutions of \eqref{eq} which  satisfy
the nonradial behavior (\ref{nonrad}) are also of class
$C^{2,\delta}$, for some $\delta >0$. The idea is simple: we
approximate $u$ with quadratic polynomials in the $x_2$ direction.
However, the proof is quite technical.

In order to simplify the constants, we assume that $u$ solves the equation

\begin{equation}{\label{eq1}}
\det D^2u =2(2+\alpha)(1+\alpha)\, |x|^\alpha
\end{equation}
instead of \eqref{eq} and (after rescaling) that

\begin{equation}{\label{exp}}
u(x)=|x_1|^{2+\alpha}+x_2^2+O
\left((|x_1|^{2+\alpha}+x_2^2)^{1+\delta}\right), \qquad \mbox{as} \, |x| \to 0.
\end{equation}

\noindent From now on, we  will  denote points in $\mathbb{R}^2$ with capital letters
$$X=(x_1,x_2).$$

\medskip

The H\"older continuity of the second order derivatives of $u$  follows easily
from the following proposition.

\begin{prop}{\label{p2.1}}
Let $\lambda>0$ be small and
$$Y \in \Omega_\lambda:= \{ \lambda \le |x_1|^{2+\alpha} + x_2^2 \le 2
\lambda \}.$$
Then, there exist $C$, $\mu$ universal constants such that in
$B:=B(Y,\lambda^{1+\alpha})$, we have

$$\|D^2u\|_{C^\mu(B)} \le C \quad \mbox{and} \quad  \|D^2u-D^2u(0)\|_{L^\infty(B)} \le \lambda^{\mu}.$$
\end{prop}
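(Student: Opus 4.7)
The plan is to rescale $u$ around each point $Y\in\Omega_\lambda$ by an anisotropic affine map matched to the model $|x_1|^{2+\alpha}+x_2^2$, reduce to a Monge–Amp\`ere problem with nearly constant right‑hand side on a tiny ball, and then invoke Caffarelli's interior $C^{2,\mu}$ estimates. The key geometric observation is that $B:=B(Y,\lambda^{1+\alpha})$ is so small relative to the natural scales $\lambda^{1/(2+\alpha)}$ in $x_1$ and $\lambda^{1/2}$ in $x_2$ that the density $|x|^\alpha$ varies only by a factor $1+O(\lambda^{\alpha''})$ on $B$: if $Y=(y_1,y_2)$ satisfies $|y_1|\gtrsim\lambda^{1/(2+\alpha)}$, then $|x|\simeq|y_1|$ on $B$ since $\lambda^{1+\alpha}\ll|y_1|$; otherwise $|y_2|\gtrsim\lambda^{1/2}$ and $|x|\simeq|y_2|$ on $B$ since $\lambda^{1+\alpha}\ll\lambda^{1/2}$. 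In either regime, the equation on $B$ reads $\det D^2u=|Y|^\alpha\bigl(1+O(\lambda^{\alpha''})\bigr)$, i.e.\ a nearly constant‑coefficient Monge–Amp\`ere problem.

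To apply Caffarelli's $C^{2,\mu}$ estimate one must also verify that a suitable section of $u$ around $Y$ is balanced. For this, set $A_\lambda:=\mathrm{diag}(\lambda^{1/(2+\alpha)},\lambda^{1/2})$ and
\[
\tilde u(Z):=\lambda^{-1}\bigl[u(Y+A_\lambda Z)-u(Y)-\nabla u(Y)\cdot A_\lambda Z\bigr],
\]
so that $\tilde u(0)=0$, $\nabla\tilde u(0)=0$, and the expansion \eqref{exp} rewrites as $\tilde u(Z)=p_{\hat Y}(Z)+O(\lambda^\delta)$ on bounded $Z$‑sets, where $\hat Y:=A_\lambda^{-1}Y$ lies in the normalized shell $\{1\le|\hat y_1|^{2+\alpha}+\hat y_2^2\le 2\}$ and $p_{\hat Y}$ is the convex polynomial obtained from $|\hat y_1+z_1|^{2+\alpha}+(\hat y_2+z_2)^2$ by subtracting its constant and linear parts at $Z=0$. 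A barrier argument exactly as in Lemma \ref{l1}, comparing $\tilde u$ with $p_{\hat Y}\pm\sqrt{c\lambda^\delta}|Z|^2$, shows that the section $\{\tilde u<\tau\}$ for a fixed small universal $\tau$ is, after a bounded affine change of coordinates, comparable to a ball. Combined with the fact that $\tilde f_\lambda(Z):=\lambda^{-\alpha/(2+\alpha)}|Y+A_\lambda Z|^\alpha$ is Hölder and bounded between positive universal constants on the $Z$‑image of $B$ (a set of diameter at most $\lambda^{1/2+\alpha}$, by the analysis of the previous paragraph), Caffarelli's interior $C^{2,\mu}$ estimate \cite{Ca1} yields $\|D^2\tilde u\|_{C^\mu}\le C$ universally.

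Scaling back through $D^2u(x)=\lambda A_\lambda^{-T}D^2\tilde u(A_\lambda^{-1}(x-Y))A_\lambda^{-1}$ reads entry‑wise as
\[
u_{11}=\lambda^{\alpha/(2+\alpha)}\tilde u_{11},\quad u_{12}=\lambda^{\alpha/(2(2+\alpha))}\tilde u_{12},\quad u_{22}=\tilde u_{22},
\]
so the anisotropic Lipschitz cost of $x\mapsto Z$ is absorbed by the favorable prefactors, delivering $\|D^2u\|_{C^\mu(B)}\le C$ for a (possibly smaller) universal $\mu$. For the second conclusion, the $C^0$‑proximity $\tilde u=p_{\hat Y}+O(\lambda^\delta)$ combined with the uniform $C^{2,\mu}$ bound gives, by Schauder interpolation, $\|D^2\tilde u-D^2p_{\hat Y}\|_{L^\infty}=O(\lambda^{\mu'})$; undoing the rescaling and noting that $D^2p_{\hat Y}$ matches $D^2u(0)=\mathrm{diag}(0,2)$ up to the already small $\lambda^{\alpha/(2+\alpha)}$, one concludes $\|D^2u-D^2u(0)\|_{L^\infty(B)}\le\lambda^\mu$.

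The hardest step is to verify that, in the regime where $|y_1|$ is small and $|y_2|\gtrsim\lambda^{1/2}$, the rescaled density $\tilde f_\lambda$ — which may well be degenerate at the scale of a full $1$‑section in $Z$‑space — is nevertheless controlled uniformly on the tiny $Z$‑image of $B$. The specific choice of radius $\lambda^{1+\alpha}$ is calibrated precisely so that this holds in both regimes, but tracking the correct $\mu$‑exponent through the anisotropic scaling and handling both regimes uniformly constitutes the main technical bookkeeping.
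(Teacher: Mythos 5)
The decisive step in your argument --- invoking Caffarelli's interior $C^{2,\mu}$ estimate because $|x|^\alpha$ is nearly constant on the tiny ball $B(Y,\lambda^{1+\alpha})$ --- does not go through in the near-axis regime, and that is exactly the hard case. Caffarelli's estimate is not a local elliptic estimate that can be run on an arbitrarily small ball where the right-hand side happens to be almost constant: to obtain a \emph{universal} $C^{2,\mu}$ bound at $Y$ one needs a full normalized section around $Y$ on which the Monge--Amp\`ere measure is uniformly bounded between positive constants and close to a constant. With your fixed rescaling $A_\lambda=\mathrm{diag}(\lambda^{1/(2+\alpha)},\lambda^{1/2})$ and section height $\tau\lambda$, the section $\{\tilde u<\tau\}$ has $z_1$-extent of order one, and for $|y_1|\ll\lambda^{1/(2+\alpha)}$ (e.g. $Y=(0,y_2)$ with $|y_2|\sim\lambda^{1/2}$, which does occur in $\Omega_\lambda$) the rescaled density is $\tilde f_\lambda(Z)=\lambda^{-\alpha/(2+\alpha)}|Y+A_\lambda Z|^\alpha\approx |z_1|^\alpha$ except on a tiny neighborhood of $\{z_1=0\}$, and $\approx\lambda^{\alpha^2/(2(2+\alpha))}\to 0$ at $Z=0$. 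So on the section you actually use, the normalized equation is still the degenerate equation $\det D^2\tilde u\approx c\,|z_1|^\alpha$, with the point of interest sitting on the degeneracy line; it is \emph{not} bounded between universal positive constants there, no uniform interior $C^{2,\mu}$ theory applies, and a universal bound at such points is essentially equivalent to the statement being proved.

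What is missing is the correct choice of section and normalization near the $x_2$-axis. The paper takes the much smaller section $S_{Y,\sigma}$ with $\sigma\sim\lambda^{(2+\alpha)/2}$ (not $\tau\lambda$), every point of which lies at distance comparable to $\lambda^{1/2}$ from the origin, so that on it the density $|x|^\alpha$ is genuinely nondegenerate and the classical estimates apply. But to know that this small section is balanced, with a shape matrix $F$ (roughly $\mathrm{diag}(\lambda^{1/2},\lambda^{(2+\alpha)/4})$, very different from your $A_\lambda$) whose entries are controlled, one first needs the quantitative approximation of $u$ by quadratic polynomials of opening $2$ in the $x_2$-variable at the intermediate scale $e=\lambda^{(2+\alpha)/4}$ --- the $Q$-class statements of Lemmas \ref{l2.1}--\ref{l2.2}, obtained by iterating Lemma \ref{l1} down to $t=\lambda^{\alpha/2+1-\delta_1}$ and then improving the approximation of the right-hand side from $|x_1|^\alpha$ to $(x_1^2+px_1+q)^{\alpha/2}$. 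Your proposal has no substitute for this input. In the genuinely nondegenerate regime $|y_1|\gtrsim\lambda^{1/(2+\alpha)}$ your argument is essentially sound and parallels Lemma \ref{l2.3}; your final paragraph correctly locates the difficulty in the other regime but misdiagnoses it as bookkeeping, when in fact it is the core of the proof.
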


\medskip

We will show that in the sections

$$S_{{X_0},t}:=\{\, X: \, u(X) < u(X_0) + \nabla u(X_0) \cdot (X-X_0) +t \}.$$

\noindent of $u$ at the point
$$X_0=(0,x_0), \quad |x_0|\le 2 \lambda^{1/2}$$
we can approximate $u$ by quadratic polynomials of opening
$2$ on vertical segments. We begin by making  the following definition.

\begin{defn} We say that
$$u \in Q(e,\eps, \Omega)$$
if for any vertical segment $l \subset \Omega$ of length less than $e$,
there exists a quadratic polynomial $P_{x_1,l}(x_2)$ of opening $2$, namely

$$P_{x_1,l}(x_2)=x_2^2+p(x_1,l)\, x_2+r(x_1,l)$$
such that

$$\left |u(x_1,x_2)-P_{x_1,l}(x_2) \right| \le \eps e^{2} \quad \mbox {on $l$}.$$
\end{defn}

\noindent Notice that for  $c<1$ we  have
$$Q(e,\eps, \Omega) \subset Q(ce,c^{-2}\eps, \Omega).$$

\medskip

{\em The plan of the proof is as follows:} We prove Proposition \ref{p2.1} for points $Y \in S_{X_0,t}$, with $t
\le \lambda$. We first show that $u$ belongs to some appropriate
$Q$ classes   and distinguish two cases;  one when $t \ge
\lambda^{\frac \alpha 2 +1 -\delta_1}$ for some fixed
$\delta_1>0$, and the other when $t=\lambda^{\frac \alpha 2 +1
-\delta_1}$. In the first case we use the same method as in Lemma
\ref{l1} and approximate the right hand side $|f(X)|^{\alpha /2}$
of the rescaled Monge-Amp\'ere equation with $|x_1|^\alpha$ (see
Lemma \ref{l2.1}). In the second case we approximate $f(X)$ with a
more general polynomial $x_1^2 + p x_1 + q$ and obtain a better
approximation ($Q$ class) for $u$ (Lemma \ref{l2.2}).

The H\"older estimates for points $Y \in S_{X_0,t}$, $|x_0|\ge
\lambda^{1/2}$ are obtained in appropriate sections $S_{Y,\sigma}$
in which all the values of $|x|$ are comparable. In these sections
the Monge-Amp\'ere equation is nondegenerate and the classical
estimates apply. To obtain the appropriate  section  $S_{Y,\sigma}$ we
distinguish two cases,  depending on the distance from $Y$ to the
$x_2$ axis. If $|y_1|\ge \lambda^{1/2}$,  then we take $\sigma$ so
that $S_{Y,\sigma}$ is at distance greater than $|y_1|/2$ from the
$x_2$ axis (Lemma \ref{l2.3}). If $|y_1|\le \lambda^{1/2}$, then we
take $\sigma=\lambda^{\frac{2+\alpha}{2}}$ and $S_{Y,\sigma}$ is
close enough to the $x_2$ axis so that all its points are at
distance comparable to $\lambda^{1/2}$ from the origin (Lemma
\ref{l2.4}).

\smallskip

In what follows we will denote by $A_t$, $D_t$ the matrices

$$A_t=\begin{pmatrix}
  a_{11} & 0 \\
  a_{21} & a_{22}
\end{pmatrix} , \qquad  D_{t}=\begin{pmatrix}
  t^\frac{1}{2+\alpha} & 0 \\
  0 & t^{\frac{1}{2}}
\end{pmatrix}.$$

\begin{lem}{\label{l2.1}} Let $X_0=(0,x_0)$ with $ |x_0|\le 2 \lambda^{1/2}$, $0 < \lambda <1$.
Then, for any $\delta_1>0$ and
\begin{equation}{\label{2.1}}
\lambda^{\frac{\alpha}{2}+1-\delta_1} \le t \le \lambda
\end{equation}
there exists a small $\delta_2>0$, depending on $\delta_1$, such that
\begin{equation}{\label{2.2}}
S_{X_0, t}-X_0 \in A_t D_t (\Gamma \pm t^{\delta_2})
\end{equation}
with
\begin{equation}\label{eqn-At}
|A_t-I|\le t^{\delta_2}.
\end{equation}
Moreover,
$$u\in Q(t^{\frac 12}, \lambda^{\delta_2}, S_{X_0, t}).$$
\end{lem}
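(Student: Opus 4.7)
The plan is to mimic the iterative scheme from Proposition \ref{p1}, centered now at the shifted base point $X_0=(0,x_0)$, and to run the induction \emph{downward} in $t$ starting at $t=\lambda$. At the initial scale, expanding (\ref{exp}) around $X_0$ and absorbing the constant and $Y$-linear parts into $u(X_0)+\nabla u(X_0)\cdot Y$ gives
\[
u(X_0+Y)-u(X_0)-\nabla u(X_0)\cdot Y = |y_1|^{2+\alpha}+y_2^2+R(Y),
\]
with $|R(Y)|\le C(|x_1|^{2+\alpha}+x_2^2)^{1+\delta}$. Because $|x_0|\le 2\lambda^{1/2}$, every $X\in S_{X_0,\lambda}$ satisfies $|x_1|^{2+\alpha}+x_2^2\lesssim \lambda$, so $|R|\lesssim \lambda^{1+\delta}$ and hence $S_{X_0,\lambda}-X_0\subset D_\lambda(\Gamma\pm C\lambda^{\delta})$; this gives (\ref{2.2})--(\ref{eqn-At}) at $t=\lambda$ with $A_\lambda=I+O(\lambda^{\delta})$.

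For the induction step, assume the conclusion at scale $t$ and rescale by setting
\[
v(y):=\tfrac{1}{t}\bigl(u(X_0+A_tD_ty)-u(X_0)-\nabla u(X_0)\cdot A_tD_ty\bigr),
\]
so that $\{v<1\}\in \Gamma\pm t^{\delta_2}$ and $\det D^2 v=c_t|X_0+A_tD_ty|^{\alpha}$. The key check is to write the right-hand side as $c_t'f_t(y)$ with
\[
|f_t(y)-|y_1|^{\alpha}|\le C\bigl(x_0^2\,t^{-2/(2+\alpha)}+t^{\alpha/(2+\alpha)}\bigr).
\]
Since $x_0^2\le 4\lambda$ and $t\ge \lambda^{1+\alpha/2-\delta_1}$, one has $\lambda\,t^{-2/(2+\alpha)}\le \lambda^{2\delta_1/(2+\alpha)}$, so the perturbation is polynomially small in $\lambda$ uniformly in the allowed range of $t$. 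The hypotheses of Lemma \ref{l1} are therefore met with $\theta=t^{\delta_2}$ and an even smaller $\varepsilon$, and its conclusion yields $A_{tt_0}=A_t\tilde A$ with $|\tilde A-I|\le Ct^{\delta_2}$ and the refined inclusion (\ref{2.2}) at scale $tt_0$. A telescoping argument as in Proposition \ref{p1} then propagates $|A_t-I|\le t^{\delta_2}$ through every step, provided $\delta_2$ is chosen a sufficiently small multiple of $\delta_1/(2+\alpha)$.

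The $Q$-class bound is obtained from the same iteration: for a vertical segment $l=\{\bar x_1\}\times[\bar x_2-e/2,\bar x_2+e/2]\subset S_{X_0,t}$ with $e\le t^{1/2}$, the partial Legendre argument internal to Lemma \ref{l1} (see (\ref{1.7})) shows that after iterating down to scale $t$ the function $u$ is approximated on $S_{X_0,t}$ by $|x_1|^{2+\alpha}+x_2^2+\mathrm{affine}$ with pointwise error $\lambda^{\delta_2}t$; restricting to $l$ and absorbing the constant $|\bar x_1|^{2+\alpha}$ into the constant term of the polynomial $P_{\bar x_1,l}(x_2)=x_2^2+p\,x_2+r$ then yields $|u-P_{\bar x_1,l}|\le \lambda^{\delta_2}e^2$ on $l$. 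The main obstacle is the induction step: $\delta_2$ must be chosen so that simultaneously the perturbation $\varepsilon(t)$ stays below the threshold $\theta^8=t^{8\delta_2}$ demanded by Lemma \ref{l1} throughout the allowed range, and the cumulative drift of $A_t$ away from $I$ remains inside $t^{\delta_2}$. This coupled quantitative balance between $\alpha$, $\delta$ and $\delta_1$ is precisely what forces the lower cutoff $t\ge \lambda^{1+\alpha/2-\delta_1}$ in the hypothesis.
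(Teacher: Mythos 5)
Your proposal is correct and follows essentially the same route as the paper: a downward induction in $t$ from the initial scale $t=\lambda$ (seeded by the expansion \eqref{exp}), applying Lemma \ref{l1} to the affine rescaling $v$ at each step, with the decisive point being that the off-center contribution $|x_0|\,t^{-1/(2+\alpha)}\lesssim \lambda^{1/2}t^{-1/(2+\alpha)}$ to the right-hand side stays polynomially small precisely because $t\ge\lambda^{\alpha/2+1-\delta_1}$, and then reading off the $Q$-class membership from the uniform bound $|v-(|x_1|^{2+\alpha}+x_2^2)|\le Ct^{\delta_2}$ pulled back through $A_tD_t$. The minor bookkeeping you leave implicit (shrinking $\delta_2$ at the end to absorb constants in the telescoped drift of $A_t$ and in the $Q$-class error) is handled the same way in the paper.
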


\begin{proof}  We begin by observing that if $t=\lambda$,  then the
conclusion of
the lemma follows from the expansion (\ref{exp}) with matrix
$A_t=I$. We will show by induction, using at each step  the approximation lemma \ref{l1},
that (\ref{2.2}) and \eqref{eqn-At} hold for every $t=\lambda\, t_0^k$,  $k \in \mathbb N$,  which satisfies \eqref{2.1}.

Assume that  (\ref{2.2}) and \eqref{eqn-At}  hold for some $t=\lambda\, t_0^k$ satisfying (\ref{2.1}),  with
$A_t$ bounded and $a_{t, 11}$ bounded from below.
Consider the rescaling
\begin{equation}{\label{2.3}}
v(X):=\frac{1}{t} \, ( u(X_0+A_t D_t X) - u(X_0) - \nabla u(X_0)
\, (A_t D_t X) \, ).
\end{equation}
Since $u$ satisfies \eqref{eq1}, the function $v$ satisfies the equation
\begin{equation}\label{eqn-v1}
\det D^2 v =2(2+\alpha)(1+\alpha) a_{11}^2\, a_{22}^2 \, t^{-\frac {\alpha}{2+\alpha}} \, |X_0+A_t D_t X|^\alpha.
\end{equation}
Since
\begin{equation}\label{eqn-f1}
|X_0+A_t D_t X|^2 = (t^{\frac 1{2+\alpha}}\, a_{11} x_1)^2 + (t^{\frac 1{2+\alpha}} a_{12} x_1 +  t^{\frac 12} a_{22} x_2 + x_0)^2
\end{equation}

\medskip
\noindent
and $|x_0| \leq 2\, \lambda^{1/2}$, we conclude from the above that $v$ satisfies

\begin{equation}{\label{2.4}}
\det D^2 v = c \, |f(X)|^{\frac{\alpha}{2}}, \qquad S^v_{0,1} \in \Gamma \pm
t^{\delta_2}
\end{equation}
with
$$|f(X)-x_1^2| \le C \,
\left(\lambda^\frac{1}{2}t^{-\frac{1}{2+\alpha}}+
t^\frac{\alpha}{2(2+\alpha)}\right)
\le t^{\frac{\delta_1}{2(2+\alpha)}}.$$
Notice that the last inequality holds if \eqref{2.1} is satisfied.

Lemma \ref{l1} with $\eps=t^{\delta'}$,
$\delta'(\delta_1,\alpha)>0$ small, yields

$$S^u_{X_0, t_0 t}-X_0 \in A_{t_0 t} D_{t_0 t} (\Gamma \pm (t_0
t)^{\delta_2})$$
with
$$A_{t_0 t}=A_t E_{t}, \quad |E_t-I| \le
C\, t^{\delta_2}.$$ Thus, (\ref{2.2}) and \eqref{eqn-At} hold for $t'=t\, t_0$.
If $t' \leq \lambda^{\frac \alpha 2+ 1 - \delta_1}$ we stop, otherwise we continue the induction.

From (\ref{2.4}) we find that
\begin{equation}{\label{vbd}}
\left|v-(|x_1|^{2+\alpha}+x_2^2)\right|\le C t^{\delta_2} \qquad \mbox{in
$S^v_{0,1}$}
\end{equation}
which together with (\ref{2.3}) and  \eqref{eqn-At}, yields to
$$u \in Q(t^{\frac 12},C \, \lambda^{\delta_2}, S^u_{X_0,t}).$$

\noindent The lemma is proved by replacing $\delta_2$ with $\delta_2 /2$.
\end{proof}

\medskip

We will next examine closer the borderline  case $t=\lambda^{\frac{\alpha}{2}+1-\delta_1}$
and show the  better approximation \eqref{eqn-better} of $u$ by  quadratic  polynomials in the
$x_2$ variable. We begin by observing that the conclusion of the previous lemma implies
that
$$S_{X_0, t}-X_0 \in A_t D_t (\Gamma \pm
\lambda^{\delta_2}),  \qquad |A_t-I|\le \lambda^{\delta_2}
$$
for all $
\lambda^{\frac{\alpha}{2}+1-\delta_1}\leq t \leq \lambda$.

\medskip
\begin{lem}{\label{l2.2}}
Assume that for $t=\lambda^{\frac{\alpha}{2}+1-\delta_1}$ and
$\delta_2 \ll \delta_1$, we have

\begin{equation}{\label{2.5}}
S_{X_0, t}-X_0 \in A_t D_t (\Gamma \pm
\lambda^{\delta_2}), \quad \quad |A_t-I|\le \lambda^{\delta_2}.
\end{equation}
Then if $\delta_1$ is small, universal, we have

\begin{equation}\label{eqn-better}
u \in Q(e, C\, \lambda^{\delta_2}, S_{X_0, \frac t2}), \quad \quad
\mbox{for all $e$ with $\lambda^\frac{2+\alpha}{4} \le e \le
t^{1/2}$}.
\end{equation}

\end{lem}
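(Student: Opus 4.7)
The plan is to rescale $u$ as in Lemma~\ref{l2.1} and then upgrade the $L^\infty$ closeness of the rescaled function $v$ to the model $w_0(X):=|x_1|^{2+\alpha}+x_2^2$ into a pointwise $C^2$ closeness in the interior. The $Q$-class will then follow from a Taylor expansion in~$x_2$.

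Set $t=\lambda^{(\alpha+2)/2-\delta_1}$, $s=t^{1/2}$, $\tau=t^{1/(2+\alpha)}$, and define $v$ by \eqref{2.3}. From \eqref{2.4} and \eqref{vbd},
\[
\det D^2 v = c\,|f(X)|^{\alpha/2} \quad\text{in}\quad S^v_{0,1}\in\Gamma\pm\lambda^{\delta_2},\qquad \|v-w_0\|_{L^\infty(S^v_{0,1})}\le C\,t^{\delta_2}.
\]
A direct computation from \eqref{2.3} shows that a vertical segment in $S_{X_0,t/2}$ of length $e$ pulls back to a vertical segment in $S^v_{0,1/2}$ of length $\tilde e = e/(a_{22}s)$, and a quadratic polynomial in $y_2$ of opening $2$ pulls back to a quadratic polynomial in $x_2$ of opening $2a_{22}^2=2+O(\lambda^{\delta_2})$. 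The opening discrepancy contributes only $O(\lambda^{\delta_2}\tilde e^2)$ to the error, so it suffices to prove $v\in Q(\tilde e, C\lambda^{\delta_2}, S^v_{0,1/2})$ for all $\tilde e\in [c\lambda^{\delta_1/2}, C]$.

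The whole argument then reduces to the pointwise bound
\[
\|v_{22}-2\|_{L^\infty(S^v_{0,1/2})}\le C\,\lambda^{\delta_3}
\]
for some $\delta_3>0$. Granted this, expanding $v$ in $x_2$ about the midpoint of any vertical segment $\tilde l$ of length $\tilde e\le C$ in $S^v_{0,1/2}$ produces a quadratic polynomial of opening $2$ whose approximation error is at most $\tfrac12\|v_{22}-2\|_{L^\infty}\tilde e^2+O(\tilde e^3)$; the cubic tail is absorbed since $\tilde e\le C$, and since $\delta_2\ll\delta_3$ the leading term is at most $C\lambda^{\delta_2}\tilde e^2$, as required.

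To obtain the $C^2$ closeness, the plan is to combine the $L^\infty$ bound with a uniform interior $C^{2,\mu}$ estimate on $v$ in $S^v_{0,3/4}$ and then interpolate. Caffarelli's regularity theory yields the $C^{2,\mu}$ estimate once $c|f|^{\alpha/2}$ is verified to be uniformly H\"older and bounded below in $S^v_{0,3/4}$. Since $f(X)=\tau^2 a_{11}^2 x_1^2+(\tau a_{21}x_1+s a_{22}x_2+x_0)^2$ is a nonnegative quadratic, its zero set is a single line; when $|x_0|$ dominates $s$ this line lies outside $S^v_{0,3/4}$ and the estimate is straightforward. The main obstacle is the degenerate sub-case in which the zero line crosses $S^v_{0,3/4}$: to handle it, one recognizes the equation for $v$ near the zero line as a linear rescaling of the original equation $\det D^2 u=|X|^\alpha$ on a region bounded away from the origin, where the smoothness of $u$ away from $0$ provides scale-invariant $C^{2,\mu}$ bounds. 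Interpolation with $\|v-w_0\|_{L^\infty}\le C t^{\delta_2}$ then delivers the desired estimate on $v_{22}-2$, and hence the lemma.
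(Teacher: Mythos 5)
Your reduction to the rescaled function $v$ and the bookkeeping of how vertical segments and openings transform are fine, but the core of your argument --- the pointwise bound $\|v_{22}-2\|_{L^\infty(S^v_{0,1/2})}\le C\lambda^{\delta_3}$ obtained from a uniform interior $C^{2,\mu}$ estimate plus interpolation --- has a genuine gap. Caffarelli's interior $C^{2,\mu}$ theory requires the Monge--Amp\`ere measure to be bounded between positive constants, and here $\det D^2 v = c\,f(X)^{\alpha/2}$ with $f\lesssim \lambda^{2\delta_1/(2+\alpha)}+t^{\alpha/(2+\alpha)}$ on the strip $\{|x_1|\ll 1\}$, which crosses the middle of $S^v_{0,1/2}$. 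Your fallback for this degenerate strip --- invoking ``the smoothness of $u$ away from the origin'' after undoing the rescaling --- is circular: in $u$-coordinates that strip is $\{|x_1|\le \eta\, t^{1/(2+\alpha)}\}$ near $X_0=(0,x_0)$ with $|x_0|\le 2\lambda^{1/2}$, and it contains points at distance $\ll|x_0|$ from the origin (when $x_0=0$, the origin itself), exactly where no a priori second-derivative bounds are available --- establishing them is the content of Theorem \ref{t1}. A further warning sign: a pointwise bound on $v_{22}-2$ in all of $S^v_{0,1/2}$ would put $u$ in the $Q$ class for \emph{every} $e$, whereas the lemma (correctly) only claims it for $e\ge\lambda^{(2+\alpha)/4}$; the lower cutoff on $e$ exists precisely because no such pointwise control is obtained.

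The paper's route avoids this entirely and you should compare it with yours. The point of the borderline case is that the right-hand side can be approximated by the \emph{one-variable} polynomial $(x_1^2+px_1+q)^{\alpha/2}$, with $|p|,|q|\le\lambda^{\delta_1/(2+\alpha)}$, up to an error $\eps=t^{\delta_0(\alpha)}$ that is a positive power of $t$ (hence far smaller than $\lambda^{\delta_2}$). One then solves the auxiliary Dirichlet problem $\det D^2w=c\,(x_1^2+px_1+q)^{\alpha/2}$ in $S^v_{0,1}$ with $w=v$ on the boundary; the maximum principle gives $|v-w|\le C\eps^{1/2}$, and --- because the right-hand side for $w$ depends on $x_1$ only --- the partial Legendre transform together with Lemma \ref{lineq} yields the interior bound $|w_{22}-2|\le C\lambda^{\delta_2}$. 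Thus $w\in Q(e,C\lambda^{\delta_2},\cdot)$ for all $e$, and $v$ inherits membership only for $e\gtrsim\eps^{1/4}$ because the transfer from $w$ to $v$ is purely in $L^\infty$; this is exactly where the threshold $\lambda^{(2+\alpha)/4}$ comes from. If you want to salvage your approach, you must replace the claimed $C^{2,\mu}$ estimate for $v$ near the degenerate strip by some analogue of this auxiliary-solution/partial-Legendre device.
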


\begin{proof} Let $v$ be the re-scaling  defined in (\ref{2.3}). It follows from \eqref{eqn-v1},
\eqref{eqn-f1} and \eqref{2.5} that $v$ satisfies

$$\det D^2 v = c \, f(X)^\frac{\alpha}{2}, \quad \quad S^v_{0,1} \in \Gamma
\pm \lambda^{\delta_2}$$
with
$$|f(X)-x_1^2-p\, x_1-q| \le t^\frac{\alpha}{2(2+\alpha)}, \quad \quad
|p|,|q|\le \lambda^\frac{\delta_1}{2+\alpha}$$
thus
$$\left |f(X)^\frac{\alpha}{2}-(x_1^2+px_1+q)^\frac{\alpha}{2} \right|\le
\eps:=t^{\delta_0(\alpha)}, \quad \quad
\delta_0(\alpha)=\frac{\alpha \min
\{\alpha,2\}}{4(2+\alpha)}.$$

\noindent Similarly as  in the proof of Lemma \ref{l1} we define
the function $w$ as the solution to

$$\det D^2 w=c\, (x_1^2+px_1+q)^\frac{\alpha}{2}, \quad \mbox{$w=1$ on
$\partial S^v_{0,1}$}$$

\noindent and obtain (see (\ref{vbd})) that

$$|v- w| \le C \eps ^\frac{1}{2}=C t^\frac{\delta_0(\alpha)}{2}$$
and
$$\left |w-(|x_1|^{2+\alpha}+x_2^2) \right | \le C \lambda^{\delta_2}.$$

By considering the partial Legendre transform $w^*$, one can deduce
from the last inequality, the bounds on $|p|$, $|q|$ and Lemma \ref{lineq} that

$$|w_{22}-2| \le C \lambda^{\delta_2} \quad \mbox{in $S^v_{0, 1/2}$}.$$
This implies that

$$w \in Q(e,C \lambda^{\delta_2}, S^v_{0, 1/2} ),  \quad \quad \mbox{for any
$e$}$$
hence

$$v \in  Q(e,C \lambda^{\delta_2}, S^v_{0, 1/2}),   \quad \quad \mbox{for $e \ge
t^\frac{\delta_0(\alpha)}{8}$}.$$

\medskip
\noindent Then,  similarly  as at the end of the proof of  the previous lemma, we obtain that

$$u \in Q( t^{1/2} e, C \lambda^{\delta_2}, S^u_{0,t/2}),
\quad \quad \mbox{for $e \ge
t^\frac{\delta_0(\alpha)}{8}$}$$

\medskip
\noindent from which the lemma follows,  since

$$t^\frac{1}{2}t^\frac{\delta_0(\alpha)}{8} \le
\lambda^\frac{2+\alpha}{4}$$ for
$\delta_1$ small, universal (depending only on $\alpha$).
\end{proof}

The   next lemma   proves Proposition \ref{p2.1} for a point $Y \in
S_{X_0,\lambda}$ at distance greater than $\lambda^{1/2}$ from the $x_2$
axis, assuming  the conclusions of lemmas \ref{l2.1} and \ref{l2.2}.

\begin{lem}{\label{l2.3}}
Assume that for $\lambda^{\frac{\alpha}{2}+1-\delta_1} \le t \le \lambda$, we have

\begin{equation}{\label{2.6}}
S_{X_0, t}-X_0 \in A_t D_t (\Gamma \pm
\lambda^{\delta_2}), \quad \quad |A_t-I|\le \lambda^{\delta_2}
\end{equation}
and

$$u \in Q(e,C\, \lambda^{\delta_2}, S_{X_0, \frac t2}) \quad \quad
\mbox{for some $e$,} \quad \lambda^\frac{2+\alpha}{4} \le e \le
t^\frac{1}{2}.$$
If
\medskip
$$Y=(y_1,y_2) \in S_{X_0, \frac t3}, \quad \quad 1 \le |y_1|
e^{-\frac{2}{2+\alpha}}
\le 2$$

\medskip

\noindent then $D^2u$ is H\"older continuous in the ball $B:=B(Y, \lambda^{1+\alpha})$, and for
some constant $0 < \beta < 1$, it satisfies
\begin{equation}\label{eqn-w2d}
\|D^2 u\|_{C^{0,\beta}(B)} \le C \quad \quad \mbox{and} \qquad
|D^2u(Y)-D^2u(0)| \le C\,  \lambda^{\beta}.
\end{equation}
\end{lem}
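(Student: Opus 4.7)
The plan is to rescale $u$ around $X_0$ by the affine map $A_tD_t$ so that $S_{X_0,t/2}$ is normalized, then perform a second rescaling centered at the image of $Y$ to make the Monge--Amp\`ere equation uniformly elliptic with H\"older right-hand side, and finally apply Caffarelli's interior $C^{2,\beta}$ theory. Concretely, as in Lemmas \ref{l2.1} and \ref{l2.2} I set
$$
v(X):=t^{-1}\bigl(u(X_0+A_tD_tX)-u(X_0)-\nabla u(X_0)\cdot A_tD_tX\bigr),
$$
so $v$ solves $\det D^2 v=c|f(X)|^{\alpha/2}$ with $|f-x_1^2-p\,x_1-q|\le t^{\delta_0}$, $|p|,|q|\le\lambda^{\delta_1/(2+\alpha)}$, and $S^v_{0,1}\in\Gamma\pm\lambda^{\delta_2}$. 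Writing $\rho:=e\,t^{-1/2}\in[\lambda^{\alpha/4},1]$ and using $|A_t-I|\le\lambda^{\delta_2}$ together with the hypothesis $1\le|y_1|e^{-2/(2+\alpha)}\le 2$, the image $\tilde Y$ of $Y$ satisfies $|\tilde y_1|$ comparable to $\rho^{2/(2+\alpha)}$, so $\tilde Y$ is bounded away from the degenerate axis $\{x_1=0\}$ and the right-hand side $|f|^{\alpha/2}$ is smooth and nondegenerate in a neighborhood of $\tilde Y$.

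Next I zoom in at $\tilde Y$ via a diagonal affine map of type $\operatorname{diag}(\rho^{2/(2+\alpha)},\rho)$ that normalizes a section $S^v_{\tilde Y,\sigma}$ with $\sigma$ a small multiple of $\rho^2$. The $Q$-approximation hypothesis on $u$ transfers, with the same constant $C\lambda^{\delta_2}$, to $v$ with vertical scale $\rho$; since the approximating polynomials have opening $2$, this forces $v_{22}(\tilde Y)=2+O(\lambda^{\delta_2})$, and combined with the equation $\det D^2v=c|f|^{\alpha/2}$ (whose value near $\tilde Y$ is comparable to $\rho^{2\alpha/(2+\alpha)}$) one obtains $v_{11}(\tilde Y)$ comparable to $\rho^{2\alpha/(2+\alpha)}$. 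Hence the diagonal rescaling converts $S^v_{\tilde Y,\sigma}$ into a set of bounded eccentricity, producing a function $\tilde v$ that solves a Monge--Amp\`ere equation with right-hand side bounded between positive universal constants and H\"older continuous. Caffarelli's interior $C^{2,\beta}$ regularity theory \cite{Ca1} then gives $\|D^2\tilde v\|_{C^{0,\beta}}\le C$ in the interior of the normalized unit section.

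Scaling back through the two affine maps yields $\|D^2 u\|_{C^{0,\beta}(B)}\le C$ on $B=B(Y,\lambda^{1+\alpha})$; the radius $\lambda^{1+\alpha}$ fits well inside the interior sub-section because $(2+\alpha)/4\le 1+\alpha$ for $\alpha>0$. For the pointwise comparison, the expansion (\ref{exp}) gives $D^2u(0)=\operatorname{diag}(0,2)$; the $Q$-approximation with opening $2$ together with the just-established H\"older continuity forces $|u_{22}(Y)-2|\le C\lambda^{\beta}$; the Monge--Amp\`ere equation combined with $|Y|\le C\lambda^{1/2}$ bounds $|u_{11}(Y)|\le C|Y|^\alpha\le C\lambda^{\alpha/(2+\alpha)}$; and unwinding the two rescalings gives $|u_{12}(Y)|\le C\,e^{\alpha/(2+\alpha)}\le C\lambda^{\alpha/4}$. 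Taking $\beta$ small enough, depending only on $\alpha$, yields both conclusions of \eqref{eqn-w2d}.

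The main obstacle is verifying the bounded eccentricity of the second rescaling: one must check that the two ingredients (the $Q$-opening providing the vertical extent, and the nondegeneracy of $\det D^2 v$ near $\tilde Y$ providing the horizontal extent) truly combine to force the section $S^v_{\tilde Y,c\rho^2}$ to be comparable to $\operatorname{diag}(\rho^{2/(2+\alpha)},\rho)B_1$. Once this geometric claim is in place, the regularity bound is a direct application of Caffarelli's theory and the pointwise comparison with $D^2 u(0)$ is bookkeeping of the rescaling exponents.
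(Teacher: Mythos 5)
Your overall architecture is the same as the paper's: normalize a section of $u$ at $Y$ of height comparable to $e^2$, show that after an anisotropic affine map it has bounded eccentricity with a uniformly elliptic, nondegenerate H\"older right-hand side, apply interior $C^{2,\beta}$ estimates, and unwind. The problem is that the one step you flag as ``the main obstacle'' --- the bounded eccentricity of the normalized section, i.e.\ that $S^v_{\tilde Y,c\rho^2}$ is comparable to $\mathrm{diag}(\rho^{2/(2+\alpha)},\rho)B_1$ --- is precisely the non-trivial content of the lemma, and you do not prove it. Worse, the heuristic you offer for it is circular: you read off $v_{22}(\tilde Y)=2+O(\lambda^{\delta_2})$ from the $Q$-class and then use the equation to get $v_{11}(\tilde Y)\sim\rho^{2\alpha/(2+\alpha)}$, but pointwise second derivatives of $v$ at $\tilde Y$ are not yet known to exist or be controlled --- that control is exactly what the lemma is establishing. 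The $Q$-class only gives an $L^\infty$ approximation of $u$ by quadratics on vertical segments, which legitimately controls the \emph{vertical extent} of the section (this is how the paper uses it), but it does not by itself give a pointwise value of $u_{22}$ before regularity is proved.

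The paper closes this gap as follows. Writing $F$ for the lower-triangular normalizing matrix of $S^u_{Y,ce^2}$ with entries $a,d,b$, one gets: (i) $b\sim e$ from the $Q$-class (vertical extent); (ii) smallness of the shear $d$ from the inclusion of $D_t^{-1}A_t^{-1}F B_{1/C_0}$ inside a ball of radius $e^{2/(2+\alpha)}t^{-1/(2+\alpha)}+\lambda^{\delta_3}$, which follows from hypothesis \eqref{2.6}; and (iii) the upper bound $2a\le e^{2/(2+\alpha)}\le|y_1|$ by a contradiction argument with the Monge--Amp\`ere \emph{measure}: if $a$ were larger, then $\det D^2w\ge a^{2+\alpha}b^{-2}(x_1+y_1/a)^{\alpha}$ would carry too much mass for $B_{1/C_0}\subset S^w_{0,\nu^2}$ to hold. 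Only then does the equation force $a^2|y_1|^\alpha/b^2\sim1$, hence $a^{2+\alpha}\sim|y_1|^{2+\alpha}\sim b^2\sim e^2$, and the Pogorelov/Caffarelli interior estimates apply; the bound $|w_{22}-2|\le\lambda^{\delta_4}$ is deduced \emph{after} the $C^{0,1}$ bound on $D^2w$, not before. You would need to supply steps (ii) and (iii) to make your argument complete. (Separately, your final bookkeeping has a slip: $|Y|\sim\lambda^{1/2}$ gives $|u_{11}(Y)|\lesssim\lambda^{\alpha/2}$, not $\lambda^{\alpha/(2+\alpha)}$, and the comparison $|D^2u(Y)-D^2u(0)|\le C\lambda^\beta$ is cleanest via the identity $D^2u(Y+Fx)=b^2(F^{-1})^TD^2w(x)F^{-1}$ with $bF^{-1}=\mathrm{diag}(0,1)+O(\lambda^{\delta_3})$, rather than estimating the three entries separately; but these are minor compared to the missing eccentricity proof.)
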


\begin{proof} Consider the section $S^u_{Y,ce^2}$ for a small constant $c$.
By Theorem \ref{caf1} there  exists a matrix

$$F:=\begin{pmatrix}
  a & 0 \\
  d & b
\end{pmatrix}, \quad \quad a, b > 0$$
such that

\begin{equation}\label{eqn-F}
FB_{1/{C_0}} \subset S^u_{Y,ce^2}-Y \subset FB_{1}, \qquad
\mbox{$C_0(\alpha)>0$ universal}.
\end{equation}
Using the assumptions of the lemma and \eqref{eqn-F} we will derive bounds
on the coefficients of the matrix $F$.
Clearly,
$$\nu:=\frac{c^{1/2}e}{b}$$
satisfies the bound
\begin{equation}{\label{bbd}}
\frac{1}{2C_0} \le \frac{c^{1/2}e}{b} \le 2.
\end{equation}

\noindent Since $e \le t^\frac{1}{2}$, the corresponding section for the
rescaling $v$ (see (\ref{2.3}),   (\ref{vbd})) satisfies

$$S^v_{\tilde{Y},\frac{ce^2}t} \subset \{|x_1|^{2+\alpha}+x_2^2 \le 3/4 \}$$
or more precisely

$$S^v_{\tilde{Y},\frac{ce^2}t}-\tilde{Y} \subset
\left(
(e^2/t)^\frac{1}{2+\alpha}+\lambda^\frac{\delta_2}{2(2+\alpha)}\right
) B_1$$
thus,

$$D_t^{-1}A_t^{-1}F B_{1/C_0} \subset
\left(
e^\frac{2}{2+\alpha}t^{-\frac{1}{2+\alpha}}+\lambda^{\delta_3}
\right) B_1.$$
The last inclusion implies the estimate

\begin{equation}{\label{dbd}}
|d| \le 2C_0\,  ( e^\frac{2}{2+\alpha}t^\frac{\alpha}{2(2+\alpha)} +
\lambda^{\delta_3}t^\frac{1}{2} + a \lambda^{\delta_3}) \le
4C_0\, (e^\frac{2}{2+\alpha}+a)\, \lambda^{\delta_3}.
\end{equation}

\medskip
\noindent The rescaling

$$w(x):=\frac{1}{b^2}u(Y+Fx)$$
satisfies

\begin{equation}{\label{2.7}}
\det D^2 w = \frac{a^2}{b^2}f(x)^\frac{\alpha}{2}, \qquad B_{1/C_0} \subset
S^w_{0, \nu^2} \subset B_1
\end{equation}
with
\begin{equation}{\label{2.8}}
f(x):=(y_1+ax_1)^2+(y_2+ d x_1+ b x_2)^2
\end{equation}
and
\begin{equation}{\label{2.9}}
|w-P'_{x_1}(x_2)| \le \lambda^ {\delta_3},  \qquad \mbox{in $S^w_{0,
\nu^2}$}.
\end{equation}

\noindent We claim that if $c$ is chosen small, universal, then

\begin{equation}{\label{abd}}
2a \le e^\frac{2}{2+\alpha} \le |y_1|.
\end{equation}
Indeed, otherwise from \eqref{2.7}, we deduce that

$$\det D^2w \ge a^{2+\alpha} b^{-2}\left(x_1+\frac{y_1}{a}
\right)^\alpha $$
with

$$(2a)^{2+\alpha}b^{-2} \ge e^2 b^{-2} \ge c^{-1}  \nu^2$$
and for small $c$ we contradict $B_{1/C_0} \subset S^w_{0, \nu^2}$,  since
$\nu$ is bounded.

\medskip
From (\ref{bbd}), (\ref{dbd}), (\ref{abd}) and $|y_2| \le 4
\lambda^{1/2}$ we obtain that ${f(x)} /{y_1^2}$ is bounded away
from $0$ and $\infty$ by universal constants, and also its
derivatives are bounded by universal constants. From (\ref{2.7})
we find that

$$ c_1 \le \frac{a^2|y_1|^\alpha}{b^2} \le C_1$$
which implies that  $a^{2+\alpha}$, $|y_1|^{2+\alpha}$, $b^2$, and $e^2$ are all
comparable. Moreover, using also (\ref{2.9}), we have
\begin{equation}{\label{2.10}}
\|D^2w\|_{C^{0,1}} \le C, \quad |w_{22}-2| \le \lambda^{\delta_4} \quad
\mbox{in
$S_{0, \nu ^2}/2$}.
\end{equation}
Hence

\begin{equation}{\label{2.11}}
|w_{22}(x)-w_{22}(y)| \le C\, \lambda^{{\delta_4}/2}|x-y|^{1/2} \quad \quad \mbox{
for $x,y \in S_{0, \nu^2}/2$}.
\end{equation}

\medskip
\noindent Also, we  have

$$D^2u(Y+Fx)=b^2(F^{-1})^TD^2w(x)\, F^{-1}$$
with

$$b \, F^{-1}= \begin{pmatrix}
  b /a & 0 \\
 - d /a & 1
\end{pmatrix}
=\begin{pmatrix}
  0 & 0 \\
  0 & 1
\end{pmatrix} +O(\lambda^{\delta_3})$$
which together with (\ref{2.10}) implies the second part of the
conclusion \eqref{eqn-w2d}.
\medskip
Finally, since

$$|Fx|\ge  \frac{b|x|}2 \ge \lambda ^{1+\alpha}|x|$$
we obtain from (\ref{2.10}) and  (\ref{2.11}) the estimate

$$|D^2u(Y+Fx)-D^2u(Y+Fy)| \le C\lambda^{{\delta_4}/2}|x-y|^{1/2} \le C\,
|Fx-Fy|^{\beta}.$$
This finishes the proof of the lemma.
\end{proof}

\medskip
The next lemma  proves H\"older continuity when $Y$ is
$\lambda^{1/2}$ close to the $x_2$ axis.

\begin{lem}{\label{l2.4}}
Assume that $(\ref{2.6})$ holds for
$t=\lambda^{\frac{\alpha}{2}+1-\delta_1}$,

$$u \in Q(e,\lambda^{\delta_2}, S_{X_0, \frac t2})  \quad \quad
\mbox{for} \quad e=\lambda^\frac{2+\alpha}{4}$$
and

$$|x_0| \ge \lambda^\frac{1}{2}/2 , \quad Y\in S_{X_0,\frac t3},
\quad |y_1| \le
e^\frac{2}{2+\alpha}.$$

\noindent Then, the conclusion of Lemma $\ref{l2.3}$ still holds.
\end{lem}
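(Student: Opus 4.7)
The plan is to reduce to Caffarelli's classical interior $C^{2,\beta}$ regularity for non-degenerate Monge-Amp\`ere equations, by exploiting the fact that under the hypotheses, equation~\eqref{eq1} is effectively non-degenerate at the scale of the relevant section of $u$ at $Y$. The key observation is that $|Y|$ is comparable to $\lambda^{1/2}$: from~\eqref{2.6} with $t = \lambda^{(2+\alpha)/2-\delta_1}$ we get $|y_2-x_0| \lesssim t^{1/2} \ll \lambda^{1/2}$ (for $\alpha>0$ and $\delta_1$ small), so $|y_2|\sim \lambda^{1/2}$ by the hypothesis $|x_0|\ge \lambda^{1/2}/2$. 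Combined with $|y_1|\le e^{2/(2+\alpha)} = \lambda^{1/2}$, this gives $|Y|\sim \lambda^{1/2}$ and hence $|X|^\alpha\sim \lambda^{\alpha/2}$ uniformly on any sufficiently small section of $u$ at $Y$.

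Next I would take $\sigma := c\,\lambda^{(2+\alpha)/2}$ with $c$ universal and small, and apply Theorem~\ref{caf1} to obtain a triangular matrix $F = \begin{pmatrix} a & 0 \\ d & b \end{pmatrix}$ with $F B_{1/C_0} \subset S_{Y,\sigma}-Y \subset F B_1$. The expected Hessian $D^2 u(Y)\approx \operatorname{diag}(c\lambda^{\alpha/2},\,2)$ predicts $a\sim \lambda^{1/2}$, $b\sim \lambda^{(2+\alpha)/4}=e$, and $|d|\ll a$. Rigorously: the identity $r = \sigma\,\mu(S_{Y,\sigma})^{-1/2}$ from Theorem~\ref{caf1}, combined with $\mu(S_{Y,\sigma})\sim \lambda^{\alpha/2}|S_{Y,\sigma}|$, fixes $ab\sim \lambda^{(4+\alpha)/4}$; applying the $Q$-class hypothesis $u\in Q(e,\lambda^{\delta_2},S_{X_0,t/2})$ on vertical segments of length $\sim b$ inside $S_{Y,\sigma}$ --- which lies in $S_{X_0,t/2}$ once $c$ is small, since $\sigma/t = c\lambda^{\delta_1}$ is small --- pins the $x_2$-opening at $2$, giving $b\sim e$ and hence $a\sim \lambda^{1/2}$; and the approximation argument of Lemma~\ref{l2.2}, comparing $u$ (after rescaling) to a solution of $\det D^2 \tilde w = c(x_1^2+px_1+q)^{\alpha/2}$ via the partial Legendre transform, forces the off-diagonal $d$ to satisfy $|d|\le a\,\lambda^{\delta_3}$.

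With these bounds, the rescaled function
$$w(X) := \sigma^{-1}\bigl(u(Y+FX) - u(Y) - \nabla u(Y)\cdot(FX)\bigr)$$
satisfies
$$\det D^2 w(X) = g(X) := \sigma^{-2}(\det F)^2\,2(2+\alpha)(1+\alpha)\,|Y+FX|^\alpha$$
on $\{w<1\}$, with $B_{1/C_0}\subset\{w<1\}\subset B_1$. A brief scaling check using $ab\sim \lambda^{(4+\alpha)/4}$ shows that $g$ is bounded above and below by universal constants and H\"older in $X$; the lower bound uses that $|y_2|\sim \lambda^{1/2}$ while $b,|d|\ll \lambda^{1/2}$ (for $\alpha>0$) keeps $|Y+FX|\ge c\lambda^{1/2}$ on $B_1$. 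Caffarelli's interior $C^{2,\beta}$ estimate then gives $\|D^2 w\|_{C^{0,\beta}(B_{1/(2C_0)})}\le C$, and the $Q$-class information translates, exactly as in the derivation of the bound on $|w_{22}-2|$ in Lemma~\ref{l2.2}, to $|w_{22}-2|\le C\lambda^{\delta}$. Undoing the rescaling via $D^2u(Y+FX) = \sigma\,(F^{-1})^T D^2 w(X)\,F^{-1}$, and comparing with $D^2u(0) = \operatorname{diag}(0,2)$ read off from~\eqref{exp}, yields the two claimed estimates on $B(Y,\lambda^{1+\alpha})$; this ball lies inside $Y+FB_{1/(2C_0)}$ because the smallest semi-axis of $FB_{1/(2C_0)}$ is comparable to $b\ge c\lambda^{(2+\alpha)/4}\gg \lambda^{1+\alpha}$ (using $\alpha>0$).

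The principal obstacle will be the quantitative smallness of the off-diagonal entry $d$, in direct parallel with Lemma~\ref{l2.3}. The difference is that here, non-degeneracy of $|Y+FX|^\alpha$ comes not from $|y_1|\sim a$ as in Lemma~\ref{l2.3}, but from $|y_2|\sim \lambda^{1/2}\gg b$; so one must combine the $Q$-class quadratic approximation (controlling the shape and $x_2$-tilt of $S_{Y,\sigma}$) with the a priori inclusion~\eqref{2.6} (controlling $|y_2-x_0|$) to obtain $|d|\le a\,\lambda^{\delta_3}$, which is the final ingredient needed for the right-hand side $g$ to be strictly positive and H\"older on $B_1$.
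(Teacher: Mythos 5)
Your proof is correct and takes essentially the same route as the paper's: the paper disposes of this lemma by noting that the argument of Lemma \ref{l2.3} applies verbatim once one observes that the second term of $f$ in (\ref{2.8}) now dominates, i.e. $f/y_2^2$ is bounded above and below by universal constants because $|y_2|\ge \lambda^{1/2}/4$ while $|y_1|\le \lambda^{1/2}$ and $b,|d|\ll\lambda^{1/2}$, so that $a^{2+\alpha}$, $y_2^{2+\alpha}$, $b^2$ and $e^2$ are all comparable. Your explicit scaling computation ($ab\sim\lambda^{(4+\alpha)/4}$, $b\sim e$, hence $a\sim\lambda^{1/2}$, $|d|\le a\lambda^{\delta_3}$) is just a spelled-out version of that observation, and the remaining steps (Caffarelli's interior estimate, the $Q$-class bound on $w_{22}$, undoing the affine rescaling) coincide with the paper's.
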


\begin{proof}
The proof is very similar to that  of Lemma \ref{l2.3}. The
only difference  is that now the second term of $f$ in (\ref{2.8})
dominates the sum.

Indeed, since $ \lambda^{1/2} \ge |y_1| $ and $|y_2| \ge
\lambda^{1/2}/4$, the function
${f(x)}/ {y_2^2}$
is bounded away from $0$ and $\infty$ by
universal constants, and also its derivatives are bounded by universal
constants. Hence, $a^{2+\alpha}$, $y_2^{2+\alpha}$, $b^2$ and $e^2$ are
all comparable and the rest of the proof is the same.
\end{proof}

\medskip

\begin{proof} [Proof of Proposition \ref{p2.1}]

For $Y \in \Omega_\lambda$ we consider the section $S^u_{Y,\sigma}$ that
becomes tangent to the $x_2$ axis at $X_0=(0,x_0)$. Since $|x|^\alpha dx$ is doubling,
there exists $C_1$ universal such that
$$Y \in S^u_{X_0,t/3}, \quad |x_0| \le 2\lambda^{1/2}, \quad t:=C_1\sigma \le C_2
\lambda.$$

\noindent We distinguish the following three cases:

\begin{enumerate}[i.]

\item If $t \ge t_0:=\lambda^{\alpha/2 +1 - \delta_1}$, then  the proposition follows from Lemmas
\ref{l2.1} and \ref{l2.3} with
 $$ e=|y_1|^\frac{2+\alpha}{2} \ge c_1 t^{1/2}.$$

\item If $t \le t_0$ and $|y_1| \ge \lambda^{1/2}$,  then we
apply Lemmas \ref{l2.2} and \ref{l2.3} for $S_{X_0,t_0}$ with $e$
defined as above.

\item If $t \le t_0$ and $|y_1| \le \lambda^{1/2}$,
then we  apply Lemma \ref{l2.2} and Lemma \ref{l2.4}. We remark that the
hypothesis $|x_0| \ge \lambda^{1/2} /2$ is satisfied because $Y \in \Omega_\lambda$.
\end{enumerate}
\end{proof}


\begin{thebibliography}{AAA}

\bibitem{B}  Brenier, Y., {\em Polar factorization and monotone rearrangement of vector-valued functions,} Comm. Pure Appl. Math., {\bf
44} (1991), no. 4, 375--417.

\bibitem{Ca1}  Caffarelli, L. A., {\em  Some regularity properties of
solutions of Monge Ampere equation,} Comm. Pure Appl. Math., {\bf
44} (1991), no. 8-9, 965--969.

\bibitem{Ca2} Caffarelli, L. A., {\em  Boundary regularity of maps with convex
potentials,}  Comm. Pure Appl. Math. {\bf 45}  (1992),  no. 9,
1141--1151.



\bibitem{GT} Gilbarg, D., Trudinger, N. S.,  {\em Elliptic partial differential equations of second order,} Classics in Mathematics. Springer-Verlag, Berlin, 2001.

\bibitem{PG1} Guan, P., {\em  Regularity of a class of quasilinear degenerate elliptic equations},   Adv. Math. {\bf 132}  (1997), no. 1, 24--45.

\bibitem{G-L} Guan, P., Li, Y.Y., {\em The Weyl problem with nonnegative
Gauss curvature}, J. Diff. geom., {\bf 39} (1994) pp 331-342.

\bibitem{G-L2} Guan, P., Li, Y.Y., {\em$C^{1,1}$ estimates for solutions of a problem of Alexandrov}, Comm. Pure Appl. Math.,  {\bf L}, (1997), pp 789-811.

\bibitem{GS} Guan, P., Sawyer, E., Regularity of Subelliptic
Monge-Amp\`{e}re Equations in the Plane, preprint.

\bibitem{Gu}  Gutierrez, C. E., {\em The Monge-Ampere equation. Progress in
Nonlinear Differential Equations and their Applications}, {\bf
44},  Birkhauser Boston, Inc., Boston, MA, 2001.

\bibitem{Le} Lewy, H., {\em On the existence of a closed convex surface realizing
a given Riemannian metric},   Proceedings of the National Academy of
Sciences, USA 24 (1938), pp 104--106.


\bibitem{Lin}  Lin, C. S.,  {\em The local isometric embedding in $R^3$ of
2-dimensional Riemannian manifolds with nonnegative curvature},
J. Diff. Geom, {\bf  21}  (1985),  213-230.

\bibitem{Ni1} Nirenberg, L.,  {\em The Weyl and Minkowski problems in differential
geometry in the large}, Comm. Pure Appl. Math. {\bf 6} (1953) pp 337-394.





\bibitem{W} Weyl, H., \"Uber die Bestimmung einer geschlossen konvexen
Fl\"ache durch ihr Linienelement, {\em Vierteljahrsschrift der
naturforschenden Gessellschaft},
Z\"urich 61 (1916), 40-72.



\end{thebibliography}
\end{document}